\newcommand\org@maketitle{}
\newcommand\@authors{}
\let\org@maketitle\maketitle
\def\maketitle{%
	\let\@authors\authors
	\nxandlist{; }{ and }{; }\@authors
	\hypersetup{
		linktocpage=true,
		pdftitle={\@title},
                pdfauthor={\@authors},
                pdfsubject={\subjclassname. \@subjclass},
		pdfkeywords={\@keywords}
	}%
	\org@maketitle
}
\renewcommand{\PrintDOI}[1]{\doi{#1}}
\newcommand{\arxiv}[1]{arXiv:\href{https://arxiv.org/abs/#1}{#1}}
\numberwithin{equation}{section}
\newtheorem{theorem}{Theorem}[section]
\newtheorem{teo}[theorem]{Theorem}
\newtheorem{lem}[theorem]{Lemma}
\newtheorem{cor}[theorem]{Corollary}
\newtheorem{pro}[theorem]{Proposition}
\theoremstyle{definition}
\newtheorem{defi}[theorem]{Definition}
\theoremstyle{remark}
\newtheorem{oss}[theorem]{Remark}
\newcommand{\e}{\varepsilon}
\newcommand{\R}{\mathbb{R}}
\newcommand{\N}{\mathbb{N}}
\newcommand{\D}{\nabla}
\newcommand{\supp}{\operatorname{spt}}
\renewcommand{\div}{\operatorname{div}}
\newcommand{\dist}{\operatorname{dist}}
\DeclareMathOperator*{\esssup}{ess\,sup}
\def\XXint#1#2#3{{\setbox0=\hbox{$#1{#2#3}{\int}$}
    \vcenter{\hbox{$#2#3$}}\kern-.5\wd0}}
\DeclareRobustCommand{\rchi}{{\mathpalette\irchi\relax}}
\newcommand{\irchi}[2]{\raisebox{\depth}{$#1\chi$}}
\mathchardef\ordinarycolon\mathcode`\:
\author{A. Audrito}
\author{G. Fioravanti}
\author{S. Vita}
\address{Alessandro Audrito\newline\indent
Dipartimento di Scienze Matematiche ``Giuseppe Luigi Lagrange''
\newline\indent
Politecnico di Torino
\newline\indent
Corso Duca degli Abruzzi 24, 10129, Torino, Italy}
\email{alessandro.audrito@polito.it}
\address{Gabriele Fioravanti\newline\indent
Dipartimento di Matematica ``Giuseppe Peano''
\newline\indent
Universit\`a degli Studi di Torino
\newline\indent
Via Carlo Alberto 10, 10124, Torino, Italy}
\email{gabriele.fioravanti@unito.it}
\address{Stefano Vita\newline\indent
Dipartimento di Matematica ``Giuseppe Peano''
\newline\indent
Universit\`a degli Studi di Torino
\newline\indent
Via Carlo Alberto 10, 10124, Torino, Italy}
\email{stefano.vita@unito.it}
\title[Schauder estimates for parabolic equations with degenerate or singular weights]{Schauder estimates for parabolic equations with degenerate or singular weights}
\subjclass[2020] {
35B65, 
58J35, 
35B44, 
35B53, 
}
\keywords{Weighted parabolic equations, degenerate/singular weights, uniform regularity estimates.}
\begin{document}

\begin{abstract} 
We establish some $C^{0,\alpha}$ and $C^{1,\alpha}$ regularity estimates for a class of weighted parabolic problems in divergence form. The main novelty is that the weights may vanish or explode on a characteristic hyperplane $\Sigma$ as a power $a > -1$ of the distance to $\Sigma$. The estimates we obtain are sharp with respect to the assumptions on coefficients and data. Our methods rely on a regularization of the equation and some uniform regularity estimates combined with a Liouville theorem and an approximation argument. As a corollary of our main result, we obtain similar $C^{1,\alpha}$ estimates when the degeneracy/singularity of the weight occurs on a regular hypersurface of cylindrical type.\end{abstract}

\maketitle

\section{Introduction}
In this paper we prove some H\"older and Schauder regularity estimates for solutions to a special class of weighted parabolic equations: the weights appearing in the equations degenerate or explode on a characteristic hyperplane $\Sigma$ as $\mathrm{dist}(\cdot,\Sigma)^a$, where $a>-1$ is a fixed parameter. More precisely, we establish some local regularity estimates ``up to'' $\Sigma$ for weak solutions to the problem
\begin{equation}\label{eq:1}
\begin{cases}
    y^a \partial_tu - {\div }(y^{a} A\nabla u) = y^a f + {\div}(y^{a} F) \quad &\text{in } Q_1^+,\\
    \displaystyle{\lim_{y\to0^+}} y^a(A\nabla u+F)\cdot e_{N+1}=0              &\text{on }\partial^0Q_1^+.
\end{cases}
\end{equation}
Here $N\ge1$, $(z,t)=(x,y,t)\in\R^N\times\R\times\R$, $\Sigma=\{y=0\}$ and $\mathrm{dist}(P,\Sigma)^a=y^a$ is locally integrable whenever $a>-1$. $B_1\subset\R^{N+1}$ denotes the unit ball with center at $0$ and $B_1^+ := B_1\cap\{y>0\}$ the unit upper-half ball. Similar, if $I_1:=(-1,1)$, then $Q_1 := B_1\times I_1$ is the unit parabolic cylinder, $Q_1^+:=B_1^+\times I_1$ is the unit upper-half cylinder, while $\partial^0 Q_1^+ = Q_1 \cap\{y=0\}$. The operators $\nabla$ and $\div$ denote the gradient and the divergence w.r.t. the spatial variable $z$, respectively.  Furthermore, $A:Q_1^+\to \R^{N+1,N+1}$ is a symmetric $(N+1)$-dimensional matrix satisfying the following ellipticity condition: there exist $0 < \lambda \le \Lambda < +\infty$ such that
\begin{equation}\label{eq:UnifEll}
    \lambda|\xi|^2\le A(z,t)\xi\cdot\xi\le\Lambda|\xi|^2,
\end{equation}
for all $\xi\in\mathbb{R}^{N+1}$ and a.e. $(z,t)\in Q_1^+$, while the forcing terms in the r.h.s. $f:Q_1^+ \to \R$ and $F:Q_1^+ \to \R^{N+1}$ are given functions belonging to some suitable functional spaces.

In the simplest case where $A = I$ and $f=|F|=0$, problem \eqref{eq:1} (posed in the whole space) is nothing more than the gradient flow of the energy
\[
\int_{\R^{N+1}_+} y^a |\nabla v|^2 dz, \quad v \in H^1(\R^{N+1}_+,y^a).
\]
So, as one may imagine,
the natural functional setting involves the weighted Sobolev spaces involving time. Precisely, we say that $u$ is a weak solution to \eqref{eq:1} if $u \in L^2(I_1;H^1(B_1^+,y^a))\cap L^\infty(I_1;L^2(B_1^+,y^a))$ and satisfies
$$\int_{Q_1^+} y^a \big(-u\partial_t \phi+A\nabla u\cdot\nabla\phi\big)dzdt=\int_{Q_1^+} y^a\big(f\phi-F\cdot\nabla\phi\big)dzdt,$$
for every test function $\phi\in C_c^\infty(Q_1)$ (cf. Definition \ref{def.solution-mezza-palla}). Notice that weak solutions formally satisfy the \textsl{conormal boundary condition} 
\begin{equation}\label{naturalconormal}
\lim_{y\to0^+}y^a(A\nabla u+F)\cdot e_{N+1}=0 \quad \text{ on } \Sigma,
\end{equation}
appearing in \eqref{eq:1}: as standard in Neumann-type problems, one can easily check this integrating by parts and use the fact that the test functions $\phi$ need not to vanish on $\Sigma$. Actually, as a consequence of our main theorem (see \eqref{eq:boundary}), we will obtain that, under suitable regularity assumptions on the data, weak solutions satisfy 
\[
(A\nabla u+F)\cdot e_{N+1}=0 \quad \text{ on } \Sigma,
\]
which is stronger than \eqref{naturalconormal} (at least when $a > 0$) and, when $A = I$ and $|F| = 0$, reduces to the \emph{classical} Neumann boundary condition.

\smallskip

The regularity theory for \textsl{uniformly parabolic} equations is nowadays classical, see for instance \cites{LSU,Lie96}. By \textsl{uniformly parabolic} we mean that the second order leading term of the equation possesses uniformly elliptic coefficients in the sense of \eqref{eq:UnifEll}.
Then, many efforts have been made to prove regularity results for non-uniformly parabolic equations; that is, whenever at least one of the two bounds in \eqref{eq:UnifEll} fails.
Among all the papers on this topic, we quote the pioneering works \cite{FabKenSer82} for the elliptic case and then \cite{ChiSer85} for the parabolic counterpart. In these papers the authors established some Harnack inequalities and H\"older estimates for weak solutions to a family of second order equations with degenerate or singular weights, in which the uniform ellipticity condition fails: the weights may vanish or explode somewhere. Such results cover the case of weights $\omega$, either coming from quasiconformal mapping or belonging to the $A_2$-Muckenhoupt class, that is,
\begin{equation}\label{muckA2}
        \sup_{B} \Big(\frac{1}{|B|}\int_{B} \omega \Big)\Big(\frac{1}{|B|}\int_{B} \omega^{-1}\Big)\le C,
\end{equation}
where the supremum is taken over every ball $B\subset\mathbb{R}^{N+1}$ (see also \cite{GutWhe91} and the more recent \cites{banerjee,audritoterracini,audrito}). The weight term $|y|^a$ we are considering here is $A_2$-Muckenhoupt when $a\in(-1,1)$ only and thus, in this range, part of our theory falls into \cites{FabKenSer82,ChiSer85}: in particular, the H\"older continuity of solutions is already available at least for some implicit H\"older exponent.

However, the peculiar geometry of the degeneracy/singularity set of our weight - the characteristic hyperplane $\Sigma$ - allows us to get more information compared to the general theory quoted above. In fact, as already done in \cites{SirTerVit21a,TerTorVit22} in the elliptic setting, the approach we follow in this paper allows us to cover the full range $a>-1$ and eventually will allow us to show Schauder $C^{k,\alpha}_p$ estimates for any $k\in\mathbb N$ (we will treat the case $k \geq 2$ in a forthcoming paper). It is important to remark here that the regularity we obtain strongly relies on the \textsl{natural conormal boundary condition} \eqref{naturalconormal} we impose on the characteristic hyperplane $\Sigma$. As one may imagine, different boundary conditions lead to different regularity estimates: for instance, $v = y^{1-a}$ weakly solves ${\div }(y^{a} \nabla v)=0$ in $B_1^+$ with homogeneous Dirichlet boundary condition at $\Sigma$ whenever $a<1$, but it is no more than $C^{\lfloor 1-a\rfloor,1-a-\lfloor 1-a\rfloor}$ regular.

On the other hand, the study of weighted problems like \eqref{eq:1} is strongly related to the theory of \textsl{edge operators} \cites{Maz91,MazVer14}, \textsl{nonlocal operators} and \textsl{nonlocal diffusion}. The latter is the major motivation for the parabolic theory we develop here: it relies in the connection between a class of fractional heat operators like $(\partial_t - \Delta)^{\frac{1-a}{2}}$ - possibly with variable coefficients - and their extension theories \cites{NysSan16,StiTor17,banerjee}, which represent the parabolic counterpart of \cite{CafSil07}. Such kind of operators have been widely investigated in the last years, in many different contexts: we quote \cite{CAFFA-MELLET-SIRE} for reaction-diffusion equations with nonlocal diffusion, \cites{DGPT17,AthCafMil18,BanDanGarPet21} for obstacle type problems, \cite{audritoterracini} for the nodal set analysis of sign-changing solutions, and \cite{HYDER} where a new nonlocal harmonic maps flow was recently introduced.
A special mention goes to \cite{BisSti21}, where the authors proved some Schauder estimates for solutions to fractional parabolic equations involving $(\partial_t-\div_x(A(x)\nabla_x))^{\frac{1-a}{2}}$: respect to our notation, this corresponds to estimates in the $(x,t)$-variables on $\Sigma$ and $a\in(-1,1)$.

We also mention \cites{DongDirichlet,DonPha23} where the authors deal with parabolic weighted equations in divergence form as in \eqref{eq:1} (and in non divergence form as well). They require weaker assumptions on coefficients, and obtain estimates for solutions in weighted Sobolev $W^{1,q}$ spaces. This kind of result is comparable to our $C^{0,\alpha}_p$ regularity theory when $q$ is large but not with the higher $C^{1,\alpha}_p$ regularity we are able to obtain. However, as we will explain in a moment, we need to establish some H\"older regularity estimates which are stable with respect to a perturbation of the weight term, and this does not follow from \cite{DonPha23}. The stability is crucial in order to establish the higher order estimates with blow-up techniques.

\subsection*{Main results}
The main goal of the paper is to prove some local $C^{0,\alpha}_p$ and $C^{1,\alpha}_p$ regularity estimates - up to the characteristic hyperplane $\Sigma$ - for weak solutions to \eqref{eq:1} (see Section \ref{section:parabolic:holder;spaces} for the definitions of the H\"older space of parabolic type), under suitable assumptions on the matrix $A$ and the right hand sides. As already mentioned, the higher regularity of solutions, which is quite different from its elliptic counterpart \cites{SirTerVit21a,TerTorVit22}, will be treated in a forthcoming paper.

The main idea is to extend to the parabolic framework the regularization argument used in \cites{SirTerVit21a,SirTerVit21b} in the elliptic one: for $\varepsilon\in(0,1)$, we introduce the family of smooth weights
$$
\rho_\varepsilon^a(y):=(\varepsilon^2+y^2)^{a/2},
$$
and we consider weak solutions to  
\begin{equation}\label{eq:1:reg}
\begin{cases}
    \rho_\varepsilon^a \partial_t u - {\div }(\rho_\varepsilon^a A\nabla u) = \rho_\varepsilon^a f + {\div}(\rho_\varepsilon^a F) \quad &\text{in } Q_1^+\\
    \displaystyle{\lim_{y\to0^+}} \rho_\varepsilon^a(A\nabla u+F)\cdot e_{N+1}=0                                                        &\text{on } \partial^0Q_1^+,
\end{cases}
\end{equation}
which corresponds to the problem associated to the regularized weight (notice that by construction $\rho_\varepsilon^a(y)\to y^a$ almost everywhere as $\varepsilon\to0^+$). Since $\rho_\varepsilon^a$ is (locally) bounded and bounded away from zero, problem \eqref{eq:1:reg} is uniformly parabolic.  Consequently, the classical Schauder regularity theory applies (see for instance \cites{Lie96,LSU}) and one obtains $C^{0,\alpha}_p$ and $C^{1,\alpha}_p$ regularity estimates with constants \emph{possibly depending} on $\e$. The main result of the paper shows that such estimates are \emph{uniform} in $\e \in (0,1)$ and pass to the limit as $\e \to 0^+$. We refer to this property as $\varepsilon$-stability of the estimates. The latter, together with a fine approximation procedure (see Section \ref{section4}) yields our main result: 
\begin{teo}\label{teo:C^1,alpha}
    Let $a>-1$ and $u$ be a weak solution to 
    \eqref{eq:1}, in the sense of Definition \ref{def.solution-mezza-palla}. Then
\begin{enumerate}
    \item [(i)] If $A$ is a continuous matrix satisfying \eqref{eq:UnifEll}, $f\in L^p(Q_1^+,y^a )$ with $p>\frac{N+3+a^+}{2}$, $F\in L^q(Q_1^+, y^a )^{N+1}$ with $q>N+3+a^+$, $\alpha\in(0,1)\cap(0,2-\frac{N+3+a^+}{p}]\cap(0,1-\frac{N+3+a^+}{q}]$, then there exists a constant $C>0$, depending on $N$, $a$, $\lambda$, $\Lambda$, $p$, $q$ and $\alpha$ such that
    \begin{equation}\label{eq:C0,alpha}
        \|u\|_{C^{0,\alpha}_p(Q_{1/2}^+)}\le C\Big(
    \|u\|_{L^2(Q_1^+,y^a)}+
    \|f\|_{L^p(Q_1^+,y^a)}+
    \|F\|_{L^q(Q_1^+,y^a)}
    \Big).
    \end{equation}
    \item [(ii)] If $A,F\in C^{0,\alpha}_p(Q_1^+)$, with $A$ satisfying \eqref{eq:UnifEll}, $f\in L^p(Q_1^+,y^a )$ with $p>{N+3+a^+}$, $\alpha\in(0,1)\cap(0,1-\frac{N+3+a^+}{p}]$, then there exists a constant $C>0$, depending on $N$, $a$, $\lambda$, $\Lambda$, $p$, $\alpha$ and $\|A\|_{C^{0,\alpha}_p(Q_1^+)}$ such that
    \begin{equation}\label{eq:C1,alpha}
    \|u\|_{C^{1,\alpha}_p(Q_{1/2}^+)}\le C\Big(
    \|u\|_{L^2(Q_1^+,y^a)}+
    \|f\|_{L^p(Q_1^+,y^a)}+
    \|F\|_{C^{0,\alpha}_p(Q_1^+)}
    \Big).
    \end{equation}
    In addition, $u$ satisfies the conormal boundary condition
    \begin{equation}\label{eq:boundary}
    (A\nabla u+F)\cdot e_{N+1}=0\quad\mathrm{ on \  }\partial^0Q_{1/2}^+.
    \end{equation}
\end{enumerate}
Moreover, the estimates \eqref{eq:C0,alpha} and \eqref{eq:C1,alpha} are $\varepsilon$-stable in the sense of Theorems \ref{teo C^0-alpha} and \ref{teo-C1 alpha epsilon}.
\end{teo}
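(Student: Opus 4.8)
The plan is to transfer the $\varepsilon$-stable Schauder estimates for the regularized, uniformly parabolic problems \eqref{eq:1:reg} --- that is, Theorems \ref{teo C^0-alpha} and \ref{teo-C1 alpha epsilon} --- to the degenerate/singular problem \eqref{eq:1}, by approximating the given weak solution $u$. First I would fix, via Fubini's theorem, a radius $\rho\in(1/2,1)$ such that the restriction of $u$ to the lateral and bottom portions of the parabolic boundary of $Q':=B_\rho^+\times(-\rho^2,1)$ is integrable with weight $y^a$, hence admissible as Cauchy--Dirichlet datum; note $Q_{1/2}^+\Subset Q'\subset Q_1^+$, with the bottom of $Q'$ lying in the interior of $Q_1^+$. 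For each $\varepsilon\in(0,1)$ let $u_\varepsilon$ be the weak solution of the mixed problem given by the equation of \eqref{eq:1:reg} on $Q'$, the conormal condition on $\partial^0 Q'$, and $u_\varepsilon=u$ on the remaining part of the parabolic boundary of $Q'$; since $\rho_\varepsilon^a$ is bounded and bounded away from $0$ on $\overline{Q'}$, this is a classical uniformly parabolic problem, uniquely solvable.

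The second step is to prove that $u_\varepsilon\to u$. Testing the equation against $u_\varepsilon$, and against $u_\varepsilon-u$ to absorb the boundary datum, gives energy estimates uniform in $\varepsilon$ (using \eqref{eq:UnifEll} and the local uniform $L^1$ control of the weights, together with $\rho_\varepsilon^a\to y^a$ in $L^1_\loc$). Bounding $\partial_t u_\varepsilon$ in the dual of a suitable weighted space via the equation, an Aubin--Lions type argument yields, along a subsequence, $u_\varepsilon\rightharpoonup u_0$ weakly in $L^2((-\rho^2,1);H^1(B_\rho^+,y^a))$ and strongly in $L^2(Q',y^a)$. Passing to the limit in the weak formulation (with $\rho_\varepsilon^a\to y^a$ and dominated convergence) shows that $u_0$ is a weak solution of \eqref{eq:1} on $Q'$ with the same Cauchy--Dirichlet data as $u$; by uniqueness for this problem (a standard energy argument on the difference of two solutions) $u_0=u$, and the whole family converges.

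Third, I would apply Theorem \ref{teo C^0-alpha} (for (i)) or Theorem \ref{teo-C1 alpha epsilon} (for (ii)) to each $u_\varepsilon$ --- after a harmless rescaling, since $Q_{1/2}^+\Subset Q'$ --- obtaining bounds for $\|u_\varepsilon\|_{C^{0,\alpha}_p(Q_{1/2}^+)}$, resp. $\|u_\varepsilon\|_{C^{1,\alpha}_p(Q_{1/2}^+)}$, with constant independent of $\varepsilon$ and right-hand side $\|u_\varepsilon\|_{L^2(Q',y^a)}+\|f\|_{L^p}+\|F\|_{L^q}$, resp. $\|u_\varepsilon\|_{L^2(Q',y^a)}+\|f\|_{L^p}+\|F\|_{C^{0,\alpha}_p}$; the energy estimate bounds $\|u_\varepsilon\|_{L^2(Q',y^a)}$ by $\|u\|_{L^2(Q_1^+,y^a)}$ plus the data norms, yielding exactly the right-hand sides of \eqref{eq:C0,alpha}, \eqref{eq:C1,alpha}. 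By Arzel\`a--Ascoli and compactness of H\"older embeddings, a subsequence converges in $C^{0,\alpha'}_p$, resp. $C^{1,\alpha'}_p$, for every $\alpha'<\alpha$, necessarily to $u$; lower semicontinuity of the parabolic H\"older seminorms under locally uniform convergence then gives \eqref{eq:C0,alpha}, resp. \eqref{eq:C1,alpha}, for $u$. For \eqref{eq:boundary}: since $\rho_\varepsilon^a(0)=\varepsilon^a>0$ and $u_\varepsilon\in C^{1,\alpha}_p$ up to $\partial^0 Q'$ by classical Schauder theory, the conormal condition of \eqref{eq:1:reg} forces $(A\nabla u_\varepsilon+F)\cdot e_{N+1}=0$ pointwise on $\partial^0Q_{1/2}^+$, and this passes to the limit using $\nabla u_\varepsilon\to\nabla u$ uniformly on $\overline{Q_{1/2}^+}$ and the continuity of $A$, $F$. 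The $\varepsilon$-stability of \eqref{eq:C0,alpha}--\eqref{eq:C1,alpha} is then precisely the content of Theorems \ref{teo C^0-alpha} and \ref{teo-C1 alpha epsilon} read along this family.

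The genuinely hard part is internal to Theorems \ref{teo C^0-alpha} and \ref{teo-C1 alpha epsilon}: establishing Caccioppoli and De Giorgi--Nash--Moser--type H\"older estimates for \eqref{eq:1:reg} uniform in $\varepsilon$ --- where the special geometry of $\Sigma$ is essential, since $y^a$ fails to be $A_2$ for $a\ge1$ --- together with a Liouville theorem classifying entire solutions of controlled growth, satisfying the conormal condition, for the limiting degenerate equation $y^a\partial_t u-\div(y^a\nabla u)=0$, for the $\rho_1^a$-weighted equation, and for the plain heat equation. Granting these, the only remaining obstacle in the scheme above is in the approximation step: handling the trace of the merely weak solution $u$ on an interior parabolic boundary, proving uniqueness for the weighted Cauchy--Dirichlet problem for \eqref{eq:1}, and upgrading weak to strong $L^2(y^a)$ convergence so as to identify the limit of $u_\varepsilon$ with $u$.
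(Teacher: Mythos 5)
Your overall architecture coincides with the paper's: approximate $u$ by solutions $u_\varepsilon$ of regularized problems, invoke the $\varepsilon$-stable estimates of Theorems \ref{teo C^0-alpha} and \ref{teo-C1 alpha epsilon}, and pass to the limit by Arzel\`a--Ascoli. The genuine gap --- which you partly flag yourself --- is in your construction of the approximating family. Prescribing $u_\varepsilon=u$ on the lateral boundary of $Q'$ requires $u$ (or an extension of its trace) to belong to the energy space $L^2(I;H^1(B_\rho^+,\rho_\varepsilon^a))$ of the regularized problem, with norm bounded \emph{uniformly} in $\varepsilon$. For $a>0$ one only knows $u\in L^2(I;H^1(B_\rho^+,y^a))$, and since $\rho_\varepsilon^a\ge\varepsilon^a>0$ while $y^a$ vanishes on $\Sigma$, the latter space is strictly larger: a profile behaving like $y^{1-a}$ with $a\in(1/2,1)$ has finite $y^a$-energy but infinite unweighted (hence infinite $\rho_\varepsilon^a$-) energy. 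So the mixed problem you pose need not be solvable with that datum, and the uniform bound $\|u_\varepsilon\|_{L^2(Q',y^a)}\lesssim\|u\|_{L^2(Q_1^+,y^a)}+\dots$ that you need to close the estimate has no proof; a Fubini choice of $\rho$ only yields integrability of the trace against $y^a$ and does not cure the corner $\partial B_\rho\cap\Sigma$. A related difficulty appears even for $a\le0$: to run the energy estimate on $u_\varepsilon-u$ you must let $\partial_t u$ act on $L^2(I;H^1_0(B_\rho^+,\rho_\varepsilon^a))$, whereas Remark \ref{remark 2.3 bis} only places it in the dual of the $y^a$-weighted space, which is the smaller test space when $a<0$. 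Your claimed strong convergence in $L^2(Q',y^a)$ via Aubin--Lions is likewise an over-claim up to $\Sigma$, since the compact embedding and the duality identifications are uniform in $\varepsilon$ only away from $\Sigma$ (cf. Remark \ref{rem:h01h01rhoe}).

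The paper's Lemma \ref{y-to-rho} (with Remark \ref{rem:approx:half}) is built precisely to avoid this. One multiplies $u$ by a spatial cut-off $\xi$, so that $\tilde u=\xi u$ solves a Cauchy--Dirichlet problem with \emph{zero} lateral datum, an initial datum in $L^2(B,|y|^a)$, and extra $L^2(|y|^a)$ sources supported where $\nabla\xi\neq0$; for $a>0$ the data are then corrected by the factors $(|y|^a/\rho_\varepsilon^a)^{1/p}$, $(|y|^a/\rho_\varepsilon^a)^{1/2}$, which makes all $\rho_\varepsilon^a$-weighted norms uniformly bounded by construction. Lemma \ref{liebermann} then gives uniform energy bounds, Lemma \ref{rho-to-y} produces a convergent subsequence, and uniqueness of the zero-datum Cauchy--Dirichlet problem identifies the limit with $\tilde u$, hence with $u$ where $\xi\equiv1$. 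If you replace your approximation step by this construction (noting that the extra sources vanish on $Q_{3/4}^+$, where the uniform Schauder theorems are then applied), the remainder of your argument --- including deriving \eqref{eq:boundary} from the pointwise conormal condition at level $\varepsilon>0$ and the local $C^1$-convergence --- goes through as in the paper.
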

We would like to remark that the $\varepsilon$-stability of the $C^{k,\alpha}_p$ estimates with respect to the regularization described above can not be valid when $k\geq2$, see \cite{SirTerVit21a}*{Remark 5.4}. Moreover, let us stress again the fact that the $C^{0,\alpha}_p$ regularity above is comparable to the regularity theory in \cite{DonPha23}. However, our approach for the $C^{1,\alpha}_p$ regularity requires the $\varepsilon$-stability of the $C^{0,\alpha}_p$ estimate above: to the best of our knowledge, this is completely new in the parabolic setting.

The proof of our main theorem is based on a contradiction argument combined with a blow-up procedure (in the spirit of the classic paper by Simon \cite{simon}), which crucially exploit the following Liouville-type theorem for entire solutions having a certain growth-control at infinity.
\begin{teo}\label{teo liouville 1}
Let $a>-1$, $\varepsilon\in[0,1)$, $\gamma \in [0,2)$ and let $u$ be an entire solution to
\begin{equation}\label{eq-liouville-1}
       \begin{cases}
       \rho_\varepsilon^{a}\partial_t u-{\div}(\rho_\varepsilon^a \nabla u)=0&\mathrm{ in \ } \mathbb{R}^{N+1}_+\times\mathbb{R} \\
       \rho_\varepsilon^a\partial_y u=0&\mathrm{ on \ } \partial \mathbb{R}_+^{N+1} \times\mathbb{R}.
   \end{cases}
\end{equation}
Assume that
\begin{equation}\label{growth}
       |u(z,t)|\le C(1+(|z|^2+|t|)^\gamma)^{1/2} \quad  \mathrm{ for \  a.e. \ } (z,t) \in \mathbb{R}^{N+1}_+\times\mathbb{R}.
\end{equation}
Then $u$ is a linear function depending only on $x$. Moreover, if $\gamma\in[0,1)$, then $u$ is constant.
\end{teo}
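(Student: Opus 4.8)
The plan is to reduce the claim to a growth estimate on derivatives obtained by a Liouville-type iteration, following the classical scheme for entire caloric functions with polynomial growth. Since $\gamma < 2$, it suffices to show that every first-order derivative of $u$ is constant and that the time derivative vanishes; combined with the conormal condition $\rho_\varepsilon^a \partial_y u = 0$ on $\{y=0\}$ this will force $\partial_y u \equiv 0$ everywhere (using that $\rho_\varepsilon^a > 0$ when $\varepsilon > 0$, and an even-reflection/limiting argument when $\varepsilon = 0$), leaving $u$ affine in $x$ and $t$; the growth bound \eqref{growth} with $\gamma < 2$ then kills the $t$-dependence, and $\gamma < 1$ kills the $x$-dependence as well.

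First I would record the relevant \emph{interior and boundary Caccioppoli / De Giorgi--Nash--Moser machinery} for \eqref{eq-liouville-1}: because $\rho_\varepsilon^a$ is, for each fixed $\varepsilon \in [0,1)$, an $A_2$-Muckenhoupt weight on $\mathbb{R}^{N+1}$ that is moreover uniformly $A_2$ on parabolic cylinders after the natural parabolic rescaling, the results of \cite{ChiSer85} (or the $\varepsilon$-stable estimates of Theorem \ref{teo C^0-alpha} applied to the homogeneous equation on balls of radius $R$) give a scale-invariant interior Hölder estimate
\[
[u]_{C^{0,\beta}_p(Q_R^+)} \le \frac{C}{R^{\beta}} \dashint_{Q_{2R}^+} |u - \langle u\rangle|\, \rho_\varepsilon^a\, dz\, dt,
\]
for some $\beta \in (0,1)$ and $C$ independent of $R$ and of $\varepsilon \in [0,1)$ (this uniformity is exactly the point of the $\varepsilon$-stability). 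Next, since constants solve the equation and the equation has no zeroth-order term, one differentiates: the derivatives $\partial_{x_i} u$ solve the same equation (the weight does not depend on $x$ or $t$), so they also enjoy the scale-invariant Hölder bound; similarly $\partial_t u$ solves the same equation. The conormal derivative $\partial_y u$ is more delicate, but the quantity $\rho_\varepsilon^a \partial_y u$ satisfies a divergence-form equation with the boundary condition that it vanishes on $\Sigma$, and $\partial_y u$ itself can be controlled via the same $\varepsilon$-stable regularity theory (this is where the structure of \eqref{eq-liouville-1} as a conormal problem, rather than a general weighted problem, is essential).

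The core of the argument is then the standard \emph{growth-killing iteration}: from \eqref{growth}, $\dashint_{Q_{2R}^+}|u - \langle u\rangle| \rho_\varepsilon^a \lesssim R^{\gamma}$, hence $[u]_{C^{0,\beta}_p(Q_R^+)} \lesssim R^{\gamma - \beta}$; if $\gamma < \beta$ this already forces $u$ constant, but in general one bootstraps on the derivatives. Since each $\partial_{x_i} u$, $\partial_t u$ solves \eqref{eq-liouville-1}, apply the interior gradient estimate (again $\varepsilon$-stable, coming from $C^{1,\beta}_p$ theory for the homogeneous equation) to get $\|\nabla u\|_{L^\infty(Q_R^+)} \lesssim R^{\gamma - 1}$ and $\|\partial_t u\|_{L^\infty(Q_R^+)} \lesssim R^{\gamma - 1}$; letting $R \to \infty$ and using $\gamma < 2$ shows $\nabla^2 u$, $\nabla \partial_t u$, $\partial_t^2 u$ all satisfy bounds forcing them to decay, so after finitely many differentiations (at most two, since $\gamma<2$) all second derivatives vanish. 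Thus $u$ is a polynomial of parabolic degree at most one: $u(z,t) = b\cdot z + ct + d$. The boundary condition $\rho_\varepsilon^a \partial_y u = 0$ on $\Sigma$ gives $b_{N+1} = 0$ (for $\varepsilon>0$ directly; for $\varepsilon = 0$ by plugging back into the weak formulation and testing against functions not vanishing on $\Sigma$). Finally the growth condition with exponent $\gamma$: the parabolic size of $ct$ on $Q_R$ is of order $R^2$, incompatible with $R^\gamma$ for $\gamma < 2$ unless $c = 0$; and the size of $b\cdot x$ is of order $R$, incompatible with $R^\gamma$ for $\gamma < 1$ unless $b = 0$. This yields exactly the stated dichotomy.

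The main obstacle I anticipate is establishing the \emph{$\varepsilon$-uniform} interior Hölder and gradient estimates for \eqref{eq-liouville-1} on large cylinders — in particular making sure the $C^{1,\beta}_p$ estimate for the homogeneous constant-coefficient (in $x,t$) problem is genuinely scale-invariant and uniform as $\varepsilon \to 0$, and that differentiating in $x$ and $t$ is legitimate (which requires a qualitative smoothness statement for solutions of the regularized problem, available from classical theory for $\varepsilon > 0$ and recovered in the limit). The treatment of the normal derivative $\partial_y u$ near $\Sigma$, and the passage $\varepsilon \to 0$ in the boundary condition, is the second delicate point; here one leans on the specific conormal structure and, if needed, on the even reflection $u(z,t) \mapsto u(x,|y|,t)$ which extends a solution of \eqref{eq-liouville-1} with $\varepsilon = 0$ to an entire solution of the weighted equation across $\Sigma$, reducing boundary regularity to interior regularity for the reflected weight $|y|^a$.
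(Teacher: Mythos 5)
Your proposal has two genuine gaps. First, the regularity input you lean on is either unavailable or circular. The scale-invariant interior H\"older and $C^{1,\beta}_p$ estimates you invoke from \cite{ChiSer85} require the weight to be $A_2$-Muckenhoupt, which holds for $\rho_0^a=|y|^a$ only when $a\in(-1,1)$; the theorem covers the full range $a>-1$, including $a\ge 1$ where $|y|^{-a}$ is not locally integrable and the $A_2$ condition fails (for $\varepsilon>0$ the weight is locally nondegenerate but its $A_2$ constant on large cylinders is not uniform in $\varepsilon$). The alternative you suggest --- using the $\varepsilon$-stable estimates of Theorems \ref{teo C^0-alpha} and \ref{teo-C1 alpha epsilon} --- is circular within the paper's architecture: those theorems are proved by a blow-up argument whose compactness step is closed precisely by Theorem \ref{teo liouville 1}. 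The paper's proof deliberately avoids all pointwise estimates and runs entirely on the $L^2$ Caccioppoli inequality \eqref{cacioppoli-liouville}, iterated on derivatives, which is $\varepsilon$-uniform by construction.

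Second, and more fundamentally, your treatment of the $y$-direction does not work. Differentiation in $x$ and $t$ commutes with the operator because the weight depends only on $y$, but $\partial_y u$ does \emph{not} solve the same equation, and neither does $\partial_{yy}u$; so the step ``after finitely many differentiations all second derivatives vanish, hence $u$ is a polynomial of parabolic degree at most one'' has no justification in $y$. Moreover the conclusion itself is false in the weighted setting: the correct second-order object is the conjugated quantity $w_1=\rho_\varepsilon^{-a}\partial_y(\rho_\varepsilon^a\partial_y u)=\partial_{yy}u+\tfrac{(\rho_\varepsilon^a)'}{\rho_\varepsilon^a}\partial_y u$, and $w_1=0$ does not force $u$ to be affine in $y$ --- it forces $u(x,\cdot,t)$ into the span of $1$ and $g_1(y)=\int_0^y\rho_\varepsilon^{-a}(s)\,ds$, which is not a polynomial (for $\varepsilon=0$ it behaves like $y^{1-a}$). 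The paper's Step 3 handles this via the duality of Lemma \ref{y:derivative:solution} ($v=\rho_\varepsilon^a\partial_y u$ solves the equation with the conjugate weight $\rho_\varepsilon^{-a}$), iterates the Caccioppoli inequality on the composite operators $w_{j+1}=\partial_{yy}w_j+\tfrac{(\rho_\varepsilon^a)'}{\rho_\varepsilon^a}\partial_y w_j$ to get $w_k=0$ for large $k$, solves the resulting ODE explicitly in terms of the functions $g_i$ in \eqref{eq:def:g_i}, and then eliminates all $y$-dependence by combining the evenness of the reflected solution (which kills the odd $g_i$) with the asymptotics $g_{2i}\sim b_i y^{2i}$ and the sub-quadratic growth \eqref{growth} (which kills the even ones). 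Without this mechanism, or an equivalent one, your argument cannot conclude that $u$ is independent of $y$.
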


The proof of the Liouville theorem above is obtained with an iteration of a (parabolic) Caccioppoli-type inequality \eqref{caccioppoli}, in the spirit of \cite{TerTorVit22} and by a duality principle between $u$ and its weighted derivative $\rho_\varepsilon^a\partial_yu$ which solve respectively equations with weights $\rho_\varepsilon^a$ and $\rho_\varepsilon^{-a}$ as in \cite{CafSalSil08}.

\smallskip

Finally, as a consequence of our main theorem, we can treat more general equations with weights behaving as \emph{distance functions} to a $C^{1,\alpha}$ hypersurface $\Gamma \subset \R^{N+1}$ (curved characteristic manifolds) that we introduce below. Such equations are set in cylindrical domains $\Omega^+\times(-1,1)$ of $\R^{N+2}$ which ``live'' on one side of $\Gamma\times(-1,1)$. Specifically, up to rotations and dilations, $0\in\Gamma$ and there exist a spacial direction $y$ and a function $\varphi\in C^{1,\alpha}(B_1\cap\{y=0\})$ with $\varphi(0)=0$ and $\nabla_x\varphi(0)=0$ such that
\begin{equation}\label{phi}
\Omega^+\cap B_1=\{y>\varphi(x)\}\cap B_1,\qquad \Gamma\cap B_1=\{y=\varphi(x)\}\cap B_1.
\end{equation}
Then, the family of weights $\delta = \delta(z)$ we consider behave as a distance function to $\Gamma$ in the sense that $\delta\in C^{1,\alpha}(\Omega^+\cap B_1)$, and
\begin{equation}\label{delta}
\begin{cases}
\delta>0 &\mathrm{in \ } \Omega^+\cap B_1\\
|\nabla\delta|\geq c_0>0 &\mathrm{in \ } \Omega^+\cap B_1\\
\delta=0 &\mathrm{on \ } \Gamma\cap B_1,
\end{cases}
\end{equation}
and we consider weighted equations of the form
\begin{equation}\label{eq:1:curve}
\begin{cases}
    \delta^a \partial_tu - {\div }(\delta^{a} A\nabla u) = \delta^a f + {\div}(\delta^{a} F) \quad &\text{in } (\Omega^+\cap B_1)\times(-1,1),\\
    \delta^a(A\nabla u+F)\cdot \nu=0              &\text{on }(\Gamma\cap B_1)\times(-1,1),
\end{cases}
\end{equation}
where $\nu$ is the unit outward normal vector to $\Omega^+$ on $\Gamma$. The precise definition of solutions to \eqref{eq:1:curve} will be given later in Section \ref{sec:curve}, see Definition \ref{def.solution-curve}.

\begin{cor}\label{cor:C^1,alpha}
    Let $a>-1$ and $u$ be a weak solution to 
    \eqref{eq:1:curve}, in the sense of Definition \ref{def.solution-curve}. Let $\varphi\in C^{1,\alpha}(B_1\cap\{y=0\})$ be the parametrization defined in \eqref{phi} and $\delta\in C^{1,\alpha}(\Omega^+\cap B_1)$ satisfying \eqref{delta}. 
    
    Let $A,F\in C^{0,\alpha}_p((\Omega^+\cap B_1)\times(-1,1))$, with $A$ satisfying \eqref{eq:UnifEll}, $f\in L^p((\Omega^+\cap B_1)\times(-1,1),\delta^a )$ with $p>{N+3+a^+}$, $\alpha\in(0,1)\cap(0,1-\frac{N+3+a^+}{p}]$. Then, there exists a constant $C>0$, depending on $N$, $a$, $\lambda$, $\Lambda$, $p$, $\alpha$, $c_0$, $\|A\|_{C^{0,\alpha}_p((\Omega^+\cap B_1)\times(-1,1))}$, $\|\varphi\|_{C^{1,\alpha}(B_1\cap\{y=0\})}$ and $\|\delta\|_{C^{1,\alpha}(\Omega^+\cap B_1)}$ such that
    \begin{equation}\label{eq:C1,alpha:curve}
    \begin{aligned}
    \|u\|_{C^{1,\alpha}_p((\Omega^+\cap B_{1/2})\times(- 1/2,1/2))}\le C\Big(
    &\|u\|_{L^2((\Omega^+\cap B_1)\times(-1,1),\delta^a)} \\
    & \quad + \|f\|_{L^p((\Omega^+\cap B_1)\times(-1,1),\delta^a)}+
    \|F\|_{C^{0,\alpha}_p((\Omega^+\cap B_1)\times(-1,1))}
    \Big).
    \end{aligned}
    \end{equation}
    In addition, $u$ satisfies the conormal boundary condition
    \begin{equation}\label{eq:boundary:curve}
    (A\nabla u+F)\cdot \nu=0\quad\mathrm{ on \ }(\Gamma\cap B_1)\times(-1,1),
    \end{equation}
where $\nu$ is the unit outward normal vector to $\Omega^+$ on $\Gamma$.
\end{cor}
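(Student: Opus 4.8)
The plan is to reduce \eqref{eq:1:curve}, via a local $C^{1,\alpha}$ change of variables flattening $\Gamma$, to the already‑proven Theorem \ref{teo:C^1,alpha}(ii). Since \eqref{eq:C1,alpha:curve} is a local statement, I would cover the compact set $\overline{\Omega^+\cap B_{1/2}}\times[-1/2,1/2]$ by finitely many parabolic cylinders. At points at positive distance from $\Gamma\times\mathbb{R}$ the weight $\delta^a$ belongs to $C^{0,\alpha}_p$ and is bounded between two positive constants, so \eqref{eq:1:curve} is uniformly parabolic in divergence form and the classical Schauder estimates (e.g.\ \cite{Lie96}) give the $C^{1,\alpha}_p$ bound directly. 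It thus remains to establish the estimate near a point $(z_0,t_0)$ with $z_0\in\Gamma\cap\overline{B_{1/2}}$; after a translation, rotation and dilation we may assume $(z_0,t_0)=(0,0)$ and that \eqref{phi}--\eqref{delta} hold.

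Near $0$ I would introduce the $C^{1,\alpha}$ diffeomorphism $\Psi(x,y):=(x,\,y+\varphi(x))$, which has unit Jacobian, is the identity in $t$, carries $\{y=0\}$ onto $\Gamma$ and a half‑ball $B_\rho^+$ onto a neighbourhood of $0$ in $\Omega^+\cap B_1$. Setting $v:=u\circ\Psi$, a direct computation transforms the weak formulation of \eqref{eq:1:curve} into that of a problem of the form \eqref{eq:1} on $B_\rho^+\times(-1,1)$, with symmetric uniformly elliptic matrix $\hat A:=(D\Psi)^{-1}(A\circ\Psi)(D\Psi)^{-T}\in C^{0,\alpha}_p$, right‑hand sides $\hat f:=f\circ\Psi\in L^p$ and $\hat F:=(D\Psi)^{-1}(F\circ\Psi)\in C^{0,\alpha}_p$, the conormal condition on $\partial^0$, and — the only structural difference — weight
\[
\mu(x,y):=\delta(\Psi(x,y))^{a}=y^{a}\,h(x,y)^{a},\qquad h(x,y):=\int_0^1\partial_y(\delta\circ\Psi)(x,ty)\,dt.
\]
Since $\delta\circ\Psi\in C^{1,\alpha}$ vanishes on $\{y=0\}$, where $\partial_y(\delta\circ\Psi)=\nabla\delta\cdot e_{N+1}\ge c_0(1+\|\nabla_x\varphi\|_\infty^2)^{-1/2}>0$, the function $h$ is $t$‑independent, of class $C^{0,\alpha}_p$, and bounded between positive constants on a smaller half‑ball; hence so is $h^a$. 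Consequently $L^2(I;H^1(\cdot,\mu))=L^2(I;H^1(\cdot,y^a))$ with equivalent norms, the weighted $L^p$‑norms of $\hat f,\hat F$ are comparable to those of $f,F$, and the transformed test functions are Lipschitz, hence admissible in Definition \ref{def.solution-mezza-palla} by density; so $v$ is a weak solution of a problem exactly as in \eqref{eq:1} but with $y^a$ replaced by $\mu=y^ah^a$.

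Everything therefore reduces to proving \eqref{eq:C1,alpha} for problems whose weight is $y^a$ times a $t$‑independent, positive, bounded $C^{0,\alpha}_p$ factor. I expect this to be the heart of the matter, and it is precisely what the \emph{stability of the estimates under perturbations of the weight} — built into the proof of Theorem \ref{teo:C^1,alpha} through Theorems \ref{teo C^0-alpha} and \ref{teo-C1 alpha epsilon} — is meant to deliver. The whole scheme (the $\varepsilon$‑regularization $y^a\rightsquigarrow\rho_\varepsilon^a$, the $\varepsilon$‑stable Caccioppoli inequality \eqref{caccioppoli} and energy estimates, the blow‑up/compactness argument, and the Liouville Theorem \ref{teo liouville 1}) is insensitive to such a factor: in the divergence terms it is absorbed into $\hat A$, $\hat f$ and $\hat F$, while in front of $\partial_t v$ it is $t$‑independent and bounded above and below, hence inert in all the energy identities (one integrates by parts in $t$ exactly as before); and along any blow‑up at a point of $\Sigma$ the factor $h^a$ converges to a positive constant, so that — after normalising the now constant coefficients — the rescaled limit solves $y^a\partial_t u_\infty-\div(y^a\nabla u_\infty)=0$, which is the hypothesis of Theorem \ref{teo liouville 1} with $\varepsilon=0$. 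Granting this, \eqref{eq:C1,alpha} applies to $v$; pulling back through $\Psi$ (which preserves $C^{1,\alpha}_p$, being a $C^{1,\alpha}$ map that fixes $t$) gives the bound near $0$, and a standard patching of the finitely many local estimates yields \eqref{eq:C1,alpha:curve}. The conormal condition for $v$, namely $(\hat A\nabla v+\hat F)\cdot e_{N+1}=0$ on $\{y=0\}$ coming from \eqref{eq:boundary}, becomes — after multiplying by $h^a>0$ and undoing the change of variables, using that $(D\Psi)^T\nu$ is parallel to $-e_{N+1}$ on $\Gamma$ — exactly $(A\nabla u+F)\cdot\nu=0$, i.e.\ \eqref{eq:boundary:curve}.

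The genuine obstacle is thus that, $\Gamma$ and $\delta$ being only $C^{1,\alpha}$, the flattened weight $y^ah^a$ carries a factor $h^a$ that is really only of class $C^{0,\alpha}_p$ and cannot be removed by a smooth change of the $y$‑variable; one must therefore ensure that the \emph{entire} argument behind Theorem \ref{teo:C^1,alpha}, the $\varepsilon$‑stability of the intermediate estimates included, is robust under multiplication of the weight by a bounded positive $C^{0,\alpha}_p$ function — which is exactly why the bare $C^{0,\alpha}_p$ theory, lacking such stability, would not be enough here.
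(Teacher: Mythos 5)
Your proposal is correct and follows essentially the same route as the paper: flatten $\Gamma$ via $(x,y)\mapsto(x,y+\varphi(x))$, factor the transformed weight as $y^a(\tilde{\delta}/y)^a$ with $\tilde{\delta}/y$ H\"older continuous and bounded between positive constants, absorb that factor into $A$, $f$, $F$ in the divergence part while keeping it as a coefficient $b$ of $\partial_t$, and note that the $\varepsilon$-stable blow-up/Liouville machinery behind Theorem \ref{teo:C^1,alpha} is insensitive to such a bounded, positive, uniformly continuous $b$, since along any blow-up it converges to a positive constant. The paper's proof is exactly this two-step reduction (diffeomorphism, then the theory with an extra nondegenerate factor in front of the time derivative), so no further comparison is needed.
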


\subsection*{Structure of the paper}

The paper is organized as follows: in Section \ref{section2} we set up the problem introducing the energy spaces and the definition of weak solutions to \eqref{eq:1}.
In Section \ref{section3} we prove some uniform estimates, namely the Caccioppoli's inequality and the $L^\infty$ bounds, by using the De Giorgi's iterative technique. Section \ref{section4} is devoted to the proof of the approximation results, that is, the convergence (in suitable energy spaces) of the regularized solutions to weak solutions to \eqref{eq:1}. 
In Section \ref{section5}, we prove the Liouville Theorem \ref{teo liouville 1}. Finally, in Section \ref{section6} and Section \ref{section7}, we show the $\e$-stability of $C^{0,\alpha}_p$ and $C^{1,\alpha}_p$ regularity estimates mentioned above. This, together with the approximation argument in Section \ref{section4}, will prove our main Theorem \ref{teo:C^1,alpha} and Corollary \ref{cor:C^1,alpha}.

\section{Functional setting}\label{section2}
The present section is mostly devoted to the functional setting of the problem.

\subsection{Functional spaces}
\subsubsection{$C^k$ spaces} Let $k \in \mathbb{N}$ and $r>0$. We set 
\begin{align*}
C^k(\overline{B}_r\setminus\Sigma) &:= \{u \in C^k(B_r\setminus\Sigma): D^\alpha u \text{ is uniformly continuous on every } U \subset B_r \\
& \quad\;\;\; \text{ with } \dist(U,\Sigma) > 0, \text{ for every multiindex } |\alpha| \leq k \}, \\
C_c^k(\overline{B}_r\setminus\Sigma) &:= \{u \in C^k(\overline{B}_r\setminus\Sigma): \supp u \subset\subset  \overline{B}_r\setminus\Sigma  \}, \\
C_c^\infty(\overline{B}_r\setminus\Sigma) &:=  \bigcap_{k=1}^\infty C_c^k(\overline{B}_r\setminus\Sigma). 
\end{align*}
\subsubsection{Sobolev spaces} Let $a\in\mathbb{R}$, $\varepsilon\in[0,1)$ and $r>0$. For $p > 1$, we set 
\[
L^p(B_r,\rho_\varepsilon^a) := \{u:B_r \to \R \text{ measurable: } \int_{B_r}\rho_\varepsilon^a |u|^p dz < +\infty \},
\]
equipped with the norm 
\[
\|u\|_{L^p(B_r,\rho_\varepsilon^a )} := \left( \int_{B_r}\rho_\varepsilon^a |u|^p dz\right)^{{1}/{p}}.
\]
For fields, we define
\[
L^p(B_r,\rho_\varepsilon^a)^{N+1} := \{U:B_r \to \R^{N+1} \text{ measurable: } \int_{B_r}\rho_\varepsilon^a |U|^p dz < +\infty \},
\]
normed by $\|U\|_{L^p(B_r,\rho_\varepsilon^a)^{N+1}} := \||U|\|_{L^p(B_r,\rho_\varepsilon^a)}$.

The space $H^1(B_r,\rho_\varepsilon^a)$ is defined as the completion of $C^\infty(\overline{B}_r)$ w.r.t. the norm 
\begin{equation}\label{eq:WH1Norm}
\|v\|_{H^1(B_r,\rho_{\varepsilon}^{a})} = \left(\int_{B_r}\rho_{\varepsilon}^a v^2dz +\int_{B_r}\rho_{\varepsilon}^a|\nabla v|^2 dz \right)^{{1}/{2}},
\end{equation}
while the space $H^1_0(B_r,\rho_\varepsilon^a)$ is the completion of $C_c^\infty({B_r})$ w.r.t. the seminorm 
\begin{equation}\label{eq:WH10Norm}
\|v\|_{{H}_0^1(B_r,\rho_{\varepsilon}^{a})}=\left(\int_{B_r}\rho_{\varepsilon}^a|\nabla v|^2dz\right)^{{1}/{2}}.
\end{equation}
When $\e = 0$, we write $H^1(B_r,|y|^a)$ and $H_0^1(B_r,|y|^a)$, instead of $H^1(B_r,\rho_0^a)$ and $H_0^1(B_r,\rho_0^a)$, respectively. The symbol $H^{-1}(B_r,\rho_\varepsilon^a)$ denotes the topological dual space of $H^1_0(B_r,\rho_\varepsilon^a)$.

As observed in \cite{SirTerVit21a}, when $\e = 0$, the nature of such spaces is intrinsically related to the degeneracy/singularity of the weight $|y|^a$. Heuristically, when $a\le-1$ the weight $|y|^{a}$ is not locally integrable and thus the functions in $H^1(B_r,|y|^a)$ are forced to have zero trace on $\Sigma$. Conversely, when $a\ge1$, the weight has a strong degeneracy and the traces on $\Sigma$ of functions in $H^1(B_r,|y|^a)$ have no sense in general (this is due to the zero $H^{1}(B_r,|y|^a)$-capacity of $\Sigma$).

These observations suggest to introduce the space $\Tilde{H}^1(B_r,\rho_\varepsilon^a)$, defined as the completion of $C_c^\infty(\overline{B}_r\setminus\Sigma)$ w.r.t. \eqref{eq:WH1Norm} and similarly $\Tilde{H}^1_0(B_r,\rho_\varepsilon^a)$ as the completion of $C_c^\infty({B_r}\setminus\Sigma)$ w.r.t. \eqref{eq:WH10Norm}. As above, when $\e = 0$, we set $\Tilde{H}^1(B_r,|y|^a) := \Tilde{H}^1(B_r,\rho_0^a)$ and $\Tilde{H}_0^1(B_r,|y|^a) := \Tilde{H}_0^1(B_r,\rho_0^a)$.

When $a \in (-1,1)$, as we have previously remarked in the introduction, the weight $|y|^a$ belongs to the $A_2$ Muckenhoupt class; that is, \eqref{muckA2} holds true.

The following proposition characterizes the space $H^1(B_r,|y|^a)$. 
\begin{pro}\label{prop:CharH1a} (\cite{kil}*{Theorem 2.5} and \cite{SirTerVit21a}*{Proposition 2.2})

If $a \in (-1,1)$, then:
$$H^1(B_r,|y|^a) = W^{1,2}(B_r,|y|^a),$$
where 
\[
W^{1,2}(B_r,|y|^a) := \{ u \in L^2(B_r,|y|^a) : \nabla u \in L^2(B_r,|y|^a)^{N+1} \},
\]
and $\nabla u$ denotes the weak gradient of $u$.

If $a\in(-\infty,-1]\cup[1,+\infty)$, then:
\begin{align*}
H^1(B_r,|y|^a) &=\tilde{H}^1(B_r,|y|^a), \\
H_0^1(B_r,|y|^a) &=\tilde{H}_0^1(B_r,|y|^a);
\end{align*}
in particular, $C_c^\infty(\overline{B}_r\setminus\Sigma)$ is dense in $H^1(B_r,|y|^a)$ and $C_c^\infty(B_r\setminus\Sigma)$ is dense in $H_0^1(B_r,|y|^a)$.
\end{pro}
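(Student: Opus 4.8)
The plan is to separate the two regimes $a\in(-1,1)$ and $|a|\ge1$, which rest on different mechanisms. In every case the inclusions $\tilde{H}^1(B_r,|y|^a)\subseteq H^1(B_r,|y|^a)$ and $\tilde{H}_0^1(B_r,|y|^a)\subseteq H_0^1(B_r,|y|^a)$ hold trivially: a function in $C_c^\infty(\overline{B}_r\setminus\Sigma)$, extended by $0$ across $\Sigma$, belongs to $C^\infty(\overline{B}_r)$ and has finite weighted norm since $|y|^a$ is bounded on its support. So the content lies in the reverse inclusions. When $a\in(-1,1)$, the weight $|y|^a$ is $A_2$-Muckenhoupt and I would invoke the density theorem for $A_2$-weighted Sobolev spaces, \cite{kil}*{Theorem 2.5}. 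For completeness: $H^1\subseteq W^{1,2}$ follows from the embedding $L^2(B_r,|y|^a)\hookrightarrow L^1_{\loc}(B_r)$ (valid since $|y|^{\pm a}$ are locally integrable), so that a sequence of smooth functions Cauchy in the $H^1$-norm converges together with its gradients in $L^1_{\loc}$ and the limit of the gradients is the distributional gradient of the limit; for $W^{1,2}\subseteq H^1$ one runs the Meyers--Serrin scheme — multiply by a cutoff, dilate toward the centre of $B_r$, and mollify — the decisive point being that for $A_2$ weights the Hardy--Littlewood maximal operator is bounded on $L^2(B_r,|y|^a)$, which forces standard mollifications to converge in that space.

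For $|a|\ge1$ the idea is to approximate a smooth function $v$ of finite norm by $\chi_\delta v$, where $\chi_\delta=\chi_\delta(y)$ is a smooth cutoff with $0\le\chi_\delta\le1$, vanishing in a neighbourhood of $\Sigma$ and equal to $1$ outside a $\delta$-neighbourhood of $\Sigma$, so that $\chi_\delta v\in C_c^\infty(\overline{B}_r\setminus\Sigma)$ (and $\chi_\delta v\in C_c^\infty(B_r\setminus\Sigma)$ when $v$ has compact support in $B_r$). Since $\chi_\delta v-v=(\chi_\delta-1)v$ and $\nabla(\chi_\delta v)-\nabla v=(\chi_\delta-1)\nabla v-v\nabla\chi_\delta$, all the contributions carrying the factor $\chi_\delta-1$ are supported in a shrinking neighbourhood of $\Sigma$ and tend to $0$ in $L^2(B_r,|y|^a)$ — by dominated convergence if $a\ge1$ (then $|y|^a$ is integrable and $v,\nabla v$ are bounded), or by absolute continuity of the finite integral $\int_{B_r}|y|^a(v^2+|\nabla v|^2)$ if $a\le-1$. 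The only term demanding a choice of $\chi_\delta$ is $\|v\nabla\chi_\delta\|_{L^2(B_r,|y|^a)}$. If $a\ge1$, then $\Sigma$ has zero $H^1(B_r,|y|^a)$-capacity: a cutoff depending affinely on $\log|y|$ on the shell $\{\delta^2<|y|<\delta\}$ satisfies $\int_{B_r}|y|^a|\nabla\chi_\delta|^2\to0$ (with a logarithmic gain when $a=1$, a polynomial one when $a>1$), and since $v$ is bounded this disposes of the term, yielding at once the $H^1$ and the $H_0^1$ statements. If $a\le-1$ no such cutoff exists, and one must instead exploit that finiteness of the norm forces $v$ to vanish on $\Sigma$ to a definite order.

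This last point is the crux. When $a\le-1$ the weight $|y|^a$ is not integrable across $\Sigma$, so a smooth $v$ with $\int_{B_r}|y|^av^2<\infty$ cannot be bounded away from $0$ on any portion of $\Sigma$; hence $v|_\Sigma\equiv0$. Iterating: if $v,\partial_yv,\dots,\partial_y^{j-1}v$ all vanish on $\Sigma$ and $2j\le1-a$, then $v=y^jg$ with $g$ smooth, $|\nabla v|^2\gtrsim y^{2(j-1)}g^2$ near $\Sigma$, and $\int|y|^{a+2(j-1)}=\infty$ over any neighbourhood where $g(\cdot,0)\ne0$, so finiteness of $\int_{B_r}|y|^a|\nabla v|^2$ forces $\partial_y^jv|_\Sigma\equiv0$ as well. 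Thus $v$ vanishes on $\Sigma$ to order $m$, where $m$ is the least integer with $2m>1-a$, and consequently $|v(x,y)|\le C|y|^m$ near $\Sigma$. Choosing $\chi_\delta$ with $|\nabla\chi_\delta|\le C\delta^{-1}$ supported on $\{\delta<|y|<2\delta\}$, this gives $\int_{B_r}|y|^av^2|\nabla\chi_\delta|^2\lesssim\delta^{-2}\,\delta^{\,2m+a+1}=\delta^{\,2m+a-1}\to0$, since $2m+a-1>0$. For the $H_0^1$ statement one first recovers $v|_\Sigma\equiv0$ from the seminorm alone: if $v\in C_c^\infty(B_r)$ and $\int_{B_r}|y|^a|\nabla v|^2<\infty$, then $\int_{B_r}|y|^a|\nabla_xv|^2<\infty$ forces $\nabla_xv(\cdot,0)\equiv0$, so $v(\cdot,0)$ is constant on the connected set $\Sigma\cap B_r$ and vanishes because $v$ has compact support; from there the argument is unchanged.

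Finally, in every case the approximants $\chi_\delta v$ lie in $C_c^\infty(\overline{B}_r\setminus\Sigma)$, resp. $C_c^\infty(B_r\setminus\Sigma)$, and converge in the relevant norm to $v$; since the smooth functions of finite norm span the completions defining $H^1$ and $H_0^1$, these completions coincide with $\tilde{H}^1$ and $\tilde{H}_0^1$, which is precisely the asserted density. I expect the genuine difficulty to be the case $a\le-1$ and, within it, the bookkeeping that makes the forced vanishing order of $v$ compatible with the blow-up of $\nabla\chi_\delta$. What makes it work is that both the hypothesis $\int_{B_r}|y|^a|\nabla v|^2<\infty$ and the requirement $\int_{B_r}|y|^av^2|\nabla\chi_\delta|^2\to0$ for the linear cutoff reduce to the single inequality $2m>1-a$, so the matching is automatic; one should also keep in mind that, for $a\le-1$, the completions must be taken over those smooth functions whose weighted norm (resp. seminorm) is actually finite, which is a restrictive condition here.
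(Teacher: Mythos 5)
The paper offers no proof of this proposition: it is imported verbatim from the cited references (Kilpel\"ainen's Theorem~2.5 for the Muckenhoupt range $a\in(-1,1)$, and \cite{SirTerVit21a}*{Proposition 2.2} for $|a|\ge 1$), so the only meaningful comparison is with those proofs. Your reconstruction is correct and rests on the same two mechanisms: for $|a|<1$, the $A_2$ property and the Meyers--Serrin/maximal-function density argument, together with the embedding $L^2(B_r,|y|^a)\hookrightarrow L^1_{\loc}(B_r)$ (which needs $|y|^{-a}\in L^1_{\loc}$, i.e.\ $a<1$) to identify the abstract completion with $W^{1,2}$; for $|a|\ge1$, a cutoff argument in which the only delicate term is $\int_{B_r}|y|^a v^2|\nabla\chi_\delta|^2$, handled by the vanishing $H^1(B_r,|y|^a)$-capacity of $\Sigma$ (logarithmic cutoff) when $a\ge1$ and by the forced vanishing of $v$ on $\Sigma$ when $a\le-1$. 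The one place where you add genuine content is the quantitative matching for $a\le-1$: your iterated Taylor-order argument shows that a finite-energy smooth $v$ must vanish on $\Sigma$ to the least order $m$ with $2m>1-a$, which is exactly the threshold needed to beat the $\delta^{-2}$ blow-up of the linear cutoff, since $\delta^{-2}\delta^{\,2m+a+1}=\delta^{\,2m+a-1}\to0$. An alternative that avoids the induction is a weighted Hardy inequality, $\int_{B_r}|y|^{a-2}v^2\lesssim\int_{B_r}|y|^a|\partial_y v|^2$ once $v|_\Sigma=0$ (valid for $a<1$), which gives $\delta^{-2}\int_{\{\delta<|y|<2\delta\}}|y|^av^2\lesssim\int_{\{\delta<|y|<2\delta\}}|y|^{a-2}v^2\to0$ by absolute continuity; both routes are sound. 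Your closing caveat --- that for $a\le-1$ the completion must be taken over the smooth functions of finite weighted norm, which already forces high-order vanishing on $\Sigma$ --- is precisely the point that makes the statement non-vacuous.
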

The spaces introduced above enjoy interesting Sobolev embedding properties, depending on the value of the parameter $a$. 
\begin{teo} (\cite{Haj96}*{Theorem 6} and \cite{SirTerVit21a}*{Theorem 2.4}) 

Let  $\varepsilon\in[0,1)$ and $r\in[1/2,1]$. Assume either $a > -1$ and $N\ge2$, or $a>0$ and $N=1$. Then there exists $C > 0$ depending only on $N$ and $a$ such that
    $$\left(\int_{B_r}\rho_\varepsilon^a|u|^{2^*_a}dz\right)^{2/2^*_a}\le C\left(\int_{B_r}\rho_\varepsilon^a u^2 dz+\int_{B_r}\rho_\varepsilon^a|\nabla u|^2dz\right),$$
for every $u\in H^1(B_r,\rho_\varepsilon^a)$, where 
\[
2^\ast_a := \frac{2(N+1+a^+)}{N+a^+-1}.
\]
Further, if $N=1$ and $a\in(-1,0]$, the above inequality holds with $2^*_a$ replaced with any $p\in[1,+\infty)$ and a constant $C>0$ depending only on $N$, $a$ and $p$.    

\end{teo}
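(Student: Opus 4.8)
The plan is to prove the weighted Sobolev embedding $H^1(B_r,\rho_\varepsilon^a)\hookrightarrow L^{2^*_a}(B_r,\rho_\varepsilon^a)$ by reducing to the known unweighted and Muckenhoupt cases, treating the parameter ranges separately and tracking uniformity in $\varepsilon$.

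\smallskip

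\textbf{Step 1: The Muckenhoupt range $a\in(-1,1)$.} Here $|y|^a$ is an $A_2$ weight, and more relevantly $\rho_\varepsilon^a$ is uniformly $A_2$ with a Muckenhoupt constant independent of $\varepsilon\in[0,1)$ — this is a direct computation since $\rho_\varepsilon^a(y)=(\varepsilon^2+y^2)^{a/2}$ is comparable to $|y|^a$ away from $\Sigma$ and is bounded near $\Sigma$, and one checks \eqref{muckA2} on balls split according to whether they are close to or far from $\Sigma$. Once uniform $A_2$-boundedness is in hand, I would invoke the Fabes--Kenig--Serapioni / classical theory of Sobolev inequalities for $A_2$ weights (or directly Hajłasz's result \cite{Haj96}*{Theorem 6}, which is stated for this kind of weight) to get the inequality with exponent $2^*_a$ and constant depending only on $N,a$. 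The case $N=1$, $a\in(-1,0]$ is the genuinely one-dimensional subtlety: here $N+a^+-1=0$ so the formula for $2^*_a$ degenerates, and one instead uses that a one-dimensional $A_2$ weight gives embedding into every $L^p$, $p<\infty$; this is exactly the last sentence of the statement.

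\smallskip

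\textbf{Step 2: The degenerate/singular range $a\in(-\infty,-1]\cup[1,\infty)$, i.e.\ here $a\ge1$ (since $a>-1$).} By Proposition~\ref{prop:CharH1a}, in this range $H^1(B_r,|y|^a)=\tilde H^1(B_r,|y|^a)$, so it suffices to prove the inequality for $u\in C_c^\infty(\overline{B}_r\setminus\Sigma)$ and pass to the completion. For such $u$ one can work on $B_r^+$ (and $B_r^-$ by symmetry) and use the substitution/reflection trick: set $w=\rho_\varepsilon^{a/2}u$ or, more cleanly, exploit that the measure $\rho_\varepsilon^a\,dz$ on $B_r^+$ is, up to the boundedness of $\rho_\varepsilon$ near $\Sigma$, comparable to a measure under which one has a standard (unweighted) Sobolev inequality on the half-ball after an explicit change of variables in the $y$-direction. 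Concretely, near $\Sigma$ where $\varepsilon^2+y^2\sim\varepsilon^2$ the weight is bounded above and below, so the embedding is the ordinary one; away from $\Sigma$ the weight is $\sim y^a$ with $a\ge1$, and one uses the change of variables $s = y^{(a+1)/?}$ — the standard computation showing that $H^1(B^+,y^a)$ embeds into $L^{2^*_a}(B^+,y^a)$ with $2^*_a$ as above. The point is to do this so that the constant is $\varepsilon$-independent: since the bad region (small $y$) only makes the weight \emph{smaller} and bounded, adding it back costs nothing uniformly. I would cite \cite{SirTerVit21a}*{Theorem 2.4} and \cite{Haj96}*{Theorem 6} for the clean statement and just verify $\varepsilon$-uniformity of their constants.

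\smallskip

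\textbf{Step 3: Uniformity in $\varepsilon$ and $r$.} For $r\in[1/2,1]$ a scaling argument reduces everything to $r=1$ with constants depending only on the endpoints, so this is harmless. The $\varepsilon$-uniformity is the one genuinely new point to check relative to the cited $\varepsilon=0$ results: the key observation is the pointwise two-sided bound $c\,|y|^a \le \rho_\varepsilon^a(y)$ for $a\ge 0$ (resp.\ $\rho_\varepsilon^a(y)\le C|y|^a$ for $a<0$) combined with boundedness of $\rho_\varepsilon$ on compact sets uniformly in $\varepsilon$, which lets one sandwich the weighted norms and transfer the $\varepsilon=0$ inequality. I expect \textbf{the main obstacle} to be precisely organizing this sandwich so that \emph{both} sides of the inequality (the left-hand $L^{2^*_a}$ norm, which wants the weight large, and the right-hand $H^1$ norm, which wants it small) are controlled simultaneously with a single $\varepsilon$-independent constant — for $a\ge 1$ this forces one to use $|y|^a\le\rho_\varepsilon^a$ on the left and to absorb the extra mass of $\rho_\varepsilon^a$ over $|y|^a$ near $\Sigma$ on the right by a bounded multiplicative factor, which works because $\rho_\varepsilon^a - |y|^a$ is supported near $\Sigma$ and bounded there. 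Once this bookkeeping is set up, the inequality follows from the $\varepsilon=0$ case with no loss. The one-dimensional borderline case $N=1$, $a\in(-1,0]$ should be handled by the same sandwich applied to the $\varepsilon=0$, arbitrary-$p$ statement.
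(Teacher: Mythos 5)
First, a point of reference: the paper does not prove this statement at all — it is quoted from \cite{Haj96}*{Theorem 6} and \cite{SirTerVit21a}*{Theorem 2.4}, and in those references the $\varepsilon$-uniformity is obtained by checking that the measures $\mu_\varepsilon:=\rho_\varepsilon^a\,dz$ are doubling on $B_r$ with doubling constant and lower volume bound $\mu_\varepsilon(B(z,\rho))\gtrsim\rho^{N+1+a^+}$ \emph{uniform in $\varepsilon$}, together with a uniform Poincar\'e-type inequality, and then invoking Haj\l asz's abstract Sobolev embedding; the homogeneous dimension $N+1+a^+$ is what produces the exponent $2^*_a$. Your Step 1 is compatible with this route (note, though, that generic $A_2$ theory only yields \emph{some} gain $2k>2$, so the dimensional information is needed even in the Muckenhoupt range to reach the sharp $2^*_a$).

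The genuine gap is in Steps 2--3, i.e.\ precisely in the mechanism you propose for transferring the $\varepsilon=0$ inequality to $\varepsilon>0$. The weights $\rho_\varepsilon^a$ and $|y|^a$ are not uniformly comparable near $\Sigma$: for $a>0$ one has $\rho_\varepsilon^a/|y|^a\sim(\varepsilon/|y|)^a\to+\infty$ on $\{|y|\ll\varepsilon\}$, so the first link of your chain, $\int_{B_r}\rho_\varepsilon^a|u|^{2^*_a}\le C\int_{B_r}|y|^a|u|^{2^*_a}$, is simply false with a uniform constant: a bump of height $1$ supported in a ball of radius $\delta\ll\varepsilon$ centered on $\Sigma$ gives $\int\rho_\varepsilon^a|u|^{2^*_a}\sim\varepsilon^a\delta^{N+1}$ versus $\int|y|^a|u|^{2^*_a}\sim\delta^{N+1+a}$, a ratio $(\varepsilon/\delta)^a$ that blows up. The fallback you suggest — treating $\{|y|<\varepsilon\}$ as a region where the weight is ``bounded above and below'' and using the ordinary embedding there — also fails, because the bounds are $\sim\varepsilon^a$ (so they cancel only if one stays on the slab) and the unweighted Sobolev constant of a slab of thickness $\varepsilon$ degenerates as $\varepsilon\to0$; moreover the unweighted $H^1(B_r)$ norm is not controlled by $\|u\|_{H^1(B_r,\rho_\varepsilon^a)}$ when $a>0$. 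The inequality on $\{|y|<\varepsilon\}$ is genuinely rescued only by the contribution of $\{|y|\ge\varepsilon\}$ to the right-hand side, so any proof must couple the two regions — which is exactly what the uniform doubling/Poincar\'e argument does and what a region-by-region pointwise comparison cannot do. (For $a\in(-1,0)$ the same obstruction appears on the right-hand side instead, since there $|y|^a/\rho_\varepsilon^a$ is unbounded.) Two smaller issues: the reduction to $u\in C_c^\infty(\overline{B}_r\setminus\Sigma)$ via $H^1=\tilde H^1$ is an $\varepsilon=0$ fact and is unavailable for $\varepsilon>0$, where $H^1(B_r,\rho_\varepsilon^a)$ coincides with $H^1(B_r)$ and contains functions with nonzero trace on $\Sigma$; and the change of variables ``$s=y^{(a+1)/?}$'' is left unspecified and does not by itself produce the $(N+1)$-dimensional embedding with exponent $2^*_a$.
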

\begin{teo} (\cite{SirTerVit21a}*{Theorem 2.5} and \cite{SirTerVit21b}*{Lemma B.5})

    Let $a\le-1$, $N\ge2$, $\varepsilon\in[0,1)$ and $r\in[1/2,1]$. Then there exists a constant $C>0$ depending only on $N$ and $a$ such that
\begin{equation}\label{eq:embedding:non:int}
\left(\int_{B_r}(\rho_\varepsilon^a)^{2^*/2}|u|^{2^*}dz\right)^{2/2^*}\le  C\left(\int_{B_r}\rho_\varepsilon^a u^2dz+\int_{B_r}\rho_\varepsilon^a |\nabla u|^2dz\right),
\end{equation}    
for every $u\in \tilde{H}^1(B_r,\rho_\varepsilon^a)$, where 
$$2^* := \frac{2(N+1)}{N-1}.$$ 
Moreover, the inequality \eqref{eq:embedding:non:int} implies that
$$\left(\int_{B_r}\rho_\varepsilon^a|u|^{2^*}dz\right)^{2/2^*}\le  C\left(\int_{B_r}\rho_\varepsilon^a u^2dz+\int_{B_r}\rho_\varepsilon^a |\nabla u|^2dz\right). 
$$
Further, when $N=1$, the above inequalities hold with $2^*$ replaced by any $p\in[1,+\infty)$ and a constant $C>0$ depending only on $N,a,p$.
\end{teo}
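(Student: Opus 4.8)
The plan is to reduce the weighted estimate to the classical (unweighted) Sobolev inequality on the ball $B_r\subset\R^{N+1}$ through the substitution $w:=\rho_\varepsilon^{a/2}u$, the only genuinely weighted ingredient being a one dimensional Hardy inequality in the $y$ variable whose constant is independent of $\varepsilon$.

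Since, by definition, $\tilde H^1(B_r,\rho_\varepsilon^a)$ is the completion of $C_c^\infty(\overline{B}_r\setminus\Sigma)$ with respect to \eqref{eq:WH1Norm}, it suffices to prove \eqref{eq:embedding:non:int} for $u\in C_c^\infty(\overline{B}_r\setminus\Sigma)$ and then pass to the limit (applying the estimate to differences $u_k-u_j$ shows the embedding is bounded, hence the inequality is stable under $H^1$-convergence). For such a $u$, whose support lies at positive distance from $\Sigma$, where $\rho_\varepsilon^{a/2}$ and its derivatives are bounded, one has $w=\rho_\varepsilon^{a/2}u\in W^{1,2}(B_r)$. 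Since $N\ge2$, i.e.\ $N+1\ge3$, the Sobolev embedding $W^{1,2}(B_r)\hookrightarrow L^{2^*}(B_r)$ on the ball (an extension domain, with constant depending only on $N$ for $r\in[1/2,1]$) gives
\[
\Big(\int_{B_r}|w|^{2^*}\,dz\Big)^{2/2^*}\le C_N\Big(\int_{B_r}w^2\,dz+\int_{B_r}|\nabla w|^2\,dz\Big),\qquad 2^*=\tfrac{2(N+1)}{N-1}.
\]
Now the left-hand side is exactly $\big(\int_{B_r}(\rho_\varepsilon^a)^{2^*/2}|u|^{2^*}\,dz\big)^{2/2^*}$ and $\int_{B_r}w^2=\int_{B_r}\rho_\varepsilon^a u^2$, while the chain rule together with the elementary bound $|\nabla\rho_\varepsilon^{a/2}|^2\le\tfrac{a^2}{4}\rho_\varepsilon^{a-2}$ (immediate from $\rho_\varepsilon^2=\varepsilon^2+y^2$) yields $\int_{B_r}|\nabla w|^2\le 2\int_{B_r}\rho_\varepsilon^a|\nabla u|^2+\tfrac{a^2}{2}\int_{B_r}\rho_\varepsilon^{a-2}u^2$.

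Everything therefore reduces to the weighted Hardy inequality $\int_{B_r}\rho_\varepsilon^{a-2}u^2\,dz\le C_a\int_{B_r}\rho_\varepsilon^a|\partial_y u|^2\,dz$, with $C_a$ depending only on $a$ (in particular uniform in $\varepsilon\in[0,1)$); this is the point where the hypothesis $a\le-1$ is essential, and I expect it to be the main obstacle. By Fubini it is equivalent to the one dimensional statement $\int_{-L}^{L}\rho_\varepsilon^{a-2}(y)\,g(y)^2\,dy\le C_a\int_{-L}^{L}\rho_\varepsilon^a(y)\,g'(y)^2\,dy$ for $g$ smooth and vanishing in a neighbourhood of $y=0$ (the restriction of $u$ to a vertical segment; recall $u\equiv 0$ near $\Sigma$). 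I would prove it on $(0,L)$ (the reflected half being identical) by integrating by parts against the primitive $P(y):=-\int_y^{+\infty}\rho_\varepsilon^{a-2}(s)\,ds$, which is finite because $a-2<-1$: the boundary contribution at $y=0$ vanishes since $g\equiv 0$ there, and the one at $y=L$ equals $P(L)\,g(L)^2\le 0$ because $P<0$, and it is exactly the sign condition $a-1<0$ that produces this favourable sign and lets one dispense with any lower order term. A Young inequality then reduces matters to the pointwise estimate $|P(y)|\le C_a\,\rho_\varepsilon^{a-1}(y)$, that is $\int_y^{+\infty}(\varepsilon^2+s^2)^{(a-2)/2}\,ds\le C_a\,(\varepsilon^2+y^2)^{(a-1)/2}$, which follows from elementary computations distinguishing the ranges $y\ge\varepsilon$ and $y<\varepsilon$ and produces a constant depending only on $a$. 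Combined with the previous step, this proves \eqref{eq:embedding:non:int} with $C=C(N,a)$.

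Finally, the second displayed inequality follows by comparing weights: since $2^*>2$, $a<0$ and $\rho_\varepsilon\le\sqrt2$ on $B_r$ for $r\le1$, $\varepsilon<1$, the exponent $a(2-2^*)/2$ is positive, so $\rho_\varepsilon^a=(\rho_\varepsilon^a)^{2^*/2}\rho_\varepsilon^{a(2-2^*)/2}\le C\,(\rho_\varepsilon^a)^{2^*/2}$; integrating and raising to the power $2/2^*$ yields $\big(\int_{B_r}\rho_\varepsilon^a|u|^{2^*}\big)^{2/2^*}\le C\big(\int_{B_r}(\rho_\varepsilon^a)^{2^*/2}|u|^{2^*}\big)^{2/2^*}$, and the conclusion follows from \eqref{eq:embedding:non:int}. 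When $N=1$ the same scheme applies verbatim, replacing the Sobolev embedding in dimension $N+1=2$ by $W^{1,2}(B_r)\hookrightarrow L^p(B_r)$ for arbitrary $p\in[1,\infty)$ (the Hardy step being unchanged), which gives the stated estimates with $2^*$ replaced by $p$ and $C=C(N,a,p)$.
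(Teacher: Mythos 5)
Your proof is correct, and it follows the same route the paper indicates in Remark \ref{rem:IsoEll} for this (cited) theorem: the ground-state substitution $w=\rho_\varepsilon^{a/2}u$ applied to $u\in C_c^\infty(\overline{B}_r\setminus\Sigma)$, followed by the classical Sobolev inequality for $w$ on $B_r\subset\R^{N+1}$ and a density argument. The only place you diverge is in how the uniform-in-$\varepsilon$ comparison $\|w\|_{H^1(B_r)}^2\le C\,\|u\|_{H^1(B_r,\rho_\varepsilon^a)}^2$ is obtained: the paper relies on the exact quadratic form $Q_\varepsilon$ and its $\varepsilon$-uniform equivalence with the $H^1(B_r)$-norm from \cite{SirTerVit21b}*{Lemma B.4}, whereas you prove the needed one-sided bound directly, absorbing the cross term via the weighted Hardy inequality $\int_{B_r}\rho_\varepsilon^{a-2}u^2\le C_a\int_{B_r}\rho_\varepsilon^a|\partial_y u|^2$. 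Your verification of that inequality (integration by parts against $P(y)=-\int_y^{+\infty}\rho_\varepsilon^{a-2}$, the favourable sign of the boundary term at the outer endpoint, and the pointwise bound $|P|\le C_a\,\rho_\varepsilon^{a-1}$ checked separately for $y\ge\varepsilon$ and $y<\varepsilon$) is sound; the minor remark is that this step only uses $a<1$ rather than $a\le-1$ specifically, which is harmless. The derivation of the second displayed inequality from \eqref{eq:embedding:non:int} and the $N=1$ modification are also correct.
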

\begin{oss}\label{rem:IsoEll} It is worth mentioning that the theorem above (range $a \leq -1$) follows as a consequence of a fine analysis of the isometry
\begin{equation}\label{eq:IsoEll}
T_\varepsilon^a:\Tilde{H}^1(B_r,\rho^a_\varepsilon)\to \Tilde{H}^1(B_r) \quad\; u\rightarrow v := \sqrt{\rho_{\varepsilon}^{a}}u,
\end{equation}
where $\Tilde{H}^1(B_r)$ is the completion of $C_c^\infty(\overline{B}_r\setminus\Sigma)$ w.r.t. the norm 
$$Q_{\varepsilon}(v)=\int_{B_r} |\nabla v|^2+\int_{B_r} \left[\left( \frac{\partial_y\rho_{\varepsilon}^{a}}{2\rho_{\varepsilon}^{a}}\right)^2+\partial_y\left( \frac{\partial_y\rho_{\varepsilon}^{a}}{2\rho_{\varepsilon}^{a}}\right)\right]v^2-\int_{\partial B_r}\frac{\partial_y\rho_{\varepsilon}^{a}}{2\rho_{\varepsilon}^{a}}yv^2,$$
which turns out to be equivalent to the classical $H^1(B_r)$-norm, uniformly in $\e \in [0,1)$ (see \cite{SirTerVit21b}*{Lemma B.4}). This fact allows to apply the classical Sobolev inequality to $v = T_\varepsilon^a u$ and recover \eqref{eq:embedding:non:int} in terms of $u$.   
\end{oss}
\begin{oss}\label{rem:omega} Notice that the definitions and theorems above hold true replacing the ball with any open bounded domain $\Omega$, including the case of the half balls $B_r^+$. \end{oss}
\begin{oss}\label{rem:h01h01rhoe} 
Let $\Omega \subset \R^{N+1}$ be an open bounded set such that $\Omega \subset\subset \R^{N+1}\setminus\Sigma$ and, for every $\e \in [0,1)$, let $H_0^1(\Omega,\rho_\e^a)$ be the completion of $C_c^\infty(\Omega)$ w.r.t. the seminorm 
\[
\|u\|_{H_0^1(\Omega,\rho_\e^a)} := \left(\int_\Omega \rho_\e^a |\nabla u|^2 dz\right)^{{1}/{2}}.
\]
Then, for every $\e \in [0,1)$,
\begin{equation}\label{eq:EquivH10}
H_0^1(\Omega,\rho_\e^a) = H_0^1(\Omega).
\end{equation}
Indeed, $\dist(\Omega,\Sigma) \geq \delta$ for some $\delta > 0$ depending only on $\Omega$, and thus $\delta \leq \rho_\e^a \leq \delta^{-1}$ in $\R^{N+1}$ uniformly in $\e$, up to taking $\delta$ smaller. This shows that $\|\cdot\|_{H_0^1(\Omega,\rho_\e^a)} \sim \|\cdot\|_{H_0^1(\Omega)}$ which, in turn, readily implies \eqref{eq:EquivH10} by the definition of $H_0^1(\Omega)$ and $H_0^1(\Omega,\rho_\e^a)$. 
\end{oss}
\subsubsection{Sobolev spaces involving time.}\label{sobolev:spaces:time} Let $a\in\mathbb{R}$, $\varepsilon\in[0,1)$, $r>0$, $I_r := (-r^2,r^2)$ and $p,q \in (1,+\infty)$. We define
\[
L^q(I_r;L^p(B_r,\rho_\varepsilon^a)) := \{u : I_r \to L^p(B_r,\rho_\varepsilon^a) \text{ strongly measurable: } \int_{I_r} \|u(t)\|_{L^p(B_r,\rho_\varepsilon^a)}^q dt < +\infty \},
\]
equipped with the norm
\[
\|u\|_{L^q(I_r;L^p(B_r,\rho_\varepsilon^a))} := \left( \int_{I_r} \|u(t)\|_{L^p(B_r,\rho_\varepsilon^a)}^q dt \right)^{{1}/{q}}.
\]
The special case $p = q$ is the most relevant for the paper. In such case, we set $L^p(Q_r,\rho_\varepsilon^a) := L^p(I_r;L^p(B_r,\rho_\varepsilon^a))$ and
\[
\|u\|_{L^p(Q_r,\rho_\varepsilon^a)} := \|u\|_{L^p(I_r;L^p(B_r,\rho_\varepsilon^a))} = \left(\int_{Q_r}\rho_{\varepsilon}^a |u|^p dzdt \right)^{{1}/{p}}.
\]
Similarly, for fields we define
\[
L^q(I_r;L^p(B_r,\rho_\varepsilon^a)^{N+1}) := \{U : I_r \to L^p(B_r,\rho_\varepsilon^a )^{N+1} \text{ strongly measurable: } \int_{I_r} \|U(t)\|_{L^p(B_r,\rho_\varepsilon^a)^{N+1}}^q dt < +\infty \},
\]
normed by
\[
\|U\|_{L^q(I_r;L^p(B_r,\rho_\varepsilon^a)^{N+1})} := \left( \int_{I_r} \|U(t)\|_{L^p(B_r,\rho_\varepsilon^a)^{N+1}}^q dt \right)^{{1}/{q}}.
\]
As above, when $p = q$, we set $L^p(Q_r,\rho_\varepsilon^a)^{N+1} := L^p(I_r;L^p(B_r,\rho_\varepsilon^a)^{N+1})$ and
\[
\|U\|_{L^p(Q_r,\rho_\varepsilon^a)^{N+1}} := \|U\|_{L^p(I_r;L^p(B_r,\rho_\varepsilon^a )^{N+1})} = \left( \int_{Q_r}\rho_{\varepsilon}^a |U|^p dzdt \right)^{{{1}/{p}}}.
\]
We set
\[
L^\infty(I_r;L^p(B_r,\rho_\varepsilon^a)) := \{u : I_r \to L^p(B_r,\rho_\varepsilon^a) \text{ strongly measurable: } \esssup_{t\in I_r} \|u(t)\|_{L^p(B_r,\rho_\varepsilon^a)} < +\infty \},
\]
equipped with the norm
\[
\|u\|_{L^\infty(I_r;L^p(B_r,\rho_\varepsilon^a))} := \esssup_{t\in I_r} \|u(t)\|_{L^p(B_r,\rho_\varepsilon^a)},
\]
and 
\[
C(\overline{I}_r;L^p(B_r,\rho_\varepsilon^a)) := \{u : \overline{I}_r \to L^p(B_r,\rho_\varepsilon^a ) \text{ continuous: } \max_{t\in \overline{I}_r} \|u(t)\|_{L^p(B_r,\rho_\varepsilon^a )} < +\infty \},
\]
normed by
\[
\|u\|_{C(\overline{I}_r;L^p(B_r,\rho_\varepsilon^a ))} := \max_{t\in \overline{I}_r} \|u(t)\|_{L^p(B_r,\rho_\varepsilon^a )}.
\]
The space $L^2(I_r;H^1(B_r,\rho_{\varepsilon}^a))$ is defined as the completion of $C^\infty(\overline{Q}_r)$ w.r.t. the norm 
\begin{equation}\label{eq:L2H1Norm}
\|u\|_{L^2(I_r;H^1(B_r,\rho_{\varepsilon}^a ))} := \left(\int_{I_r}\|u(t)\|_{{H}^1(B_r,\rho_{\varepsilon}^a)}^2 dt  \right)^{{1}/{2}} = \left(\int_{Q_r}\rho_{\varepsilon}^a u^2dzdt + \int_{Q_r}\rho_{\varepsilon}^a|\nabla u|^2 dzdt \right)^{{1}/{2}},
\end{equation}
while $L^2(I_r;H_0^1(B_r,\rho_{\varepsilon}^a))$ is the completion of $C_c^\infty(Q_r) $ w.r.t. the seminorm 
$$
\|u\|_{L^2(I_r;H^1_0(B_r,\rho_{\varepsilon}^a))} := \left(\int_{I_r}\|u(t)\|_{H^1_0(B_r,\rho_{\varepsilon}^a)}^2 dt  \right)^{{1}/{2}}= \left(\int_{Q_r}\rho_{\varepsilon}^a|\nabla u|^2 dzdt \right)^{{1}/{2}}.
$$
Notice that by the Riesz's representation theorem, the topological dual space of $L^2(I_r;H^1_0(B_r,\rho_\varepsilon^a))$ satisfies
\[
L^2(I_r;H^1_0(B_r,\rho_\varepsilon^a))^\star = L^2(I_r;H^{-1}(B_r,\rho_\varepsilon^a)).
\]
\begin{oss}\label{remark 2.3}
Later on in the paper we will use the following classical fact, see \cite{lions}*{Proposition 2.1, Theorem 3.1}: there exists $C > 0$ depending only on $r$, such that
\[
\|u\|_{C(\bar{I}_r;L^2(B_r,\rho_\varepsilon^a))} \leq
  C \big(  \|u\|_{L^2(\overline{I}_r;H^1_0(B_r,\rho_\varepsilon^a ))} + \|\partial_t u\|_{L^2(I_r;H^{-1}(B_r,\rho_\varepsilon^a ))}  \big),
\]
where $\partial_t u$ denotes the weak time derivative of $u$. That is, if $u \in L^2(I_r;H^1_0(B_r,\rho_\varepsilon^a))$ and $\partial_t u \in L^2(I_r;H^{-1}(B_r,\rho_\varepsilon^a ))$, then $u \in C(\overline{I}_r; L^2(B_r,\rho_\varepsilon^a))$.
\end{oss}
Exploiting the Sobolev inequalities above, one can prove their ``parabolic versions'' (see for instance \cite{ChiSer85}, or the more recent \cite{audrito}).
\begin{teo}
Let $\varepsilon\in[0,1)$ and $r\in[1/2,1]$. Assume either $a > -1$ and $N\ge2$, or $a>0$ and $N=1$. Then there exists $C > 0$ depending only on $N$ and $a$ such that
\begin{equation}\label{audrito A.3}
\int_{Q_r}\rho_\varepsilon^a |u|^{2\gamma}dzdt \le C \left(\int_{Q_r}\rho_\varepsilon^a \left(u^2 +|\nabla u|^2\right)dzdt\right) \esssup_{t\in I_r }\left(\int_{Q_r}\rho_\varepsilon^a u^2 dzdt\right)^{\gamma-1},
\end{equation}
for every $u\in L^2(I_r;H^1(B_r,\rho_\varepsilon^a))$, where 
$$
\gamma := 2 \cdot \frac{2^*_a-1}{2^*_a}.
$$
Further, if $N=1$ and $a\in(-1,0]$, the above inequality holds with $\gamma$ replaced with any $p\in[1,2)$ and a constant $C>0$ depending only on $N$, $a$ and $p$.    
\end{teo}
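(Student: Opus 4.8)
The statement is the space--time counterpart of the weighted Sobolev embedding recalled just above, and the plan is to deduce it from that spatial inequality by the classical Gagliardo--Nirenberg slicing argument in the time variable, exactly as in the unweighted parabolic theory (cf.\ \cite{ChiSer85}). First I would note that one may assume the right-hand side of \eqref{audrito A.3} is finite, so that in particular $u\in L^\infty(I_r;L^2(B_r,\rho_\e^a))$, and that, since $u\in L^2(I_r;H^1(B_r,\rho_\e^a))$, the slice $u(\cdot,t)$ belongs to $H^1(B_r,\rho_\e^a)$ for a.e.\ $t\in I_r$; by the density of $C^\infty(\overline Q_r)$ it suffices to argue for smooth $u$ and pass to the limit in the completion.

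The core is a Lebesgue interpolation in the spatial variable at fixed $t$, in the measure $\rho_\e^a\,dz$ on $B_r$. Since $\gamma=2\cdot\frac{2^*_a-1}{2^*_a}$ satisfies $2\le 2\gamma\le 2^*_a$ (the rightmost inequality being equivalent to $(2^*_a-2)^2\ge0$), one has $\|u(t)\|_{L^{2\gamma}}\le\|u(t)\|_{L^2}^{1-\theta}\|u(t)\|_{L^{2^*_a}}^{\theta}$ with $\frac1{2\gamma}=\frac{1-\theta}{2}+\frac{\theta}{2^*_a}$, and a short computation using the definition of $\gamma$ shows that necessarily $\theta=1/\gamma$. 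Raising this to the power $2\gamma$ gives
\[
\int_{B_r}\rho_\e^a|u(t)|^{2\gamma}\,dz\ \le\ \Big(\int_{B_r}\rho_\e^a u(t)^2\,dz\Big)^{\gamma-1}\Big(\int_{B_r}\rho_\e^a|u(t)|^{2^*_a}\,dz\Big)^{2/2^*_a}.
\]
Next I would apply the spatial weighted Sobolev inequality stated above to the slice $u(\cdot,t)$, bounding the last factor by $C\int_{B_r}\rho_\e^a\big(u(t)^2+|\nabla u(t)|^2\big)\,dz$ with $C=C(N,a)$ uniform in $\e\in[0,1)$; then I would estimate $\big(\int_{B_r}\rho_\e^a u(t)^2\,dz\big)^{\gamma-1}$ by $\esssup_{s\in I_r}\big(\int_{B_r}\rho_\e^a u(s)^2\,dz\big)^{\gamma-1}$ and integrate the resulting fixed-$t$ estimate over $I_r$, which yields \eqref{audrito A.3}.

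The borderline case $N=1$, $a\in(-1,0]$ (where $2^*_a$ is not available) would be handled by running the same two steps with the variant of the spatial Sobolev inequality valid with $2^*_a$ replaced by an arbitrary $p\in[1,+\infty)$; choosing $p$ suitably large then produces \eqref{audrito A.3} with $\gamma$ replaced by any prescribed value in $[1,2)$, at the cost of a constant depending also on that value. I do not expect a genuine obstacle here: the argument is routine, and the only points that need some care are the exponent bookkeeping confirming that $\theta=1/\gamma\in[0,1]$ is an admissible interpolation parameter, the measurability in $t$ of the quantities involved, and the legitimacy of passing from the fixed-$t$ bound to its integrated version — which is precisely where the hypotheses $u\in L^2(I_r;H^1)\cap L^\infty(I_r;L^2)$ enter.
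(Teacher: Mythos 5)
Your proposal is correct: the exponent bookkeeping checks out ($\theta=1/\gamma$ is forced by the definition of $\gamma$, $2\le 2\gamma\le 2^*_a$ holds, and the $N=1$, $a\le 0$ case follows by choosing the spatial exponent $P$ so that $2\tfrac{P-1}{P}$ equals the prescribed value in $[1,2)$), and the slice-interpolate-integrate argument is exactly the standard route the paper intends — indeed the paper states this theorem without proof, merely pointing to \cite{ChiSer85} and \cite{audrito} after the spatial embeddings. The only cosmetic remark is that the $\esssup$ factor in the paper's display is written with $Q_r$ where your derivation (correctly) produces the spatial integral over $B_r$ at fixed $t$.
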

The same isometry in \eqref{eq:IsoEll} can be easily extended to the parabolic setting as a map
\begin{equation}\label{eq:IsoParabolic}
\overline{T}_\varepsilon^a : L^2(I_r;\Tilde{H}^1(B_r,\rho_\varepsilon)) \to L^2(I_r;\Tilde{H}^1(B_r))\quad\; u\rightarrow v := \sqrt{\rho_{\varepsilon}^{a}}u.
\end{equation}
Notice that $\overline{T}_\varepsilon^a$ is still an isometry if $L^2(I_r;\Tilde{H}^1(B_r))$ is normed by
$$
\|v\|_{L^2(I_r;\Tilde{H}^1(B_r))} = \left(\int_{I_r} Q_\varepsilon(v(t)) dt\right)^{ 1/2},
$$
and $L^2(I_r;\Tilde{H}^1(B_r,\rho_\varepsilon))$ stands for the completion of $C_c^\infty(\overline{Q}_r\setminus\Sigma)$ w.r.t. \eqref{eq:L2H1Norm}. Working as in the stationary (time-independent) framework, one can see that such a norm is equivalent to the standard $L^2(I_r;H^1(B_r))$-norm, uniformly in $\varepsilon\in[0,1)$. As a consequence, we obtain the following Sobolev embeddings when $a\le-1$.
\begin{teo}
    Let $a\le-1$, $N\ge2$, $\varepsilon\in[0,1)$, $r\in[1/2,1]$. Then there exists $C>0$ depending only on $N$ and $a$ such that
    \begin{equation*}\label{audrito A.3-a<-1}
        \int_{Q_r}\rho_\varepsilon^a|u|^{2\gamma}dzdt\le C\left(
        \int_{Q_r} \rho_\varepsilon^a\left(u^2 +|\nabla u|^2\right)dzdt
        \right)\esssup_{t\in I_r}\left(\int_{Q_r}\rho_\varepsilon^a u^2dzdt\right)^{\gamma-1},
    \end{equation*}
for every $u\in L^2(I_r;\tilde{H}^1(B_r,\rho_\varepsilon^a))$, where 
$$
\gamma := 2 \cdot\frac{2^*-1}{2^*}.
$$
Further, if $N=1$, the above inequality holds with $\gamma$ replaced with any $p\in[1,2)$ and a constant $C>0$ depending only on $N$, $a$ and $p$.
\end{teo}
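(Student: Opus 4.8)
The plan is to deduce the parabolic inequality from the elliptic Sobolev embedding \eqref{eq:embedding:non:int} by the classical slicing–interpolation scheme, exactly as one obtains the analogous statement in the range $a>-1$. First I would record that, with $2^{*}=\frac{2(N+1)}{N-1}$, the exponent $\gamma=2\cdot\frac{2^{*}-1}{2^{*}}=2-\frac{2}{2^{*}}$ satisfies $1<\gamma<2$ (since $N\ge 2$ forces $2^{*}>2$) and hence $2\le 2\gamma\le 2^{*}$, the upper bound being equivalent to $(2^{*}-2)^{2}\ge 0$. This is precisely the range in which Hölder's inequality interpolates the weighted $L^{2\gamma}(B_{r},\rho_{\varepsilon}^{a})$-norm between the $L^{2}$- and $L^{2^{*}}$-norms; the only splitting for which the $L^{2^{*}}$-factor appears with exponent $2$ is $\|u(t)\|_{L^{2\gamma}}^{2\gamma}\le\|u(t)\|_{L^{2}}^{2(\gamma-1)}\|u(t)\|_{L^{2^{*}}}^{2}$, where for a.e.\ fixed $t\in I_{r}$ all norms are taken over $B_{r}$ with weight $\rho_{\varepsilon}^{a}$, and the associated exponent identity $1=(\gamma-1)+\frac{2}{2^{*}}$ is nothing but the definition of $\gamma$.

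Next, since $u\in L^{2}(I_{r};\tilde H^{1}(B_{r},\rho_{\varepsilon}^{a}))$, for a.e.\ $t\in I_{r}$ the time-slice $u(t)$ belongs to $\tilde H^{1}(B_{r},\rho_{\varepsilon}^{a})$, so I would apply the (un-normalised) consequence of \eqref{eq:embedding:non:int} stated just after it, namely $\|u(t)\|_{L^{2^{*}}(B_{r},\rho_{\varepsilon}^{a})}^{2}\le C\big(\int_{B_{r}}\rho_{\varepsilon}^{a}u(t)^{2}\,dz+\int_{B_{r}}\rho_{\varepsilon}^{a}|\nabla u(t)|^{2}\,dz\big)$, with $C=C(N,a)$ independent of $\varepsilon$. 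Plugging this into the interpolation inequality and bounding the remaining factor by $\big(\int_{B_{r}}\rho_{\varepsilon}^{a}u(t)^{2}\,dz\big)^{\gamma-1}\le\big(\esssup_{t\in I_{r}}\int_{B_{r}}\rho_{\varepsilon}^{a}u(t)^{2}\,dz\big)^{\gamma-1}$, I obtain a pointwise-in-$t$ bound for $\int_{B_{r}}\rho_{\varepsilon}^{a}|u(t)|^{2\gamma}\,dz$ whose right-hand side is the product of the $\varepsilon$-uniform quantity $\big(\esssup_{t}\int_{B_{r}}\rho_{\varepsilon}^{a}u^{2}\,dz\big)^{\gamma-1}$ and the energy density $\int_{B_{r}}\rho_{\varepsilon}^{a}(u(t)^{2}+|\nabla u(t)|^{2})\,dz$. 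Integrating over $t\in I_{r}$ — legitimate because $t\mapsto\|u(t)\|_{L^{2\gamma}}$ and the energy density are measurable by the usual slicing properties of Bochner spaces, and trivially true when the $\esssup$ factor is infinite — yields the claim, with a constant that inherits the $\varepsilon$-uniformity of \eqref{eq:embedding:non:int} and depends only on $N$ and $a$. The case $N=1$ is handled verbatim with $2^{*}$ replaced throughout by an arbitrary $p\in(1,\infty)$ and $\gamma=2-2/p$, using the $N=1$ version of \eqref{eq:embedding:non:int}.

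I do not expect a genuine obstacle: every ingredient is either elementary (Hölder) or already available (the elliptic embedding, with its crucial $\varepsilon$-uniform constant). The one point requiring care is the bookkeeping of exponents, i.e.\ arranging that after the time integration the weighted Dirichlet-type energy $\int_{Q_{r}}\rho_{\varepsilon}^{a}(u^{2}+|\nabla u|^{2})\,dzdt$ enters to the first power while the residual $L^{\infty}_{t}L^{2}$ mass carries exactly the power $\gamma-1$; this is what forces the stated value of $\gamma$. As an alternative — the route suggested by the sentence preceding the statement — I would instead pass through the parabolic isometry $\overline T_{\varepsilon}^{a}$ of \eqref{eq:IsoParabolic}: writing $v=\sqrt{\rho_{\varepsilon}^{a}}\,u$, the assertion becomes the classical unweighted Ladyzhenskaya–Gagliardo–Nirenberg inequality for $v\in L^{2}(I_{r};\tilde H^{1}(B_{r}))$, and the $\varepsilon$-uniform equivalence of the quadratic form $Q_{\varepsilon}$ with the standard $H^{1}(B_{r})$-norm recalled in Remark~\ref{rem:IsoEll} transfers it back to $u$ with an $\varepsilon$-independent constant.
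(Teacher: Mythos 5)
Your proof is correct and follows essentially the route the paper intends: the paper states this theorem without a written proof, presenting it as a consequence of the isometry $\overline{T}_\varepsilon^a$ and the $\varepsilon$-uniform norm equivalence, but the underlying mechanism is exactly your slicewise H\"older interpolation $\|u(t)\|_{L^{2\gamma}}^{2\gamma}\le\|u(t)\|_{L^{2}}^{2(\gamma-1)}\|u(t)\|_{L^{2^{*}}}^{2}$ combined with the already-established elliptic embedding for $a\le-1$ and integration in time. Your exponent bookkeeping ($\gamma-1+2/2^{*}=1$, $2\le 2\gamma\le 2^{*}$) is accurate, and the alternative via $v=\sqrt{\rho_\varepsilon^a}\,u$ that you sketch is precisely the paper's one-line justification.
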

\begin{oss} As explained in Remark \ref{rem:omega}, one can define the spaces $L^q(I_r;L^p(B_r^+,\rho_\varepsilon^a))$, $L^q(I_r;L^p(B_r^+,\rho_\varepsilon^a)^{N+1})$ and $C(\overline{I}_r;L^p(B_r^+,\rho_\varepsilon^a))$, and the Sobolev spaces $L^2(I_r;H^1(B_r^+,\rho_{\varepsilon}^a))$ and $L^2(I_r;\tilde{H}^1(B_r^+,\rho_{\varepsilon}^a))$.
\end{oss}
%
%
%
%
%
%
%
%
%
\subsection{Parabolic Hölder spaces}\label{section:parabolic:holder;spaces}
In this section we recall the definitions of the H\"older spaces of parabolic type we use later on in the paper. We follow \cite{Lie96}*{Chapter 4} (see also \cite{LSU}*{Chapter 1}).

Let $\Omega\subset\mathbb{R}^{N+1}\times\R$ be an open subset and $u:\Omega\to\mathbb{R}$. The parabolic distance $d_p:\Omega\times\Omega\to\mathbb{R}$ is defined by
\begin{equation}\label{eq:par:dist}
d_p((z,t),(\zeta,\tau)) := (|z-\zeta|^2+|t-\tau|)^{1/2}, 
\end{equation}
for all $(z,t),(\zeta,\tau)\in\Omega$, where $z,\zeta\in\mathbb{R}^{N+1}$, $t,\tau\in\mathbb{R}$. Notice that $d_p$ is parabolically $1$-homogeneous, in the sense that 
$$
d_p((rz,r^2t),(r\zeta,r^2\tau))=rd_p((z,t),(\zeta,\tau)), \quad \forall r \in \mathbb{R}.
$$
For $\alpha\in(0,1]$, we define the seminorms
$$
[u]_{C^{0,\alpha}_p(\Omega)} := \sup_{\substack{(z,t),(\zeta,\tau) \in\Omega \\ (z,t)\not=(\zeta,\tau)}}\frac{|u(z,t)-u(\zeta,\tau)|}{(|z-\zeta|^2+|t-\tau|)^{\alpha/2}}, \qquad [u]_{C^{\alpha}_t(\Omega)} := \sup_{\substack{(z,t),(z,\tau) \in\Omega \\ t\not=\tau}}\frac{|u(z,t)-u(z,\tau)|}{|t-\tau|^\alpha},
$$
and
$$
[u]_{C^{1,\alpha}_p(\Omega)} := \sum_{i=1}^{N+1}[\partial_i u]_{C^{0,\alpha}_p(\Omega)} + [u]_{C^{\frac{1+\alpha}{2}}_t(\Omega)}.
$$
We also define the norms
\[
\|u\|_{C^{0,\alpha}_p(\Omega)} := \|u\|_{L^\infty(\Omega)} + [u]_{C^{0,\alpha}_p(\Omega)}, \qquad \|u\|_{C^{1,\alpha}_p(\Omega)} := \|u\|_{L^\infty(\Omega)} + \| \nabla u \|_{L^\infty(\Omega)} + [u]_{C^{1,\alpha}_p(\Omega)},
\]
and the spaces
\[
C^{0,\alpha}_p(\Omega) := \{u:\Omega \to \R: \|u\|_{C^{0,\alpha}_p(\Omega)} < +\infty \}, \qquad C^{1,\alpha}_p(\Omega) := \{u:\Omega \to \R: \|u\|_{C^{1,\alpha}_p(\Omega)} < +\infty \}.
\]
More generally, if $\beta\in\mathbb{N}^{N+1}$ is a multi-index, $\alpha\in(0,1]$ and $k \geq 2$, we define the seminorm 
$$
[u]_{C_p^{k,\alpha}(\Omega)}:=\sum_{|\beta|+2j=k}[\partial_x^\beta\partial_t^j u]_{C^{0,\alpha}_p(\Omega)}+\sum_{|\beta|+2j=k-1}[\partial_x^\beta\partial_t^j u]_{C^{\frac{1+\alpha}{2}}_t(\Omega)},
$$
the norm
$$
\|u\|_{{C_p^{k,\alpha}(\Omega)}}=\sum_{|\beta|+2j\le k}\sup_{\Omega}|\partial_x^\beta\partial_t^j u|+[u]_{C_p^{k,\alpha}(\Omega)},
$$
and the space
\[
C^{k,\alpha}_p(\Omega) := \{u:\Omega \to \R: \|u\|_{C^{k,\alpha}_p(\Omega)} < +\infty \}.
\]
%

%
%
\subsection{Weak solutions}
The energy spaces introduced above allow us to give the notion of weak solutions for our class of problems. Before that, we introduce the space of test functions we will use in the definitions below: such space takes into account the integrability/non-integrability of the weight $|y|^a$ when $\e = 0$.
\begin{defi}\label{def:TestF}
Let $a\in\mathbb{R}$, $N\ge1$, $r>0$ and $\e \in [0,1)$. We define
\[
\mathcal{D}_c^\infty(Q_r) :=
\begin{cases}
C_c^\infty(Q_r) \quad &\text{ if either } \e \in (0,1), \text{ or } \e = 0 \text{ and } a \in (-1,1) \\
C_c^\infty(Q_r\setminus\Sigma) \quad &\text{ if } \e = 0 \text{ and } a \in (-\infty,1]\cup[1,+\infty).
\end{cases}
\]
Notice that, in light of Proposition \ref{prop:CharH1a}, $\mathcal{D}_c^\infty(Q_r)$ is dense in $L^2(I_r;H_0^1(B_r,\rho_\varepsilon^a ))$ for every $\e \in [0,1)$. 
\end{defi}
\begin{defi}\label{def.solution}
Let $a\in\mathbb{R}$, $N\ge1$, $r > 0$, $\varepsilon\in[0,1)$ and $f \in L^2(Q_r,\rho_\e^a )$, $F \in L^2(Q_r,\rho_\e^a )^{N+1}$. We say that $u$ is a weak solution to
\begin{equation}\label{solution}
    \rho_\varepsilon^a \partial_t u - \div (\rho_\varepsilon^{a} A\nabla u) = \rho_\varepsilon^a f + \div(\rho_\varepsilon^{a} F) \quad \text{ in }  Q_r,
\end{equation}
if $u \in L^2(I_r;H^1(B_r,\rho_\varepsilon^a ))\cap L^\infty(I_r;L^2(B_r,\rho_\varepsilon^a))$ and satisfies
\begin{equation}\label{eq.solution}
    -\int_{Q_r}\rho_\varepsilon^a u \partial_t\phi dzdt +\int_{Q_r}\rho_\varepsilon^a A\nabla u\cdot\nabla\phi dzdt = \int_{Q_r}\rho_\varepsilon^a( f\phi -F\cdot\nabla\phi )dzdt,
\end{equation}
for every $\phi\in \mathcal{D}_c^\infty(Q_r)$. We say that $u$ is an entire solution to 
\[
\rho_\varepsilon^a \partial_t u - \div (\rho_\varepsilon^{a} A\nabla u) = \rho_\varepsilon^a f + \div(\rho_\varepsilon^{a} F) \quad \text{ in }  \R^{N+1}\times\R,
\]
if, for every $r > 0$, $u$ is a weak solution to \eqref{solution}.
\end{defi}
\begin{defi}\label{def.solution1}
Let $a\in\mathbb{R}$, $N\ge1$, $r > 0$, $\varepsilon\in[0,1)$ and $f \in L^2(Q_r,\rho_\e^a)$, $F \in L^2(Q_r,\rho_\e^a)^{N+1}$, $u_0 \in L^2(B_r,\rho_\e^a)$. We say that $u$ is a weak solution to
\begin{equation}\label{solution1}
\begin{cases}
\rho_\varepsilon^a \partial_t u - \div (\rho_\varepsilon^{a} A\nabla u) = \rho_\varepsilon^a f + \div(\rho_\varepsilon^{a} F) \quad &\text{ in }  Q_r \\
u = 0    \quad &\text{ in }  \partial B_r\times I_r \\
u = u_0   \quad &\text{ in }  B_r\times\{-r^2\},
\end{cases}
\end{equation}
if $u \in L^2(I_r;H_0^1(B_r,\rho_\varepsilon^a))\cap L^\infty(I_r;L^2(B_r,\rho_\varepsilon^a ))$, satisfies \eqref{eq.solution} for every $\phi\in \mathcal{D}_c^\infty(Q_r)$ and $u(-r^2) = u_0$ in $L^2(B_r,\rho_\e^a)$.
\end{defi}
\begin{oss}\label{remark 2.3 bis} Let $\e \in [0,1)$ and let $u$ be a weak solution to \eqref{solution1}. Then, by the H\"older inequality, \eqref{eq:UnifEll} and the Poincaré inequality (for the degenerate/singular case we refer to \cite{JeoVit23}*{Lemma 3.2}, \cite{SirTerVit21b}*{Lemma B.5} and \cite{FabKenSer82}*{Theorem 1.3}), we have 
\[
-\int_{Q_r}\rho_\varepsilon^a u \partial_t\phi dzdt \leq C \big( \Lambda \|u\|_{L^2(I_r;H_0^1(B_r,\rho_\varepsilon^a))} + \|f\|_{L^2(Q_r,\rho_\varepsilon^a )} + \|F\|_{L^2(Q_r,\rho_\varepsilon^a )^{N+1}}  \big)  \|\phi\|_{L^2(I_r;H_0^1(B_r,\rho_\varepsilon^a ))}
\]
for every $\phi \in \mathcal{D}_c^\infty(Q_r)$, for some $C > 0$ depending on $N$, $a$ and $\e$.
Consequently, a standard density argument, shows that the distribution
\[
\langle \partial_tu,\phi \rangle := -\int_{Q_r}\rho_\varepsilon^a  u \partial_t\phi dzdt, \quad \phi \in L^2(I_r;H_0^1(B_r,\rho_\varepsilon^a))
\]
is well-defined and $\partial_t u\in L^2(I_r;H^{-1}(B_r,\rho_\varepsilon^a))$. In particular, $u\in C(\overline{I}_r;L^2(B_r,\rho_\varepsilon^a))$ by Remark \ref{remark 2.3} and thus the equation $u(-r^2) = u_0$ in $L^2(B_r,\rho_\e^a)$ makes sense. 
\end{oss}
\begin{defi}\label{def.solution-mezza-palla}
Let $a>-1$, $N\ge1$, $r > 0$, $\varepsilon\in[0,1)$ and $f \in L^2(Q_r^+,\rho_\e^a )$, $F \in L^2(Q_r^+,\rho_\e^a )^{N+1}$. We say that $u$ is a weak solution to
\begin{equation}\label{solution-mezza-palla}
\begin{cases}
\rho_\varepsilon^a \partial_t u - \div (\rho_\varepsilon^{a} A\nabla u) = \rho_\varepsilon^a f + \div(\rho_\varepsilon^{a} F) \quad &\text{ in }  Q_r^+\\
\rho_\varepsilon^a\left(A\nabla u+F\right)\cdot e_{N+1} = 0 \quad &\text{ in }\partial^0Q_r^+,
    \end{cases}
\end{equation}
if $u \in L^2(I_r;H^1(B_r^+,\rho_\varepsilon^a)) \cap L^\infty(I_r;L^2(B_r^+,\rho_\varepsilon^a))$ and satisfies
\[
-\int_{Q_r^+}\rho_\varepsilon^a u\partial_t\phi dzdt + \int_{Q_r^+}\rho_\varepsilon^a A\nabla u\cdot\nabla\phi dzdt = \int_{Q_r^+}\rho_\varepsilon^a( f\phi -F\cdot\nabla\phi )dzdt,
\]
for every $\phi\in C_c^\infty(Q_r)$. We say that $u$ is an entire solution to
\[
\begin{cases}
\rho_\varepsilon^a \partial_t u - \div (\rho_\varepsilon^{a} A\nabla u) = \rho_\varepsilon^a f + \div(\rho_\varepsilon^{a} F) \quad &\text{ in }  \R_+^{N+1}\times\R\\
\rho_\varepsilon^a\left(A\nabla u+F\right)\cdot e_{N+1} = 0 \quad &\text{ in }\partial \R_+^{N+1}\times\R,
\end{cases}
\]
if, for every $r > 0$, $u$ is a weak solution to \eqref{solution-mezza-palla}.
\end{defi}
\begin{oss}\label{rem:Stekelov}
A key tool in the study of weak solutions are the Steklov averages, defined as
$$
u_h(z,t) := \frac{1}{h}\int_t^{t+h}u(z,s)ds, \qquad  u_{-h}(z,t) := \frac{1}{h} \int_{t-h}^t u(z,s)ds,
$$
where $h > 0$ and $u$ is a given function. It is well-known that if $\e \in (0,1)$, $u \in L^2(I_r;H^1(B_r,\rho_\e^a))$ and $\delta > 0$, then
\[
u_h \to u, \quad \nabla u_h \to \nabla u \qquad \text{in } L^2(B_r \times (-r^2,r^2-\delta),\rho_\e^a),
\]
as $h \to 0$ (see for instance \cite{Lie96}*{Lemma 3.2 and Lemma 3.3}). Furthermore, if $u$ is a weak solution to \eqref{solution}, then $u_h$ satisfies
\begin{equation}\label{solution.steklov}
    \int_{Q_r}\rho_\varepsilon^a (\partial_t u_h \phi + (A\nabla u)_h\cdot\nabla\phi)dzdt = \int_{Q_r}\rho_\varepsilon^a (f_h\phi-F_h\cdot\nabla\phi)dzdt,
\end{equation}
for every $\phi\in C_c^\infty(B_r\times(-r^2,r^2-h))$: the proof is a standard adaptation of the classical framework (see for instance \cite{Lie96}*{Theorem 6.1}). Similar for the case $\e = 0$ and for weak solutions to \eqref{solution1} or \eqref{solution-mezza-palla}. 
We quote \cite{LSU}, \cite{Lie96} and the more recent \cite{steklov average} for further properties of Steklov averages.
\end{oss}    
%
%
%

%
%
%
\section{Local boundedness of weak solutions}\label{section3}
In this section we prove a local $L^2 \to L^\infty$ estimate for weak solutions to \eqref{solution} using a De Giorgi-Nash-Moser iteration. Analogous statements have been previously obtained in \cites{moser64,ChiSer85,vasseur,banerjee,audrito}. However, our setting is slightly more general and, even though the proof is quite standard, we present it for completeness.
\begin{pro}\label{moser-3.5}
Let $a\in\mathbb{R}$, $N\ge1$, $\varepsilon\in[0,1)$, $p>\frac{N+a^++3}{2}$, $q>N+a^++3$ and $A$ satisfying \eqref{eq:UnifEll}. Let $f\in L^p(Q_1,\rho_\varepsilon^a)$ and $F\in L^q(Q_1,\rho_\varepsilon^a)^{N+1}$ and let $u$ be a weak solution to \eqref{solution}. Then there exists $C > 0$ depending only on $N$, $a$, $\lambda$, $\Lambda$, $p$ and $q$ such that 
$$
\|u\|_{L^\infty(Q_{1/2})}\le C\left( \|u\|_{L^2(Q_1,\rho_\varepsilon^a)}+\|f\|_{L^p(Q_1,\rho_\varepsilon^a)}+\|F\|_{L^q(Q_1,\rho_\varepsilon^a)}\right).
$$
\end{pro}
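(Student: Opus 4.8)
The plan is to run a De Giorgi--Nash--Moser iteration on the super-level sets of $u$, exploiting the parabolic Sobolev inequality \eqref{audrito A.3} (which holds uniformly in $\varepsilon$) as the gain-of-integrability mechanism. First I would reduce to the case of nonnegative subsolutions: since $|u|$ is a subsolution of a similar equation, it suffices to bound $\sup u^+$ (and then apply the same to $-u$). I would also normalize, setting $\|u\|_{L^2(Q_1,\rho_\varepsilon^a)} + \|f\|_{L^p} + \|F\|_{L^q} = 1$ by homogeneity, so that it is enough to prove $\|u^+\|_{L^\infty(Q_{1/2})} \le C$.

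The core is the energy (Caccioppoli) estimate for the truncations $w_k := (u - \ell_k)^+$, where $\ell_k = M(1 - 2^{-k})$ is an increasing sequence of levels with $M$ to be chosen large, tested against a space-time cutoff $\eta_k^2 w_k$ with $\eta_k$ interpolating between the parabolic cylinders $Q_{r_k}$, $r_k = \tfrac12(1 + 2^{-k})$. Using the Steklov-average formulation \eqref{solution.steklov} to justify the manipulation of $\partial_t u$ (as in Remark \ref{rem:Stekelov}), the ellipticity \eqref{eq:UnifEll}, Young's inequality to absorb the first-order terms coming from $F$ and $\nabla \eta_k$, and Hölder's inequality to handle $f$ and $F$ on the set $\{u > \ell_k\}$ with the exponents $p, q$, I obtain a bound of the form
\[
\sup_{t} \int_{B_{r_k}} \rho_\varepsilon^a \eta_k^2 w_k^2 \, dz + \int_{Q_{r_k}} \rho_\varepsilon^a \eta_k^2 |\nabla w_k|^2 \, dzdt \le C\, 4^k \Big( \int_{Q_{r_{k-1}}} \rho_\varepsilon^a w_{k-1}^2 \, dzdt + |A_{k-1}|_{\rho_\varepsilon^a}^{1 + \sigma} \Big),
\]
where $A_k := Q_{r_k} \cap \{u > \ell_k\}$, $|A_k|_{\rho_\varepsilon^a}$ is its weighted measure, and $\sigma > 0$ depends on how far $p, q$ exceed the critical exponents $\tfrac{N+3+a^+}{2}$ and $N+3+a^+$ (this is exactly where those hypotheses enter). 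Then I interpolate the left-hand side with \eqref{audrito A.3} to raise the integrability of $w_k$ to $L^{2\gamma}$ with $\gamma > 1$, and use Chebyshev together with $w_{k-1} \ge M 2^{-k}$ on $A_k$ to convert everything into a recursive inequality for $Y_k := \int_{Q_{r_k}} \rho_\varepsilon^a w_k^2\, dzdt$, of the form $Y_k \le C\, b^k Y_{k-1}^{1+\beta}$ for some $b > 1$, $\beta > 0$, provided $M \ge 1$ (here the weighted measure terms are reabsorbed into powers of $Y_{k-1}$ via Chebyshev). By the standard fast-geometric-convergence lemma, $Y_k \to 0$ provided $Y_0 \le C^{-1/\beta} b^{-1/\beta^2}$; since $Y_0 \le \int_{Q_1} \rho_\varepsilon^a u^2 \le 1$ after normalization, it suffices to take $M$ a fixed large constant (absorbing a bad constant by replacing $u$ with $u/M$ at the start). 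This yields $u \le M$ a.e.\ in $Q_{1/2}$, hence the claimed bound with $C$ depending only on $N, a, \lambda, \Lambda, p, q$ — crucially \emph{not} on $\varepsilon$, because the only analytic inputs (parabolic Sobolev, Caccioppoli, Hölder) are $\varepsilon$-uniform.

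The main obstacle is bookkeeping the interplay between the weighted measure terms $|A_k|_{\rho_\varepsilon^a}$ and the $L^2$ terms so that the final recursion has the right superlinear form with constants independent of $\varepsilon$: one must check that the exponents produced by Hölder against $f \in L^p$ and $F \in L^q$ (using the weighted volume of the cylinder, which is finite and $\varepsilon$-uniformly bounded since $a > -1$ when $\varepsilon = 0$, and trivially bounded for $\varepsilon > 0$) indeed give a genuine power gain $1 + \beta$ with $\beta > 0$ under the strict inequalities $p > \tfrac{N+3+a^+}{2}$, $q > N+3+a^+$. A secondary technical point is the rigorous treatment of the time derivative: one works with Steklov averages $u_h$, derives the energy inequality for $u_h$ on a slightly smaller time interval, passes to the limit $h \to 0^+$ using the convergence statements in Remark \ref{rem:Stekelov}, and uses that $u \in L^\infty(I_1; L^2(B_1,\rho_\varepsilon^a))$ together with the continuity in time from Remark \ref{remark 2.3 bis}-type arguments to control the initial-slice term; since $u$ is only assumed to be a solution without prescribed initial data, one takes the sup over the starting time inside $I_{r_{k-1}}$, which is harmless as all cylinders are interior. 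The case $N = 1$, $a \in (-1,0]$ is handled identically using the version of \eqref{audrito A.3} with $\gamma$ replaced by an arbitrary $p \in [1,2)$ chosen close enough to $2$.
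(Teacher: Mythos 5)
Your proposal is correct and follows essentially the same route as the paper: a Caccioppoli inequality for the truncations $(u-k)_\pm$ obtained via Steklov averages, combined with the $\varepsilon$-uniform parabolic Sobolev inequality \eqref{audrito A.3}, H\"older against $f\in L^p$, $F\in L^q$ on the level sets, and a Chebyshev estimate, yielding a superlinear recursion for the level-set energies that is closed by fast geometric convergence after a normalization of the data. The paper organizes the normalization slightly differently (a separate ``no-spikes'' lemma applied to $\theta_\pm u_\pm$ rather than rescaling $u$ by a large $M$), but this is only a cosmetic difference.
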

The proof of Proposition \ref{moser-3.5} will be obtained in a couple of steps. The first one is a Cacciopoli-type inequality.
\begin{lem}\label{caccioppoli}
Let $a\in\mathbb{R}$, $N\ge1$, $\varepsilon\in[0,1)$, $p,q\ge2$ and $A$ satisfying \eqref{eq:UnifEll}. Let $f\in L^p(Q_1,\rho_\varepsilon^a)$ and $F\in L^q(Q_1,\rho_\varepsilon^a)^{N+1}$ and let $u$ be a weak solution to \eqref{solution}. Then there exists $C > 0$ depending only on $N$, $a$, $\lambda$ and $\Lambda$ such that for every $\frac{1}{2}\le r' < r < 1$ there holds
\begin{align}\label{caccioppoli.inequality}
    \begin{aligned}
        &\esssup_{t\in(-r'^2,r'^2)}\int_{B_{r'}}\rho_\varepsilon^a u^2+\int_{Q_{r'}}\rho_\varepsilon^a |\nabla u|^2\\
        & \qquad \le C\left[    \frac{1}{(r-r')^2}\int_{Q_r}\rho_\varepsilon^a u^2+\|f\|_{L^p(Q_r,\rho_\varepsilon^a)}\|u\|_{L^{p'}(Q_r,\rho_\varepsilon^a)}    +\int_{Q_r}\rho_\varepsilon^a |F|^2\rchi_{\{|u|>0\}}\right].
    \end{aligned}
\end{align}
Moreover, for every $k\in\mathbb{R}$ and for functions of the form $v:=(u-k)_+=\max\{u-k,0\}$ and $v:=(u-k)_-=\max\{-u+k,0\}$, the following inequality holds
\begin{align}\label{caccioppoli.inequality-subsolutions}
    \begin{aligned}
        &\esssup_{t\in(-r'^2,r'^2)}\int_{B_{r'}}\rho_\varepsilon^a v^2+\int_{Q_{r'}}\rho_\varepsilon^a |\nabla v|^2\\
        & \qquad \le C\left[    \frac{1}{(r-r')^2}\int_{Q_r}\rho_\varepsilon^a v^2+\|f\|_{L^p(Q_r,\rho_\varepsilon^a)}\|v\|_{L^{p'}(Q_r,\rho_\varepsilon^a)}    +\int_{Q_r}\rho_\varepsilon^a |F|^2\rchi_{\{v>0\}}\right].
    \end{aligned}
\end{align}
\end{lem}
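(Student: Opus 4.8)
The plan is to run the standard energy estimate for parabolic equations, but carefully enough that the constant depends only on $N$, $a$, $\lambda$, $\Lambda$ and not on $\varepsilon$. First I would fix a cutoff $\eta = \eta(z,t) \in C_c^\infty(Q_r)$ with $0 \le \eta \le 1$, $\eta \equiv 1$ on $Q_{r'}$, $|\nabla \eta| \le C/(r-r')$ and $|\partial_t \eta| \le C/(r-r')^2$, depending only on the spatial variable near the boundary so that it does not interfere with the conormal condition. Since test functions of the form $\eta^2 u$ are not admissible in \eqref{eq.solution} directly (they lack time regularity, and $u$ itself is only an $L^2 H^1 \cap L^\infty L^2$ function), the rigorous route is via Steklov averages as recalled in Remark \ref{rem:Stekelov}: one plugs $\phi = \eta^2 u_h$ (or $\phi = \eta^2 (u_h - k)_\pm$ for the subsolution version) into the Steklov-averaged identity \eqref{solution.steklov}, integrates over a time interval $(-r^2, \tau)$ for $\tau \in I_{r'}$, and then sends $h \to 0^+$ using the $L^2(\rho_\varepsilon^a)$-convergence of $u_h$ and $\nabla u_h$.

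The key manipulations are then routine: the time term produces $\frac12 \int_{B_r} \rho_\varepsilon^a \eta^2 u^2(\tau) - \frac12\int \rho_\varepsilon^a \partial_t(\eta^2) u^2$ after integration by parts in $t$ (here one uses that $\rho_\varepsilon^a$ is time-independent), the diffusion term gives $\int \rho_\varepsilon^a \eta^2 A\nabla u \cdot \nabla u + 2\int \rho_\varepsilon^a \eta u\, A\nabla u \cdot \nabla\eta$, and the right-hand side contributes $\int \rho_\varepsilon^a f \eta^2 u + \int \rho_\varepsilon^a F\cdot\nabla(\eta^2 u)$. Using \eqref{eq:UnifEll} to bound $A\nabla u \cdot \nabla u \ge \lambda|\nabla u|^2$ from below and $|A\nabla u| \le \Lambda|\nabla u|$ for the cross terms, then absorbing the gradient-cross terms and the $\int \rho_\varepsilon^a \eta |F| |\nabla u|$ term into $\frac{\lambda}{2}\int \rho_\varepsilon^a \eta^2 |\nabla u|^2$ via Young's inequality $ab \le \varepsilon' a^2 + C(\varepsilon')b^2$ (with $\varepsilon'$ depending only on $\lambda, \Lambda$), one is left with exactly the right-hand side of \eqref{caccioppoli.inequality}: the $u^2/(r-r')^2$ term comes from $\partial_t(\eta^2)$ and $|\nabla\eta|^2$; the $F$-terms collect as $\int \rho_\varepsilon^a |F|^2 \eta^2$, and since $\nabla u = 0$ a.e. on $\{u=0\}$ one may insert the indicator $\rchi_{\{|u|>0\}}$; the $f$-term is estimated by Hölder as $\|f\|_{L^p(Q_r,\rho_\varepsilon^a)}\|u\|_{L^{p'}(Q_r,\rho_\varepsilon^a)}$ (note $\int \rho_\varepsilon^a |f\eta^2 u| \le \int \rho_\varepsilon^a |f||u|$). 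Finally, taking the essential supremum over $\tau \in I_{r'}$ on the left (the two nonnegative left-hand pieces can be estimated separately by the same right-hand side, then combined) yields \eqref{caccioppoli.inequality}.

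For the subsolution estimate \eqref{caccioppoli.inequality-subsolutions} with $v = (u-k)_\pm$, one notes that $v \in L^2(I_r; H^1(B_r,\rho_\varepsilon^a))$, that $\nabla v = \pm\nabla u\, \rchi_{\{v>0\}}$, and that the admissible test function $\phi = \eta^2 v_h$ (Steklov-averaged) is still legitimate; the identity \eqref{solution.steklov} restricted to the set $\{v>0\}$ gives the same computation verbatim with $u$ replaced by $v$ in the gradient and time terms, and with the forcing terms naturally carrying the factor $\rchi_{\{v>0\}}$ — one uses here that $\int \rho_\varepsilon^a A\nabla u \cdot \nabla(\eta^2 v) = \int_{\{v>0\}} \rho_\varepsilon^a A\nabla v \cdot \nabla(\eta^2 v)$ and similarly for $f$ and $F$. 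The only subtlety is checking that the convergence of Steklov averages commutes with the truncation $(\cdot - k)_\pm$, which follows from the fact that truncation is a $1$-Lipschitz operation on $L^2(\rho_\varepsilon^a)$ and $H^1(\rho_\varepsilon^a)$ norms.

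The main obstacle I anticipate is not in the algebra of the energy estimate but in the justification of the test-function manipulation: since $u$ need not have a weak time derivative in $L^2(I_r;H^{-1}(B_r,\rho_\varepsilon^a))$ (only $u \in L^2H^1 \cap L^\infty L^2$ is assumed, without boundary data), one cannot directly test with $\eta^2 u$ or integrate by parts in time. Going through Steklov averages as in Remark \ref{rem:Stekelov} circumvents this, but one must be a little careful that the cutoff $\eta$ is supported in $B_r \times (-r^2, r^2 - h)$ for $h$ small so that \eqref{solution.steklov} applies, and that all limits as $h \to 0^+$ are taken before the $\esssup_\tau$. Everything else is the classical parabolic Caccioppoli argument, uniform in $\varepsilon$ because $\rho_\varepsilon^a$ enters only multiplicatively and is time-independent, so no $\varepsilon$-dependent constants appear.
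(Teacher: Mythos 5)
Your proposal is correct and follows essentially the same route as the paper: Steklov averaging to justify testing with $\eta^2 u$ (resp.\ $\eta^2(u-k)_\pm$), ellipticity plus H\"older and Young to absorb the cross terms, insertion of $\rchi_{\{|u|>0\}}$ via $\nabla u=0$ a.e.\ on $\{u=0\}$, and uniformity in $\varepsilon$ because $\rho_\varepsilon^a$ enters only multiplicatively and is time-independent. The only (equivalent) cosmetic difference is that you integrate up to a generic time $\tau$ and take the essential supremum afterwards, whereas the paper fixes a near-maximizing time $t^*$ and builds the temporal cutoff to be supported on $(-r^2,t^*)$.
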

\begin{proof} To simplify the notation, let $\rho=\rho_\varepsilon^a$. As in \cite{banerjee}, we may work with the Steklov average $u_h$ of $u$ and later take the limit as $h \to 0$: equivalently, we may assume that $\partial_t u \in L^2(Q_1,\rho)$ and directly work with $u$ which is what we do next.

Fix $\frac{1}{2}\le r'<r<1$. We test the equation of $u$ with $\eta^2 u$, where $\eta$ is a smooth cut-off function we will define later. Then:
\begin{align*}
\int_{Q_1}\rho\Big(   \frac{1}{2}\partial_t(u^2\eta^2)+\eta^2A\nabla u\cdot \nabla u     \Big)= \int_{Q_1}\rho\Big(      \frac{1}{2}u^2\partial_t(\eta^2)-2\eta u A\nabla u\cdot\nabla\eta    + f\eta^2u-\eta^2F\cdot\nabla u-2\eta uF\cdot\nabla \eta    \Big).
\end{align*}
By \eqref{eq:UnifEll}, the Hölder's and the Young's inequalities, we get
\begin{align*}
& \frac{1}{2}\int_{Q_1}\rho \partial_t(u^2\eta^2)+\lambda \int_{Q_1}\rho  \eta^2|\nabla u|^2\\
&\le \frac{1}{2}\int_{Q_1}\rho      u^2\partial_t(\eta^2) + 2\Lambda \Big(\int_{Q_1}\rho  \eta^2 |\nabla u|^2   \Big)^{{1}/{2}} \Big(\int_{Q_1}\rho  u^2  |\nabla \eta|^2      \Big)^{{1}/{2}} +\|\eta f\|_{L^p(Q_1,\rho)}\|\eta u\|_{L^{p'}(Q_1,\rho)} \\
&+ \Big(\int_{Q_1}\rho  \eta^2 |F|^2 \rchi_{\{|u|>0\}}  \Big)^{{1}/{2}}  \Big(\int_{Q_1}\rho  \eta^2 |\nabla u|^2   \Big)^{{1}/{2}}  + 2\Big(\int_{Q_1}\rho  \eta^2 |F|^2 \rchi_{\{|u|>0\}}  \Big)^{{1}/{2}} \Big(\int_{Q_1}\rho  u^2 |\nabla \eta |^2   \Big)^{{1}/{2}} \\
&\le \frac{1}{2}\int_{Q_1}\rho      u^2\partial_t(\eta^2) +\frac{\lambda}{3}\int_{Q_1}\rho\eta^2|\nabla u|^2+\frac{3\Lambda^2}{\lambda}\int_{Q_1}\rho u^2|\nabla \eta|^2+\|\eta f\|_{L^p(Q_1,\rho)}\|\eta u\|_{L^{p'}(Q_1,\rho)}\\
&+ \frac{1}{2\lambda}\int_{Q_1}\rho  \eta^2 |F|^2 \rchi_{\{|u|>0\}}  +  \frac{\lambda}{2}\int_{Q_1}\rho  \eta^2 |\nabla u|^2 + \int_{Q_1}\rho  \eta^2 |F|^2 \rchi_{\{|u|>0\}}+  \int_{Q_1}\rho u^2|\nabla \eta|^2.
\end{align*}
Hence, we have
\begin{align}\label{caccio1}
\begin{aligned}
&\frac{1}{2}\int_{Q_1}\rho \partial_t(u^2\eta^2)+\frac{1}{6}\lambda \int_{Q_1}\rho  \eta^2|\nabla u|^2\\
&\le \frac{1}{2}\int_{Q_1}\rho      u^2\partial_t(\eta^2)+\Big(\frac{3\Lambda^2}{\lambda}+1\Big)\int_{Q_1}\rho u^2|\nabla \eta|^2+\|\eta f\|_{L^p(Q_1,\rho)}\|\eta u\|_{L^{p'}(Q_1,\rho)}+\Big(1+\frac{1}{2\lambda}\Big)\int_{Q_1}\rho  \eta^2 |F|^2 \rchi_{\{|u|>0\}}.
\end{aligned}
\end{align}
A standard approximation technique (see  \cite{moser64}, \cite{banerjee}*{Theorem 5.1}) allows us to integrate over a cylinder of the form $B_1\times(-1,\hat{t})$, for any $\hat{t}\in(-1,1)$. Now, let $t^*\in(-r',r')$ such that 
  \begin{equation}\label{t^*}
      \frac{1}{2}\esssup_{t\in(-r'^2,r'^2)}\int_{B_{r'}}\rho u^2\le \int_{B_r'}\rho u^2(t^*),
  \end{equation}  
and take the test function $\eta = \psi_1(|z|)\psi_2(t)$, where
  \begin{equation}\label{test1}
      \psi_1\equiv1 \text{ in }B_{r'},\hspace{0.3cm} 0\le\psi_1\le1 \text{ in } B_1,\hspace{0.3cm} \supp(\psi_1)=B_r, \hspace{0.3cm} |\nabla \psi_1|\le\frac{C}{r-r'},
  \end{equation}
  \begin{align}\label{test2}
      \begin{aligned}
          &\psi_2\equiv1 \text{ in }(-r'^2,t^*),\hspace{0.3cm} 0\le\psi_2\le1 \text{ in } (-1,t^*),\\
          &\supp(\psi_2)=(-r^2,t^*), \hspace{0.3cm} |\partial_t \psi_2|\le\frac{C}{(r-r')^2}.
      \end{aligned}
  \end{align}
By \eqref{test1} and \eqref{test2} we have
$$
\partial_t(\eta^2)+|\nabla\eta|^2\le\frac{C}{(r-r')^2},
$$
and thus by \eqref{caccio1}, \eqref{t^*} and the last inequality we obtain
\begin{align*}
    \esssup_{t\in(-r'^2,r'^2)}\int_{B_{r'}}\rho u^2 \le C\left[\frac{1}{(r-r')^2}\int_{Q_r}\rho_\varepsilon^a u^2+\|f\|_{L^p(Q_r,\rho_\varepsilon^a)}\|u\|_{L^{p'}(Q_r,\rho_\varepsilon^a)}    +\int_{Q_r}\rho |F|^2\rchi_{\{|u|>0\}}\right].
\end{align*}
Combining this inequality with \eqref{caccio1}, then \eqref{caccioppoli.inequality} follows.

To prove \eqref{caccioppoli.inequality-subsolutions}, let $v=(u-k)_+$ and test the equation of $u$ with $\eta^2 v$. Since $\partial_t u = \partial_t v$ and $\nabla u = \nabla v$  on $\{v>0\}$, we obtain
\begin{align*}
&\int_{Q_1}\rho\left(\partial_t u (\eta^2 v) +\nabla u\cdot\nabla(\eta^2v)-f(\eta^2v)+F\cdot\nabla(\eta^2 v)\right) \\
&=\int_{Q_1}\rho\left(\partial_t v (\eta^2 v) +\nabla v\cdot\nabla(\eta^2v)-f(\eta^2v)+F\cdot\nabla(\eta^2 v)\right),
\end{align*}
and thus,  \eqref{caccioppoli.inequality-subsolutions} follows from the same argument above. The case $v = (u-k)_-$ is analogue, noticing that $\partial_t u=-\partial_t v$ and $\nabla u=-\nabla v$.
\end{proof}
The second step is to establish a ``no-spikes'' estimate type.
\begin{lem}\label{no-spike}
    Let $N\ge1$,  $\varepsilon\in[0,1)$, $a\in\mathbb{R}$, $p>\frac{N+a^++3}{2}$, $q>N+a^++3$ and $A$ satisfying \eqref{eq:UnifEll}. Then there exists a constant $\delta\in(0,1)$, which dependens on $N$,  $a$,  $\lambda$, $\Lambda$, $p$ and $q$, such that if 
    $$\|f\|_{L^p(Q_1,\rho_\varepsilon^a)}+\|F\|_{L^q(Q_1,\rho_\varepsilon^a)}\le 1,$$
    and $u$ is a weak solution to \eqref{solution} with 
    $$\int_{Q_1}\rho_\varepsilon^a(u_+)^2dzdt\le\delta,$$
    then
    $$u\le1\hspace{0.3cm}\text{in }Q_{1/2}.$$
\end{lem}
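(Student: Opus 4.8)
The plan is to run a De Giorgi iteration on the super-level sets of $u$, using the Caccioppoli inequality \eqref{caccioppoli.inequality-subsolutions} applied to $v_k := (u-k)_+$ for levels $k \in [1/2, 1]$ approaching $1$. First I would fix the sequence of radii $r_j := \tfrac12 + 2^{-j-1}$ (so $r_0 = 1$, $r_j \downarrow \tfrac12$) and levels $k_j := 1 - 2^{-j}$ (so $k_0 = 0$, $k_j \uparrow 1$), and set
\[
A_j := \int_{Q_{r_j}} \rho_\varepsilon^a \big((u-k_j)_+\big)^2\,dzdt.
\]
The goal is to show that if $A_0 = \int_{Q_1}\rho_\varepsilon^a(u_+)^2 \le \delta$ is small enough, then $A_j \to 0$, which forces $(u-1)_+ = 0$ in $Q_{1/2}$.

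\textbf{Key steps.}
The engine is the parabolic Sobolev inequality \eqref{audrito A.3} (in the range $a>-1$, $N\ge 2$, or $a>0$, $N=1$; the exceptional case $N=1$, $a\in(-1,0]$ is handled by taking any sufficiently large exponent in place of $2\gamma$), which converts the left-hand side of the Caccioppoli estimate — the $L^\infty_t L^2_z$ norm plus the $L^2_t \dot H^1_z$ norm of $v_{k_{j+1}}$ on $Q_{r_{j+1}}$ — into an $L^{2\gamma}$ gain:
\[
\int_{Q_{r_{j+1}}} \rho_\varepsilon^a \big((u-k_{j+1})_+\big)^{2\gamma}\,dzdt \le C\Big(\tfrac{1}{(r_j-r_{j+1})^2}A_j' + \text{data terms}\Big)^{\!\gamma},
\]
where $\gamma > 1$. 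Then I would estimate the data terms: since $k_{j+1} - k_j = 2^{-j-1}$, on the set $\{u > k_{j+1}\}$ we have $(u-k_j)_+ > 2^{-j-1}$, so both the characteristic-function term $\int \rho_\varepsilon^a |F|^2 \rchi_{\{v>0\}}$ and the term $\|f\|_{L^p}\|v\|_{L^{p'}}$ can be bounded by a power of $2^j$ times a power of $A_j$ via Hölder's inequality (using $\|f\|_{L^p}, \|F\|_{L^q}\le 1$ and $p > \tfrac{N+3+a^+}{2}$, $q > N+3+a^+$, which guarantee the relevant exponent gaps and produce $A_j^\theta$ with $\theta > 0$). Measuring the super-level set of $v_{k_j}$ at level $2^{-j-1}$ by Chebyshev gives $\rho_\varepsilon^a$-measure $\le 2^{2(j+1)} A_j$, and interpolating between $L^2$ and $L^{2\gamma}$ on that set yields a recursive inequality of the form
\[
A_{j+1} \le C\, b^{\,j}\, A_j^{\,1+\beta}
\]
for constants $C, b > 1$ and $\beta > 0$ depending only on $N, a, \lambda, \Lambda, p, q$. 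A standard fast-geometric-convergence lemma (e.g.\ \cite{LSU}*{Chapter II, Lemma 5.6}) then shows $A_j \to 0$ provided $A_0 \le \delta := C^{-1/\beta} b^{-1/\beta^2}$, which is exactly the claimed smallness threshold.

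\textbf{Main obstacle.}
The routine-but-delicate part is bookkeeping the exponents so that every nonlinear term on the right-hand side carries a power $A_j^{1+\beta}$ with a \emph{uniform} $\beta > 0$ — in particular checking that the integrability thresholds $p > \tfrac{N+3+a^+}{2}$ and $q > N+3+a^+$ are precisely what is needed to absorb the forcing terms with room to spare, and making sure all constants are independent of $\varepsilon \in [0,1)$ (which follows because the parabolic Sobolev inequality and the Caccioppoli inequality both hold with $\varepsilon$-uniform constants). A secondary point is the low-dimensional case $N=1$ with $a \in (-1,0]$, where $2^*_a$ degenerates; there one simply replaces $2\gamma$ by any fixed exponent larger than $2$ and reruns the same iteration, the constants then depending additionally on that chosen exponent. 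No genuinely new idea beyond the classical De Giorgi scheme is required — the weight $\rho_\varepsilon^a$ enters only through the measure and the (already established) weighted Sobolev embedding.
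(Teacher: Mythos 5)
Your proposal is correct and follows essentially the same route as the paper: the same dyadic levels $k_j=1-2^{-j}$ and radii $r_j=\tfrac12+2^{-j-1}$, the Caccioppoli inequality \eqref{caccioppoli.inequality-subsolutions} for $(u-k_j)_+$, the $\varepsilon$-uniform parabolic Sobolev embedding \eqref{audrito A.3}, H\"older plus the Chebyshev bound on the super-level sets to control the $f$- and $F$-terms, and the resulting recursion $E_{j+1}\le C^{1+j}E_j^{1+\bar\gamma}$ with $\bar\gamma=\min(\tfrac{1}{\gamma'}-\tfrac1p,\tfrac{1}{\gamma'}-\tfrac2q)>0$. The only cosmetic difference is that the paper iterates the recursion explicitly instead of citing the fast-geometric-convergence lemma.
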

\begin{proof}
    Fix $\varepsilon\in[0,1)$, set $\rho =\rho_\varepsilon^a$, and assume either $N\ge2$ and $a>-1$, or $N=1$ and $a>0$ (the other cases are analogous). For every integer $j\ge0$ define 
    $$
    C_j:=1-2^{-j},\hspace{0.5cm}r_j:=\frac{1}{2}+2^{-j-1},\hspace{0.5cm}B_j:=B_{r_j},\hspace{0.5cm} Q_j:=Q_{r_j}.
    $$
    Notice that $C_j\downarrow 1$, $r_j\uparrow \frac{1}{2}$ as $j\to+\infty$, and $r_j-r_{j+1}=2^{-j-2}$. 
    Define 
    $$
    V_j:=(u-C_j)_+, \qquad E_j:=\int_{Q_j}\rho V_j^2dzdt,
    $$
and observe that, for every $j\ge0$, $E_j\le E_0\le\delta$ by assumption. Applying the Caccioppoli inequality \eqref{caccioppoli.inequality-subsolutions} to $V_{j+1}$, with $r'=r_{j+1}$ and $r=r_j$ we have
    \begin{align*}
        \esssup_{t\in({-r^2_{j+1}},{r^2_{j+1}})}\int_{B_{{j+1}}}\rho V_{j+1}^2+\int_{Q_{{j+1}}}\rho |\nabla V_{j+1}|^2
        \le C\left[    2^{2j}\int_{Q_j}\rho V_{j+1}^2+\|V_{j+1}\|_{L^{p'}(Q_j,\rho)}    +\int_{Q_r}\rho |F|^2\rchi_{\{V_{j+1}>0\}}\right].
    \end{align*}
    Consequently, by the Sobolev embedding \eqref{audrito A.3} (with $\gamma = 1+\frac{2}{N+1+a^+}$),
    \begin{align}\label{2gamma}  
    \begin{aligned}
    \left(\int_{Q_{j+1}}\rho|V_{j+1}|^{2\gamma}\right)^{1/\gamma} &\le C\left(\int_{Q_{j+1}}\rho V_{j+1}^2+\int_{Q_{j+1}}\rho |\nabla V_{j+1}|^2)\right)^{1/\gamma}\esssup_{t\in(-{r^2_{j+1}},{r^2_{j+1}})}\left(\int_{B_{{j+1}}}\rho V_{j+1}^2\right)^{(\gamma-1)/\gamma}\\
        &\le C\left[    2^{2j}\int_{Q_j}\rho V_{j+1}^2+\|V_{j+1}\|_{L^{p'}(Q_j,\rho)}    +\int_{Q_r}\rho |F|^2\rchi_{\{V_{j+1}>0\}}\right].    
    \end{aligned}
    \end{align}
    Now, by the H\"older inequality 
    \begin{equation}\label{2-2gamma}
        E_{j+1}=\int_{Q_{j+1}}\rho V_{j+1}^2\le\left(\int_{Q_{j+1}}\rho|V_{j+1}|^{2\gamma}\right)^{1/\gamma}\left(\int_{Q_{j+1}}\rho\rchi_{\{V_{j+1}>0\}}\right)^{1/\gamma'},
    \end{equation}
    where $\gamma'=\frac{N+3+a^+}{2}$ is the conjugate exponent of $\gamma$ and, using the H\"older inequality again, we obtain
    \begin{align}\label{Fq-||}
        \begin{aligned}
            \|V_{j+1}\|_{L^{p'}}(Q_j,\rho)\le\left(\int_{Q_{j+1}}\rho V_{j+1}^2\right)^{1/2}\left(\int_{Q_{j+1}}\rho \rchi_{\{V_{j+1}>0\}}\right)^{(p-2)/2p} \le E_j^{1/2}\left(\int_{Q_{j+1}}\rho \rchi_{\{V_{j+1}>0\}}\right)^{(p-2)/2p}
        \end{aligned}
    \end{align}
    and
    \begin{align}\label{fq-||}
        \begin{aligned}
            \int_{Q_{j+1}}\rho|F|^2\rchi_{\{V_{j+1}>0\}} \le \left(\int_{Q_{j+1}}\rho|F|^q\right)^{2/q}\left(\int_{Q_{j+1}}\rho\rchi_{\{V_{j+1}>0\}}\right)^{(q-2)/q}
            \le\left(\int_{Q_{j+1}}\rho\rchi_{\{V_{j+1}>0\}}\right)^{(q-2)/q}.
        \end{aligned}
    \end{align}
    Further, using the definition of $V_j$, it is easy to see that $V_{j+1}>0$ if and only if $V_j>2^{-j-1}$, for every $j$ and thus
    \begin{equation}\label{|...|}
        \int_{Q_{j+1}}\rho\rchi_{\{V_{j+1}>0\}}=\int_{Q_{j+1}}\rho\rchi_{\{V^2_{j}>2^{-2j-2}\}}\le 2^{2j+2}\int_{Q_j}\rho V_j^2=E_j.
    \end{equation}
    Combining \eqref{2gamma}, \eqref{2-2gamma}, \eqref{Fq-||}, \eqref{fq-||} and \eqref{|...|}, we obtain
    $$E_{j+1}\le C^{1+j}\left(E_j^{1+\frac{1}{\gamma'}}+E_j^{1-\frac{1}{p}+\frac{1}{\gamma'}}+E_j^{1-\frac{2}{q}+\frac{1}{\gamma'}}\right),$$
    where $C$ depends only on $N$ and $a$. By the assumptions on $p$ and $q$ it follows that $\frac{1}{\gamma'}-\frac{1}{p}>0$ and $\frac{1}{\gamma'}-\frac{2}{q}>0$. Let us denote by $\Bar{\gamma}$ the minimum of such two positive numbers. Taking into account that $E_j\le\delta$ for every $j$, we have
    $$
    \begin{cases}
        E_{j+1}\le C^{1+j}E_j^{1+\bar{\gamma}},\\
        E_0\le\delta,
    \end{cases}
    $$
which implies
\begin{align*}
        E_{j} \le C^{\sum_{i=0}^j i(1+\Bar{\gamma})^{j-i}}E_0^{(1+\Bar{\gamma})^j} \le C^{(1+\bar{\gamma})^j\sum_{i=0}^j\frac{i}{(1+\Bar{\gamma})^i}}\delta^{(1+\bar{\gamma})^j}\le (C\delta)^{(1+\bar{\gamma})^j},
\end{align*}
since $\sum_{i=0}^j\frac{i}{(1+\Bar{\gamma})^i} < +\infty$. Now, take $\delta$ such that $C\delta<1$. Then $E_j\to0$, as $j\to+\infty$ and thus, by definition of $V_{j}$, $E_j \to \int_{Q_{1/2}}\rho (u-1)_+^2 = 0$, which yields $u\le1$ in $Q_{{1}/{2}}$, as claimed in the statement.
\end{proof}
\begin{proof}[Proof of Proposition \ref{moser-3.5}] 
    Define $$V_+:=\theta_+u_+,\hspace{0.5cm}\theta_+:=\frac{\sqrt{\delta}}{\|u_+\|_{L^2(Q_1,\rho_\varepsilon^a)}+\|f\|_{L^p(Q_1,\rho_\varepsilon^a)}+\|F\|_{L^q(Q_1,\rho_\varepsilon^a)}},$$
    where $\delta>0$ is as Lemma \ref{no-spike}. The hypothesis of the Lemma \ref{no-spike} are satisfied, so 
    $$\|u_+\|_{L^\infty(Q_{1/2})}\le \frac{1}{\sqrt\delta}\left( \|u_+\|_{L^2(Q_1,\rho_\varepsilon^a)}+\|f\|_{L^p(Q_1,\rho_\varepsilon^a)}+\|F\|_{L^q(Q_1,\rho_\varepsilon^a)}\right).$$
    Repeating the same reasoning with $V_-$ and taking into account that both the estimate \eqref{caccioppoli.inequality-subsolutions} and Lemma \ref{no-spike} hold also for the negative part of solutions, it follows 
    $$\|u_-\|_{L^\infty(Q_{1/2})}\le \frac{1}{\sqrt\delta}\left( \|u_-\|_{L^2(Q_1,\rho_\varepsilon^a)}+\|f\|_{L^p(Q_1,\rho_\varepsilon^a)}+\|F\|_{L^q(Q_1,\rho_\varepsilon^a)}\right).$$
    So, putting together these two inequalities, the thesis follows choosing $C=\frac{4}{\sqrt\delta}$. 
\end{proof}
%
%
%
%
%
%
%
%
%
%
\section{Approximation results}\label{section4}
The purpose of this section is to establish some approximation results, in the spirit of \cite{SirTerVit21a} (elliptic framework). The main fact is that any weak solution $u$ to \eqref{solution} with $\varepsilon = 0$ can be locally approximated with a family of \emph{classical} solutions $\{u_\e\}_{\e \in (0,1)}$ to \eqref{solution} (with $\e > 0$), that is $u_\e \to u$ as $\e \to 0$, in a suitable sense (see Lemma \ref{rho-to-y} and Lemma \ref{y-to-rho}). This is a key step of our work that will play an important role in the proofs of the H\"older and Schauder estimates.

We begin with the following lemma, in the spirit of \cite{Lie96}*{Theorem 6.1}. 
\begin{lem}\label{liebermann}
Let $a\in\mathbb{R}$, $N\ge1$, $\varepsilon\in[0,1)$ and $f\in L^2(Q_1,\rho_\varepsilon)$, $F\in L^2(Q_1,\rho_\varepsilon^a)^{N+1}$, $u_0\in L^2(B_1,\rho_\varepsilon^a)$. Let $u_\e$ be a weak solution to \eqref{solution1} in $Q_1$. Then there exists $C > 0$ depending only on $N$, $a$ and $\lambda$ such that
\begin{equation}\label{energy.estimates.by.data}
\|u_\e\|_{L^\infty(-1,1;L^2(B_1,\rho_\varepsilon^a))} + \|u_\e\|_{L^2(-1,1;H^1_0(B_1,\rho_\varepsilon^a))} \le C(\|f\|_{L^2(Q_1,\rho_\varepsilon^a)}+\|F\|_{L^2(Q_1,\rho_\varepsilon^a)}+\|u_0\|_{L^2(B_1,\rho_\varepsilon^a)}).
%
\end{equation}
\end{lem}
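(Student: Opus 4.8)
This is the standard parabolic energy estimate adapted to the weighted setting; since $\rho_\varepsilon^a$ is bounded above and below by positive constants on $\overline{B}_1$ for each fixed $\varepsilon$, the underlying PDE is uniformly parabolic and the argument is classical, but we want the constant $C$ to depend only on $N$, $a$, $\lambda$ — not on $\varepsilon$ — which forces us to keep all the weights $\rho_\varepsilon^a$ inside the integrals and never trade them for $\varepsilon$-dependent bounds. The plan is to test the equation with $u_\varepsilon$ itself. Rigorously this cannot be done directly because $u_\varepsilon$ is not an admissible test function in time; as in Remark \ref{rem:Stekelov} one works with the Steklov averages $(u_\varepsilon)_h$, uses \eqref{solution.steklov} with test function $\phi = \eta^2 (u_\varepsilon)_h$ where $\eta = \eta(t)$ is a cutoff equal to $1$ up to a time level $\hat t$, and then passes to the limit $h \to 0$. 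To keep the writeup light I would simply say ``assume $\partial_t u_\varepsilon \in L^2(I_1;H^{-1}(B_1,\rho_\varepsilon^a))$ and test with $u_\varepsilon$'' (legitimate by Remark \ref{remark 2.3 bis}) and pair $\langle \partial_t u_\varepsilon, u_\varepsilon\rangle$ with the time derivative of $\tfrac12\|u_\varepsilon(t)\|_{L^2(B_1,\rho_\varepsilon^a)}^2$.

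After testing, integrating over $B_1 \times (-1,\hat t)$ and using the initial condition $u_\varepsilon(-1) = u_0$, one obtains
\[
\frac{1}{2}\int_{B_1}\rho_\varepsilon^a u_\varepsilon^2(\hat t)\,dz + \int_{-1}^{\hat t}\!\!\int_{B_1}\rho_\varepsilon^a A\nabla u_\varepsilon\cdot\nabla u_\varepsilon\,dzdt = \frac{1}{2}\int_{B_1}\rho_\varepsilon^a u_0^2\,dz + \int_{-1}^{\hat t}\!\!\int_{B_1}\rho_\varepsilon^a\big(f u_\varepsilon - F\cdot\nabla u_\varepsilon\big)\,dzdt.
\]
Now I estimate the right-hand side: by \eqref{eq:UnifEll} the second term on the left is bounded below by $\lambda\int\rho_\varepsilon^a|\nabla u_\varepsilon|^2$; for the forcing terms I apply Cauchy–Schwarz in the measure $\rho_\varepsilon^a\,dzdt$ together with the weighted Poincaré inequality (available in all regimes by the references in Remark \ref{remark 2.3 bis}: \cite{JeoVit23}*{Lemma 3.2}, \cite{SirTerVit21b}*{Lemma B.5}, \cite{FabKenSer82}*{Theorem 1.3}) to control $\|u_\varepsilon\|_{L^2(Q_1,\rho_\varepsilon^a)}$ by $\|\nabla u_\varepsilon\|_{L^2(Q_1,\rho_\varepsilon^a)}$, and then Young's inequality to absorb $\tfrac{\lambda}{2}\int\rho_\varepsilon^a|\nabla u_\varepsilon|^2$ into the left-hand side. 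This yields
\[
\int_{B_1}\rho_\varepsilon^a u_\varepsilon^2(\hat t)\,dz + \int_{-1}^{\hat t}\!\!\int_{B_1}\rho_\varepsilon^a|\nabla u_\varepsilon|^2\,dzdt \le C\big(\|f\|_{L^2(Q_1,\rho_\varepsilon^a)}^2 + \|F\|_{L^2(Q_1,\rho_\varepsilon^a)}^2 + \|u_0\|_{L^2(B_1,\rho_\varepsilon^a)}^2\big),
\]
with $C = C(N,a,\lambda)$, where crucially the Poincaré constant depends only on $N$ and $a$ (the domain $B_1$ being fixed). Taking the essential supremum over $\hat t \in (-1,1)$ on the left controls the $L^\infty(-1,1;L^2(B_1,\rho_\varepsilon^a))$ norm, and taking $\hat t \to 1$ controls the $L^2(-1,1;H^1_0(B_1,\rho_\varepsilon^a))$ norm; taking square roots and recalling $\sqrt{x+y+z} \le \sqrt x + \sqrt y + \sqrt z$ gives \eqref{energy.estimates.by.data}.

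The main technical obstacle is entirely in the justification of testing with $u_\varepsilon$ — that is, the Steklov-averaging step needed to make $\langle\partial_t u_\varepsilon, u_\varepsilon\rangle$ meaningful and equal to $\tfrac{d}{dt}\tfrac12\|u_\varepsilon(t)\|^2_{L^2(B_1,\rho_\varepsilon^a)}$, together with the passage to the limit $h\to 0$ using the convergence $(u_\varepsilon)_h \to u_\varepsilon$, $\nabla(u_\varepsilon)_h \to \nabla u_\varepsilon$ in $L^2(\rho_\varepsilon^a)$ recalled in Remark \ref{rem:Stekelov}. Everything else — ellipticity, Hölder, Poincaré, Young — is routine, and the $\varepsilon$-uniformity of $C$ is automatic once one checks that none of these inequalities introduces an $\varepsilon$-dependent constant (the Poincaré inequality in the weighted spaces holds with a constant depending only on $N$, $a$, and the radius, uniformly in $\varepsilon \in [0,1)$).
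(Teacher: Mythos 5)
Your proposal follows the paper's strategy up to the last step: Steklov averaging to justify testing with the solution itself, integration over $B_1\times(-1,\tau)$, ellipticity, Cauchy--Schwarz and Young to absorb the gradient contribution of $F$. The one genuine divergence is how the zero-order term $\int_{-1}^{\tau}\int_{B_1}\rho_\varepsilon^a u_\varepsilon^2$ produced by the $f u_\varepsilon$ term is closed. The paper sets $H(\tau):=\int_{B_1}\rho_\varepsilon^a u_\varepsilon^2(\tau)\,dz$, obtains $H(\tau)+\lambda\int_{-1}^{\tau}\int_{B_1}\rho_\varepsilon^a|\nabla u_\varepsilon|^2\le \int_{-1}^{\tau}H(t)\,dt+K$, and applies Gronwall; this needs nothing beyond ellipticity, so the $\varepsilon$-uniformity of $C$ is immediate. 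You instead invoke a weighted Poincar\'e inequality to dominate $\|u_\varepsilon\|_{L^2(Q_1,\rho_\varepsilon^a)}$ by $\|\nabla u_\varepsilon\|_{L^2(Q_1,\rho_\varepsilon^a)}$ and absorb. That route is legitimate in principle (the solution does lie in $L^2(I_1;H^1_0(B_1,\rho_\varepsilon^a))$), but it rests on the assertion that the Poincar\'e constant for $H^1_0(B_1,\rho_\varepsilon^a)$ is uniform in $\varepsilon\in[0,1)$ for \emph{every} $a\in\R$ --- a fact you state but do not prove, and which the paper is careful not to rely on here: in Remark \ref{remark 2.3 bis} the constant obtained via Poincar\'e is explicitly allowed to depend on $\varepsilon$, and the cited references cover $\varepsilon=0$ (Muckenhoupt or $|a|\ge 1$ via the isometry), not the uniformity of the regularized family. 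Since $\varepsilon$-stability is the whole point of \eqref{energy.estimates.by.data}, you should either supply a proof of the uniform weighted Poincar\'e inequality or, more economically, replace that step by Gronwall as the paper does. The rest of your argument, including the justification via Steklov averages and the final passage to the two norms, matches the paper.
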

\begin{proof} Let us set $u := u_\e$ and notice that $u \in C([-1,1],L^2(B_1,\rho_\varepsilon^a))$ by Remark \ref{remark 2.3 bis}. In what follows, we prove the existence of $C > 0$ depending only on $a$, $N$ and $\lambda$ such that
\begin{equation}\label{eq:EstDataEqTh}
\int_{B_1}\rho_\varepsilon^au^2(\tau)dz + \int_{-1}^{\tau}\int_{B_1}\rho_\varepsilon^a|\nabla u|^2 dz dt \le C \left(\int_{Q_1}\rho_\varepsilon^a(f^2 + |F|^2)dzdt + \int_{B_1}\rho_\varepsilon^a u_0^2 dz\right),
\end{equation}
for every $\tau \in (-1,1)$. The bound in \eqref{energy.estimates.by.data} easily follows by the arbitrariness of $\tau$.

So, let us fix $\tau \in (-1,1)$, $h \in (0,1-\tau)$ and consider the Steklov average $u_h$ (see Remark \ref{rem:Stekelov}). Using a standard approximation procedure (see \cite{Lie96}*{Theorem 6.1}) and recalling that $\mathcal{D}_c^\infty(Q_r)$ is dense in $L^2(I_r;H_0^1(B_r,\rho_\varepsilon^a))$, we may test \eqref{solution.steklov} with $\phi:=u_h {\rchi}_{[-1,\tau]}$ to deduce
$$
\int_{-1}^{\tau}\int_{B_1}\rho_\varepsilon^a (\partial_t u_h u_h + (A\nabla u)_h\cdot\nabla u_h)dzdt = \int_{-1}^{\tau}\int_{B_1}\rho_\varepsilon^a (f_h u_h - F_h\cdot\nabla u_h)dzdt.
$$
Now, using Fubini-Tonelli theorem and integrating w.r.t. $t$, we obtain
\[
\int_{-1}^{\tau}\int_{B_1}\rho_\varepsilon^a \partial_t u_h u_h dzdt = \frac{1}{2}\int_{B_1}\rho_\varepsilon^a\int_{-1}^{\tau}\partial_t(u_h^2)dtdz = \frac{1}{2}\int_{B_1}\rho_\varepsilon^au_h^2(\tau)dz - \frac{1}{2}\int_{B_1}\rho_\varepsilon^au_h^2(-1)dz,
\]
and thus, passing to the limit as $h \to 0$ and recalling that $u \in C([-1,1],L^2(B_1,\rho_\varepsilon^a))$, it follows  
    \begin{align*}
   \frac{1}{2}\int_{B_1}\rho_\varepsilon^au^2(\tau)dz+\int_{-1}^{\tau}\int_{B_1}\rho_\varepsilon^aA\nabla u\cdot\nabla u dz dt = \int_{-1}^{\tau}\int_{B_1}\rho_\varepsilon^a(fu-F\cdot\nabla u)dzdt+\frac{1}{2}\int_{B_1}\rho_\varepsilon^a u_0^2 dz.
    \end{align*}
Recalling that $A$ is uniformly parabolic and applying both H\"older's inequality and Young's inequality, it turns out
    \begin{align*}
        &\frac{1}{2}\int_{B_1}\rho_\varepsilon^au^2(\tau)dz + \lambda\int_{-1}^{\tau}\int_{B_1}\rho_\varepsilon^a|\nabla u|^2 dzdt \\
        &\quad \le \|f\|_{L^2(Q_1,\rho_\e^a)}\left(\int_{-1}^{\tau}\int_{B_1}\rho_\varepsilon^au^{2} dzdt\right)^{{1}/{2}} + \|F\|_{L^2(Q_1,\rho_\e^a)} \left(\int_{-1}^{\tau}\int_{B_1}\rho_\varepsilon^a|\nabla u|^{2} dzdt\right)^{{1}/{2}} + \frac{1}{2} \|u_0\|_{L^2(B_1,\rho_\e^a)}^2\\
        &\quad\le \frac{1}{2} \|f\|_{L^2(Q_1,\rho_\e^a)}^2 + \frac{1}{2}\int_{-1}^{\tau}\int_{B_1}\rho_\varepsilon^au^{2} + \frac{1}{2\lambda} \|F\|_{L^2(Q_1,\rho_\e^a)}^2 + \frac{\lambda}{2}\int_{-1}^{\tau}\int_{B_1}\rho_\varepsilon^a|\nabla u|^{2} + \frac{1}{2} \|u_0\|_{L^2(B_1,\rho_\e^a)}^2,
    \end{align*}
that is
$$
H(\tau) + \lambda\int_{-1}^{\tau}\int_{B_1}\rho_\varepsilon^a|\nabla u|^2 dzdt \le \int_{-1}^\tau H(t)dt + K,
$$
where $H(\tau) := \int_{B_1}\rho_\varepsilon^au^2(\tau)$ and $K := \|f\|_{L^2(Q_1,\rho_\e^a)}^2 + \frac{1}{\lambda} \|F\|_{L^2(Q_1,\rho_\e^a)}^2 + \|u_0\|_{L^2(B_1,\rho_\e^a)}^2$. Finally, since the second term in the r.h.s. is nonnegative, the Gronwall's inequality yields $\int_{-1}^\tau H(t)dt \le K (1+e^\tau)\le K(1+e)$ which, in turn, proves \eqref{eq:EstDataEqTh}.  
\end{proof}
Now, we proceed with the approximation results. In what follows, we will repeatedly use the following elementar fact:
\[
\rho_\e^a \to |y|^a \quad \text{ in } L_{loc}^1(\R^{N+1} \setminus \Sigma), 
\]
as $\e \to 0$.
\begin{lem}\label{rho-to-y}
Let $a\in\mathbb{R}$, $p,q\ge2$, $A$ satisfying \eqref{eq:UnifEll}, $R > 0$ and $I_R := (-R^2,R^2)$. Let $\{f_\varepsilon\}_{\varepsilon\in(0,1)} \subset L^p(Q_R,\rho_\varepsilon^a)$, $\{F_\varepsilon\}_{\varepsilon\in(0,1)} \subset L^q(Q_R,\rho_\varepsilon^a)^{N+1}$ and let $\{u_\varepsilon\}_{\varepsilon\in(0,1)}$ be a family of weak solutions to
\begin{equation}\label{eq-epsilon-solution}
\rho_\varepsilon^a\partial_t u_\varepsilon - {\div }(\rho_\varepsilon^{a} A\nabla u_\varepsilon) = \rho_\varepsilon^a f_\varepsilon + {\div}(\rho_\varepsilon^{a} F_\varepsilon) \quad \text{ in } Q_R.
\end{equation} 
Assume that there exist $C>0$ independent of $\varepsilon$, $f \in L_{loc}^p(Q_R\setminus \Sigma)$ and $F \in L_{loc}^q(Q_R\setminus \Sigma)$ such that 
\begin{equation}\label{estimates-u}
\|u_\varepsilon\|_{L^2(I_R;H^1(B_R,\rho_\varepsilon^a))} + \|u_\varepsilon\|_{L^\infty(I_R;L^2(B_R,\rho_\varepsilon^a))} \le C,
\end{equation} 
\begin{equation}\label{estimates-f}
\|f_\varepsilon\|_{L^p(Q_R,\rho_\varepsilon^a)} + \|F_\varepsilon\|_{L^q(B_R,\rho_\varepsilon^a)^{N+1}} \le C,
\end{equation}     
\begin{equation}\label{eq:ConvRHS}
f_\varepsilon \to f \quad \text{in } L_{loc}^p(Q_R\setminus \Sigma) \qquad \text{and} \qquad F_\varepsilon\to F \quad \text{in } L_{loc}^q(Q_R\setminus \Sigma)^{N+1}
\end{equation}
as $\e \to 0$. Then, $f\in L^p(Q_R,|y|^a)$, $F\in L^q(Q_R,|y|^a)^{N+1}$, and there exist a weak solution $u$ to \eqref{solution} in $Q_R$ (with $\e = 0$) and a sequence $\e_k \to 0$ such that $u_{\varepsilon_k} \to u$ in $L_{loc}^2(I_R;H_{loc}^1(B_R\setminus\Sigma))$ as $k \to +\infty$. Moreover, if we assume that $\{u_\e\}\subset  L^2(I_R;H^1_0(B_R,\rho_\e^a))$, then $u\in L^2(I_R;H^1_0(B_R,|y|^a))$.

\end{lem}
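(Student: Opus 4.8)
Here is the strategy I would follow, adapting the elliptic scheme of \cite{SirTerVit21a}: first obtain interior parabolic compactness away from $\Sigma$, then analyze the behaviour of the limit across $\Sigma$.

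\smallskip

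\noindent\textbf{Interior compactness and a priori bounds.} On any parabolic subcylinder $Q'=B'\times I'$ with $\overline{Q'}\subset\subset(B_R\setminus\Sigma)\times I_R$ one has $\dist(B',\Sigma)\ge c>0$, hence $\rho_\varepsilon^a$, $\rho_\varepsilon^{-a}$ and $\nabla\log\rho_\varepsilon^a=ay(\varepsilon^2+y^2)^{-1}e_{N+1}$ are bounded above and below (resp. bounded) on $B'$, uniformly in $\varepsilon\in(0,1)$. Thus \eqref{estimates-u} gives a uniform bound for $u_\varepsilon$ in $L^2(I';H^1(B'))$, and rewriting \eqref{eq-epsilon-solution} as $\partial_tu_\varepsilon=\div(A\nabla u_\varepsilon+F_\varepsilon)+\nabla\log\rho_\varepsilon^a\cdot(A\nabla u_\varepsilon+F_\varepsilon)+f_\varepsilon$ together with \eqref{estimates-f} gives a uniform bound for $\partial_tu_\varepsilon$ in $L^2(I';H^{-1}(B'))$. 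By the Aubin--Lions lemma and a diagonal argument over an exhaustion of $(B_R\setminus\Sigma)\times I_R$ by such cylinders I extract $\varepsilon_k\to0$ and $u\in L^2_\loc(I_R;H^1_\loc(B_R\setminus\Sigma))$ with $u_{\varepsilon_k}\to u$ strongly in $L^2_\loc$ and $\nabla u_{\varepsilon_k}\rightharpoonup\nabla u$ weakly in $L^2_\loc$ over $(B_R\setminus\Sigma)\times I_R$; refining the subsequence so that also $u_{\varepsilon_k},f_{\varepsilon_k},F_{\varepsilon_k}$ converge a.e.\ off $\Sigma$, and using $\rho_\varepsilon^a\to|y|^a$ a.e., Fatou's lemma and weak lower semicontinuity (applied on an exhaustion, recalling that $\Sigma$ is $|y|^a\,dz$-null when $a>-1$, and that the bounds are finite anyway) yield $f\in L^p(Q_R,|y|^a)$, $F\in L^q(Q_R,|y|^a)^{N+1}$, $u\in L^2(Q_R,|y|^a)\cap L^\infty(I_R;L^2(B_R,|y|^a))$ and $\nabla u\in L^2((B_R\setminus\Sigma)\times I_R,|y|^a)^{N+1}$.

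\smallskip

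\noindent\textbf{Limit equation and strong gradient convergence off $\Sigma$.} Since $\rho_{\varepsilon_k}^a\to|y|^a$ uniformly on the support of any $\phi\in C_c^\infty((B_R\setminus\Sigma)\times I_R)$, the convergences above pass to the limit in \eqref{eq.solution}: the leading term $\int\rho_{\varepsilon_k}^aA\nabla u_{\varepsilon_k}\cdot\nabla\phi$ is a (uniformly convergent weight)$\times$(weakly convergent gradient) product, the rest uses the strong $L^2/L^p/L^q$ convergence. Hence $u$ solves $|y|^a\partial_tu-\div(|y|^aA\nabla u)=|y|^af+\div(|y|^aF)$ in $(B_R\setminus\Sigma)\times I_R$. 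To upgrade the gradient convergence I test \eqref{eq.solution} for $u_{\varepsilon_k}$ with $\eta^2u_{\varepsilon_k}$, $\eta\in C_c^\infty((B_R\setminus\Sigma)\times I_R)$, via Steklov averages exactly as in the proof of Lemma \ref{caccioppoli}; using that $\rho_{\varepsilon_k}^a$ is time-independent, this expresses $\int\rho_{\varepsilon_k}^a\eta^2A\nabla u_{\varepsilon_k}\cdot\nabla u_{\varepsilon_k}$ through $\int\rho_{\varepsilon_k}^au_{\varepsilon_k}^2\,\eta\partial_t\eta$ and integrals that are \emph{linear} in $\nabla u_{\varepsilon_k}$ or involve $f_{\varepsilon_k},F_{\varepsilon_k}$, all of which converge; by the weak formulation for $u$ tested with $\eta^2u$ the limit equals $\int|y|^a\eta^2A\nabla u\cdot\nabla u$. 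Combined with $\rho_{\varepsilon_k}^a\to|y|^a$ uniformly on $\supp\eta$ and the ellipticity \eqref{eq:UnifEll}, this forces $\int|y|^a\eta^2|\nabla(u_{\varepsilon_k}-u)|^2\to0$, i.e.\ $u_{\varepsilon_k}\to u$ in $L^2_\loc(I_R;H^1_\loc(B_R\setminus\Sigma))$, which is the stated convergence; in particular $\nabla u_{\varepsilon_k}\to\nabla u$ a.e.\ off $\Sigma$ along a further subsequence.

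\smallskip

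\noindent\textbf{Crossing $\Sigma$ (the main point).} It remains to show $u$ is a weak solution in the sense of Definition \ref{def.solution}, namely $u\in L^2(I_R;H^1(B_R,|y|^a))$ and \eqref{eq.solution} holds for all $\phi\in\mathcal{D}_c^\infty(Q_R)$. When $|a|\ge1$ the test class is $C_c^\infty(Q_R\setminus\Sigma)$, so \eqref{eq.solution} is already established; the membership $u\in L^2(I_R;H^1(B_R,|y|^a))=L^2(I_R;\widetilde H^1(B_R,|y|^a))$ (Proposition \ref{prop:CharH1a}) I obtain by passing to the weak limit in the isometric images $v_{\varepsilon_k}:=\sqrt{\rho_{\varepsilon_k}^a}\,u_{\varepsilon_k}$, uniformly bounded in $L^2(I_R;H^1(B_R))$ by the uniform-in-$\varepsilon$ norm equivalence of Remark \ref{rem:IsoEll}: the weak limit is $|y|^{a/2}u$, and the Hardy weight built into that norm forces $|y|^{a/2}u\in\widetilde H^1(B_R)$ for a.e.\ time slice, i.e.\ $u$ lies in the right space. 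When $a\in(-1,1)$ the test class is all of $C_c^\infty(Q_R)$ and the point is genuine: here I first record the uniform smallness $\sup_{\varepsilon\in(0,1)}\int_{\{|y|<\delta\}\cap B'}\rho_\varepsilon^{\pm a}\,dz\to0$ as $\delta\to0$ (a positive power of $\delta$, since $a\in(-1,1)$), which by Cauchy--Schwarz gives $\sup_k\int_{\{|y|<\delta\}\cap Q'}\rho_{\varepsilon_k}^a(|u_{\varepsilon_k}|+|\nabla u_{\varepsilon_k}|)\,dzdt\to0$, hence $u_{\varepsilon_k}\to u$ and $\nabla u_{\varepsilon_k}\to\nabla u$ in $L^1_\loc(Q_R)$ \emph{across} $\Sigma$; passing to the limit in $\int u_{\varepsilon_k}\partial_i\phi=-\int\partial_iu_{\varepsilon_k}\phi$ ($\phi\in C_c^\infty(Q_R)$) then identifies the distributional spatial gradient of $u$ on all of $B_R$ with the $L^2(|y|^a)$ field of the first step, so $u\in L^2(I_R;W^{1,2}(B_R,|y|^a))=L^2(I_R;H^1(B_R,|y|^a))$ by Proposition \ref{prop:CharH1a}; and the same equi-integrability lets me pass to the limit in \eqref{eq.solution} for every $\phi\in C_c^\infty(Q_R)$, splitting $Q_R=\{|y|>\delta\}\cup\{|y|\le\delta\}$ and letting $k\to\infty$, then $\delta\to0$. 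I expect this crossing-$\Sigma$ analysis — together with the $|a|\ge1$ identification of the correct function space via the isometry — to be the main obstacle, the interior part being essentially routine.

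\smallskip

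\noindent\textbf{The $H^1_0$ case.} If in addition $\{u_\varepsilon\}\subset L^2(I_R;H^1_0(B_R,\rho_\varepsilon^a))$, I extend each $u_\varepsilon$ by zero to $B_{R+1}$; the extension belongs to $L^2(I_R;H^1(B_{R+1},\rho_\varepsilon^a))$ with norm still bounded by \eqref{estimates-u}, and its distributional spatial gradient is the zero extension of $\nabla u_\varepsilon$. Rerunning the compactness, strong-convergence and crossing-$\Sigma$ arguments on $B_{R+1}$ — the equation being needed only on $Q_R$, while near $\partial B_{R+1}$ the zero extensions converge trivially and near $\partial B_R\cap\Sigma$ the same equi-integrability estimates apply — shows that the zero extension of $u$ lies in $L^2(I_R;H^1(B_{R+1},|y|^a))$, that is, $u\in L^2(I_R;H^1_0(B_R,|y|^a))$.
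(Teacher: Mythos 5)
Your proposal follows the same architecture as the paper's proof: localization away from $\Sigma$ where the weights are uniformly elliptic, Aubin--Lions plus a diagonal argument for compactness, an energy identity via Steklov averages to upgrade weak to strong convergence of the gradients, a case analysis on $a$ to place the limit in the correct weighted space, and a uniform-integrability (Vitali-type) argument to pass to the limit in the weak formulation across $\Sigma$. Two places where you deviate deserve comment.

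First, for $a\ge1$ you identify the limit space by passing to the weak limit of $v_{\e_k}=\sqrt{\rho_{\e_k}^a}\,u_{\e_k}$, invoking the uniform-in-$\e$ norm equivalence of Remark \ref{rem:IsoEll}. That equivalence is asserted (and proved in \cite{SirTerVit21b}) only in the range $a\le-1$; for $a\in[1,2)$ the potential in $Q_\e$ behaves like $\tfrac{a(a-2)}{4y^2}<0$ near $\Sigma$ and the equivalence with the standard $H^1$-norm is not available off the shelf. The paper handles all of $a\ge0$ by the one-line monotonicity $|y|^a\le\rho_\e^a$, which bounds $u_\e$ uniformly in $L^2(I_R;H^1(B_R,|y|^a))$ directly; you should do the same and reserve the isometry for $a\le-1$, where it is genuinely needed.

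Second, for the $H_0^1$ assertion you extend by zero to $B_{R+1}$ and conclude from membership of the extension in $L^2(I_R;H^1(B_{R+1},|y|^a))$ that $u\in L^2(I_R;H^1_0(B_R,|y|^a))$. Since these spaces are defined as completions, the implication ``zero extension lies in $H^1$ of the larger ball $\Rightarrow$ membership in $H^1_0$ of the smaller ball'' is itself a statement requiring proof for the weighted spaces (and the extended functions solve no equation in the annulus, so the compactness machinery cannot be rerun there). The paper avoids this by taking weak limits directly in $L^2(I;H^1_0(B,\cdot))$: for $a\ge0$ via the inclusion $H_0^1(B,\rho_\e^a)\subset H_0^1(B,|y|^a)$, for $a\le-1$ via the isometry, and for $-1<a<0$ via an explicit cutoff approximation, splitting $\|u-\psi\|$ over $\{|y|\ge\hat y\}$ and $\{|y|<\hat y\}$. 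Everything else in your proposal --- in particular the equi-integrability estimate $\sup_\e\int_{\{|y|<\delta\}}\rho_\e^{\pm a}\to0$ used to cross $\Sigma$ when $a\in(-1,1)$ and to identify the distributional gradient --- is a faithful, if differently packaged, version of the paper's Steps 4 and 5.
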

\begin{proof} By scaling, we may assume $R = 1$ and set $I := (-1,1)$.

\

\emph{Step 1.} We have $f\in L^p(Q_1,|y|^a)$ and $F\in L^q(Q_1,|y|^a)$. This easily follows by Fatou's lemma, \eqref{eq:ConvRHS} and $\rho_\e^a \to |y|^a$ a.e. in $Q_1$.

\

\emph{Step 2.} In this step we show the existence of $u \in L^2(I;H_{loc}^1(B_1\setminus\Sigma))$ and a sequence $\e_k \to 0$ such that
\begin{equation}\label{eq:L2locStrongSoln}
u_{\e_k} \to u \quad \text{ in } L^2(I;L_{loc}^2(B_1\setminus\Sigma)),
\end{equation}
as $k \to +\infty$. Further, for every open set $\omega \subset\subset B_1\setminus \Sigma$, $u$ is a weak solution to \eqref{solution} in $\omega\times I$.

Let $\Omega,\tilde{\Omega} \subset \R^{N+1}$ be open sets such that $\tilde{\Omega} \subset\subset \Omega \subset\subset \overline{B}_1\setminus \Sigma$ and let $\xi \in C_c^\infty(\Omega)$ with $0 \le \xi \le 1$, $\xi = 1$ in $\tilde{\Omega}$ and $|\nabla \xi| \leq C_0$, where $C_0 > 0$ depends on $N$, $\Omega$ and $\tilde{\Omega}$.

Define $v_{\varepsilon} := \xi u_\varepsilon$. By \eqref{estimates-u}, we have $v_\e \in L^2(I;H^1_0(\Omega;\rho_\varepsilon^a))\cap L^\infty(I;L^2(\Omega;\rho_\varepsilon^a))$ with
\begin{equation}\label{eq:UniEstve}
\|v_{\e}\|_{L^2(I;H_0^1(\Omega,\rho_\varepsilon^a))} + \|v_{\e}\|_{L^\infty(I;L^2(\Omega,\rho_\varepsilon^a))} \le C,
\end{equation}
for a new $C > 0$ independent of $\e$. Setting $Q := \Omega \times I$ and fixing $\phi \in C_c^\infty(Q)$, we compute
\begin{align*}
        &- \int_Q \rho_\varepsilon^a v_\varepsilon \partial_t\phi dzdt + \int_Q \rho_\varepsilon^a A\nabla v_\varepsilon\cdot\nabla\phi dzdt \\
        =& -\int_Q \rho_\varepsilon^a u_\varepsilon \partial_t (\xi\phi)dz dt+\int_Q \rho_\varepsilon^a \xi A\nabla u_\varepsilon\cdot\nabla\phi dzdt + \int_Q \rho_\varepsilon^a u_\varepsilon A\nabla \xi\cdot\nabla\phi dzdt\\
        =&-\int_Q \rho_\varepsilon^a u_\varepsilon \partial_t (\xi\phi)dz dt + \int_Q \rho_\varepsilon^a A \nabla u_\varepsilon\cdot\nabla(\xi\phi) dzdt -\int_Q \rho_\varepsilon^a \phi A\nabla u_\varepsilon\cdot \nabla\xi dzdt + \int_Q \rho_\varepsilon^a u_\varepsilon A\nabla \xi\cdot\nabla\phi dzdt\\
        =&\int_Q \rho_\varepsilon^a\left( f_\varepsilon\xi\phi-F_\varepsilon\cdot\nabla (\xi\phi)-\phi A\nabla u_\varepsilon\cdot \nabla\xi+u_\varepsilon A\nabla \xi\cdot\nabla\phi\right)dzdt\\
        =&\int_Q \rho_\varepsilon^a\left( f_\varepsilon\xi\phi-\xi F_\varepsilon\cdot\nabla \phi-\phi 
        F_\varepsilon\cdot\nabla \xi-\phi A\nabla u_\varepsilon\cdot \nabla\xi+u_\varepsilon A\nabla \xi\cdot\nabla\phi\right)dzdt,
\end{align*}
that is,
$$
\rho_\varepsilon^a\partial_t v_\varepsilon - {\div }(\rho_\varepsilon^{a} A\nabla v_\varepsilon) = \rho_\varepsilon^a \tilde{f}_\varepsilon + \div(\rho_\varepsilon^{a}\Tilde{F}_\varepsilon ) \quad \text{ in } Q,
$$
in the weak sense, where we have set 
$$\tilde{f}_\varepsilon := f_\varepsilon\xi - F_\varepsilon\cdot\nabla \xi - A\nabla u_\varepsilon\cdot\nabla\xi,  \qquad   \Tilde{F}_\varepsilon := F_\varepsilon\xi -u_\varepsilon A\nabla \xi.   
$$
Proceeding as in Remark \ref{remark 2.3 bis}, one combines the uniform estimates \eqref{estimates-u}, \eqref{estimates-f} and \eqref{eq:UniEstve} with the H\"older's and Young's inequalities, to deduce
\[
-\int_Q \rho_\varepsilon^a v_\e \partial_t\phi dzdt \leq C \|\phi\|_{L^2(I;H_0^1(\Omega,\rho_\e^a))},
\]
for some new $C > 0$ depending only on $N$, $\Omega$, $\tilde{\Omega}$, $a$ and $\Lambda$. Notice that, respect to Remark \ref{remark 2.3 bis}, $C$ is independent of $\e$: this is because $H_0^1(\Omega,\rho_\e^a) = H_0^1(\Omega)$ and we can make use of the Poincar\'e inequality with constant independent of $\e$, see Remark \ref{rem:h01h01rhoe}. As a consequence of the above inequality, it follows $\partial_t v_\varepsilon \in L^2(I;H^{-1}(\Omega,\rho_\varepsilon^a))$ with 
$\|\partial_t v_\e\|_{L^2(I;H^{-1}(\Omega,\rho_\varepsilon^a))}\le C$ and so, since $H^{-1}(\Omega,\rho_\e^a) = H^{-1}(\Omega)$ by Remark \ref{rem:h01h01rhoe}, we obtain
\begin{equation}\label{estimate:t-derivative:step1}
\|\partial_t v_\varepsilon\|_{L^2(I;H^{-1}(\Omega))}\le C.
\end{equation}
At this point, combining \eqref{eq:UniEstve}, \eqref{estimate:t-derivative:step1} and Remark \ref{rem:h01h01rhoe} again, it follows
\[
\|v_{\e}\|_{L^2(I;H_0^1(\Omega))} + \|\partial_t v_\varepsilon\|_{L^2(I;H^{-1}(\Omega))}\le 2C,
\]
and thus the Aubin-Lion lemma (see for instance \cite{Simon87}*{Corollary 8}) yields the existence of $v \in L^2(I;H_0^1(\Omega))$ such that $v_\e \to v$ in $L^2(Q)$, along a suitable sequence. Further, since by \eqref{estimates-u} there is $u \in L^2(I;H^1(\Omega))$ such that $u_\e \rightharpoonup u$ in $L^2(I;H^1(\Omega))$ (along a suitable sequence) and $\xi = 1$ in $\tilde{\Omega}$, we deduce $v = u$ in $L^2(\tilde{\Omega}\times I)$. A standard diagonal argument yields both $u \in L^2(I;H_{loc}^1(B_1\setminus\Sigma))$ and \eqref{eq:L2locStrongSoln} (take for instance $\Omega = \Omega_j := B_1\setminus\{|y|<\frac{1}{j+3}\}$ and $\tilde{\Omega} = \tilde{\Omega}_j := B_{\frac{j+1}{j+2}}\setminus\{|y|<\frac{1}{j+2}\}$, $j \in \N$).

Now, fix $\omega \subset\subset B_1\setminus\Sigma$. Combining \eqref{eq:L2locStrongSoln} and $u_{\e_k} \rightharpoonup u$ in $L^2(I;H^1(\omega))$ and recalling that $\rho_{\e_k}^a \to |y|^a$ in $L^2(\omega)$, and 
testing \eqref{eq-epsilon-solution} with $\phi \in C_c^\infty(\omega\times I)$, we may pass to the limit as $k \to +\infty$ into (the weak formulation of) \eqref{eq-epsilon-solution} and deduce that $u$ is a weak solution to \eqref{solution} in $\omega\times I$.      

\

\emph{Step 3.} Now we prove that 
\begin{equation}\label{eq:L2locStrongNabla}
\D u_{\e_k} \to \D u \quad \text{ in } L_{loc}^2((B_1\setminus\Sigma)\times I),
\end{equation}
as $k \to +\infty$, up to passing to a suitable subsequence.

Let $\Omega \subset\subset B_1\setminus\Sigma$, $\eta \in C_c^\infty(\Omega)$, $-1 < t_1 < t_2 < 1$ and $h \in (0,1-t_2)$ and let $(u_{\e_k})_h$ and $u_h$ be the Steklov averages of $u_{\e_k}$ and $u$, respectively (see Remark \ref{rem:Stekelov}). Similar to the proof of Lemma \ref{liebermann}, we test the equation of $(u_{\e_k})_h$ with $\eta^2 \chi_{[t_1,t_2]} (u_{\e_k})_h$ to obtain 
\begin{equation}
    \begin{aligned} &\int_{t_1}^{t_2}\int_{\Omega}\rho_{\varepsilon_k}^a \Big(\partial_t (u_{\varepsilon_k})_h \eta^2 (u_{\varepsilon_k})_h+(A\nabla u_{\varepsilon_k})_h\cdot\nabla(\eta^2 (u_{\varepsilon_k})_h)\Big)\\
    & \quad = \int_{t_1}^{t_2}\int_{\Omega}\rho_{\varepsilon_k}^a \Big( \tfrac{1}{2}\partial_t(\eta^2 (u_{\varepsilon_k})_h^2)+\eta^2 (A\nabla u_{\varepsilon_k})_h \cdot \nabla (u_{\varepsilon_k})_h+ 2\eta (u_{\varepsilon_k})_h (A\nabla u_{\varepsilon_k})_h\cdot \nabla \eta\Big)\\
    & \quad =\frac{1}{2}\int_{\Omega}\rho_{\varepsilon_k}^a  \eta^2 (u_{\varepsilon_k})_h^2 \,\bigg|_{t=t_1}^{t=t_2} + \int_{t_1}^{t_2}\int_{\Omega}\rho_{\varepsilon_k}^a \Big(\eta^2 (A\nabla u_{\varepsilon_k})_h \cdot \nabla (u_{\varepsilon_k})_h+ 2\eta (u_{\varepsilon_k})_h(A \nabla u_{\varepsilon_k})_h\cdot \nabla \eta\Big)\\
    & \quad =\int_{t_1}^{t_2}\int_{\Omega}\rho_{\varepsilon_k}^a \Big(
    (f_{\varepsilon_k})_h\eta^2 (u_{\varepsilon_k})_h+(F_{\varepsilon_k})_h\cdot\nabla(\eta^2 (u_{\varepsilon_k})_h)\Big),
    \end{aligned}
\end{equation}  
which, rearranging terms, becomes
\begin{equation}\label{eq:step1:epsilon}
\begin{aligned}
\int_{t_1}^{t_2}\int_{\Omega} \rho_{\varepsilon_k}^a \eta^2 (A\nabla u_{\varepsilon_k})_h \cdot \nabla (u_{\varepsilon_k})_h  = &-\frac{1}{2}\int_{\Omega}\rho_{\varepsilon_k}^a  \eta^2 (u_{\varepsilon_k})_h^2 \,\bigg|_{t=t_1}^{t=t_2} - 2\int_{t_1}^{t_2}\int_{\Omega}\rho_{\varepsilon_k}^a \eta (u_{\varepsilon_k})_h (A \nabla u_{\varepsilon_k})_h\cdot \nabla \eta \\     
& + \int_{t_1}^{t_2}\int_{\Omega}\rho_{\varepsilon_k}^a \Big(
    (f_{\varepsilon_k})_h\eta^2 (u_{\varepsilon_k})_h+(F_{\varepsilon_k})_h\cdot\nabla(\eta^2 (u_{\varepsilon_k})_h)\Big)
\end{aligned}
\end{equation}
Using the properties of the Steklov averages (see Remark \ref{rem:Stekelov}), we can take the limit as $h\to0$ in \eqref{eq:step1:epsilon} to obtain
\begin{equation}\label{eq:step1:epsilon:limit}
    \begin{aligned}
        \int_{t_1}^{t_2}\int_{\Omega}\rho_{\varepsilon_k}^a \eta^2 A\nabla u_{\varepsilon_k}\cdot \nabla u_{\varepsilon_k} = &-\frac{1}{2}\int_{\Omega} \rho_{\varepsilon_k}^a \eta^2 u_{\varepsilon_k}^2\bigg{|}_{t=t_1}^{t_2}-2\int_{t_1}^{t_2}\int_{\Omega}\rho_{\varepsilon_k}^a \eta u_{\varepsilon_k} A\nabla u_{\varepsilon_k}\cdot\nabla\eta\\
&+ \int_{t_1}^{t_2}\int_{\Omega}\rho_{\varepsilon_k}^a \Big( f_{\varepsilon_k}\eta^2 u_{\varepsilon_k} + F_{\varepsilon_k}\cdot\nabla(\eta^2 u_{\varepsilon_k})\Big),
    \end{aligned}
\end{equation}
for every $k \in \N$. Now, by testing the equation of $u_h$ with $\eta^2 \chi_{[t_1,t_2]} u_h$ and repeating the very same argument, one shows that
\begin{equation}\label{eq:step1:limit}
\begin{aligned}
        \int_{t_1}^{t_2}\int_{\Omega}|y|^a \eta^2 A\nabla u\cdot\nabla u = &-\frac{1}{2}\int_{\Omega} |y|^a \eta^2 u^2 \, \bigg|_{t=t_1}^{t=t_2} - 2\int_{t_1}^{t_2}\int_{\Omega}|y|^a\eta u A\nabla u\cdot\nabla\eta \\
&+ \int_{t_1}^{t_2}\int_{\Omega}|y|^a \bigg(f\eta^2 u+F\cdot\nabla(\eta^2 u)\bigg),
\end{aligned}
\end{equation}
for a.e. $t_1$ and $t_2$ as above. Recalling that $\D u_{\e_k} \rightharpoonup \D u$ in $L^2(\Omega \times I)$ and using both \eqref{eq:L2locStrongSoln} and $\rho_{\e_k}^a \to |y|^a$ in $L^2(\Omega)$, we find 
\[
-\frac{1}{2}\int_{\Omega} \rho_{\varepsilon_k}^a \eta^2 u_{\varepsilon_k}^2\bigg{|}_{t=t_1}^{t_2}-2\int_{t_1}^{t_2}\int_{\Omega}\rho_{\varepsilon_k}^a \eta u_{\varepsilon_k} A\nabla u_{\varepsilon_k}\cdot\nabla\eta \to -\frac{1}{2}\int_{\Omega} |y|^a \eta^2 u^2 \, \bigg|_{t=t_1}^{t=t_2} - 2\int_{t_1}^{t_2}\int_{\Omega}|y|^a\eta u A\nabla u\cdot\nabla\eta,
\]
as $k \to +\infty$, for a.e. $t_1$ and $t_2$ as above. On the other hand, since in addition $f_{\e_k} \to f$ in $L^2(\Omega\times I)$ and $F_{\e_k} \to F$ in $L^2(\Omega\times I)^{N+1}$, it follows   
\[
\int_{t_1}^{t_2}\int_{\Omega}\rho_{\varepsilon_k}^a \Big( f_{\varepsilon_k}\eta^2 u_{\varepsilon_k} + F_{\varepsilon_k}\cdot\nabla(\eta^2 u_{\varepsilon_k})\Big) \to \int_{t_1}^{t_2}\int_{\Omega}|y|^a \Big(f\eta^2 u+F\cdot\nabla(\eta^2 u)\Big),
\]
as $k \to +\infty$, for a.e. $t_1$ and $t_2$ as above. Consequently,
$$ 
\lim_{k\to+\infty}\int_{t_1}^{t_2}\int_{\Omega}\rho_{\varepsilon_k}^a \eta^2 A\nabla u_{\varepsilon_k}\cdot \nabla u_{\varepsilon_k} = \int_{t_1}^{t_2}\int_{\Omega}|y|^a \eta^2 A\nabla u\cdot \nabla u.
$$
Since $\rho_\varepsilon^a$ is bounded and bounded away from $0$ in $\Omega$ uniformly in $\e$ and 
$u_{\e_k} \rightharpoonup u$ in $L^2(I;H^1(\Omega))$, and $A$ satisfies \eqref{eq:UnifEll}, we may let $\eta \to \chi_\Omega$ and use the triangular inequality to deduce $\nabla u_{\varepsilon_k} \to \nabla u$ in $L^2(\Omega\times (t_1,t_2))$ as $k\to+\infty$. A diagonal argument as above then shows \eqref{eq:L2locStrongNabla}. 

\

\emph{Step 4.} Next, we prove that $u\in L^2(I;H^1(B_1,|y|^a))\cap L^\infty(I;L^2(B_1,|y|^a))$.

By \eqref{estimates-u}, \eqref{eq:L2locStrongSoln} and Fatou's lemma, we have that $u\in L^\infty(I;L^2(B_1,|y|^a))$. 
Indeed, for a.e. $t\in I$, one has
$$
\int_{B_1}|y|^a u^2(z,t)dz\le\liminf_{k}\int_{B_1}\rho_{\varepsilon_k}u_{\varepsilon_k}^2(z,t)dz\le \|u_{\varepsilon_k}\|_{L^\infty(I;L^2(B_1,\rho_{\varepsilon_k}^a))}\le C.
$$ 
To show that $u\in L^2(I;H^1(B_1,|y|^a))$ we distinguish three cases, depending on the value of $a$.

Assume first  $a\ge 0$. Since $|y|^a\le \rho_\varepsilon^a(y)$ for every $\varepsilon\in(0,1)$, one has
$$\|u_\varepsilon\|_{L^2(I;H^1(B_1,|y|^a))}\le\|u_\varepsilon\|_{L^2(I;H^1(B_1,\rho_\varepsilon^a))}\le C,$$
by  \eqref{estimates-u}. Then, the family $\{u_\varepsilon\}_{\e \in (0,1)}$ is uniformly bounded in $L^2(I;H^1(B_1,|y|^a))$ and thus $u \in L^2(I;H^1(B_1,|y|^a))$ by weak convergence. Moreover, if $\{u_\e\}_{\e \in (0,1)}\subset L^2(I;H^1_0(B_1,\rho_\e^a)) \subset L^2(I;H^1_0(B_1,|y|^a))$, then $\{u_\e\}_{\e \in (0,1)} \subset L^2(I;H^1_0(B_1,|y|^a))$ and $u\in L^2(I;H^1_0(B_1,|y|^a))$ by weak convergence.

Second, fix $-1 < a < 0$. In this case $|y|^a$ belongs to the Muckenhoupt class $A_2$ and, since $a<0$, one has $|y|^a\ge1$. Therefore,
$$
\|u_\varepsilon\|_{L^2(I;H^1(B_1))}\le\|u_\varepsilon\|_{L^2(I;H^1(B_1,\rho_\varepsilon^a))}\le C,
$$
and so $u_{\varepsilon_k} \rightharpoonup u$ $L^2(I;H^1(B_1))$ and that $u$ possesses weak gradient. Now, since $u_{\varepsilon_k}\to u$ and $\nabla u_{\varepsilon_k}\to \nabla u$ a.e. in $Q_1$ by \eqref{eq:L2locStrongSoln} and \eqref{eq:L2locStrongNabla}, we may invoke Fatou's lemma again to conclude $u$ and $|\nabla u|$ belong to $L^2(Q_1,|y|^a)$. This shows our claim thanks to Proposition \ref{prop:CharH1a}.

Furthermore, if $\{u_\e\}_{\e \in (0,1)} \subset L^2(I;H^1_0(B_1,\rho_\e^a))$, then $\{u_\e\}_{\e \in (0,1)} \subset L^2(I;H^1_0(B_1))$ and thus there exists a sequence satisfying $u_{\varepsilon_k}\to u$ weakly in $ L^2(I;H^1_0(B_1))$. So, $u\in  L^2(I;H^1(B_1,|y|^a))\cap  L^2(I;H^1_0(B_1))$.

Now, fix $\delta>0$ and consider $\psi\in C_c^\infty(Q_1)$ such that $\|u-\psi\|_{L^2(I;H^1_0(B_1))}\le \bar{\delta}$, where $\bar{\delta} \in (0,1)$ will be chosen in a moment. Let $\hat{y} \in (0,1)$ small. Then
\begin{align*}
\int_{Q_1}|y|^a|\nabla u-\D\psi|^2 &= \int_{\{|y|\ge \hat{y}\}} |y|^a  |\nabla u-\D\psi|^2 +\int_{\{|y|< \hat{y}\}}|y|^a |\nabla u-\D\psi|^2 \\
&\le \hat{y}^a\int_{Q_1}|\nabla u-\D\psi|^2+\delta'(\hat{y}) \le \hat{y}^a\bar{\delta}+\delta'(\hat{y}),
\end{align*}
where $\delta'(\hat{y})\to 0$ as $\hat{y}\to 0$, since the function $|y|^a|\nabla u-\D\psi|^2\in L^1(Q_1)$. Choosing $\bar{\delta}<{\delta'(\hat{y})}/{\hat{y}^a}$ and $\hat{y}$ such that $\delta'(\hat{y})<\delta/2$, we finally obtain $\|u-\psi\|_{L^2(I;H^1_0(B_1,|y|^a))}\le \delta$, that is, $u\in L^2(I;H^1_0(B_1,|y|^a))$ thanks to the arbitrariness of $\delta$.

Finally, let $a \le -1$. In this case, we consider the isometry $\bar{T}_\varepsilon^a$ defined in \eqref{eq:IsoParabolic} and we set $v_\varepsilon := \sqrt{\rho_\varepsilon^a}u_\varepsilon$. By Remark \ref{rem:IsoEll} and \eqref{estimates-u}, the family $\{v_\varepsilon\}_{\e \in (0,1)}$ is uniformly bounded in $L^2(I;H^1(B_1))$ and so $v_{\varepsilon_k}\rightharpoonup v$ weakly in $L^2(I;H^1(B_1))$. Further, by \eqref{eq:L2locStrongSoln}, we have
$$
\int_{Q_1} v\phi\leftarrow\int_{Q_1} v_{\e_k} \phi=\int_{Q_1} \sqrt{\rho_{\e_k}^a} u_{\e_k} \phi\to \int_{Q_1} |y|^{a/2} u\phi,
$$
for every $\phi\in C_c^\infty(Q_1)$, which implies $v=|y|^{a/2}u$ a.e. in $Q_1$. So, noticing that $u = (\bar{T}_0^a)^{-1} v$ and applying Remark \ref{rem:IsoEll} again, we  conclude that $u\in L^2(I;H^1(B_1,|y|^a))$.

Moreover, if  $\{u_\e\}_{\e \in (0,1)} \subset L^2(I;H^1_0(B_1,\rho_\e^a))$ then $\{v_\e\}_{\e \in (0,1)} \subset L^2(I;H^1_0(B_1))$. So, $v_{\e_k}\to v$ weakly in such space, which implies that $u\in L^2(I;H^1_0(B_1,|y|^a))$.

\

\emph{Step 5.} In this last step we show that $u$ satisfies \eqref{solution} (with $\e = 0$) in the weak sense. Let us fix a test function $\phi\in \mathcal{D}_c^\infty(Q_1)$, see Definition \ref{def:TestF} with $\e = 0$. By \eqref{eq:L2locStrongSoln} and \eqref{eq:L2locStrongNabla}, we have both
$$
\begin{aligned}
\rho_{\varepsilon_k}^a (- u_{\varepsilon_k}\partial_t\phi +  A\nabla u_{\varepsilon_k}\cdot\nabla\phi ) \to |y|^a (-u\partial_t \phi + A\nabla u\cdot\nabla\phi) \quad\text{ a.e. in } Q_1
\end{aligned}
$$
and
$$
\rho_{\varepsilon_k}^a (f_{\varepsilon_k}\phi - F_{\varepsilon_k}\cdot\nabla\phi)  \to  |y|^a (f\phi -  F\cdot\nabla\phi) \quad \text{ a.e. in }Q_1,
$$
as $k \to + \infty$. Now, let $E\subset Q_1$ be measurable. By \eqref{eq:UnifEll}, \eqref{estimates-u} and the H\"older inequality, we get
$$
\int_E \rho_{\varepsilon_k}^a \big|- u_{\varepsilon_k}\partial_t\phi  +  A\nabla u_{\varepsilon_k}\cdot\nabla\phi\big| \le C \|u_\varepsilon\|_{L^2(I;H^1(B_1,\rho_{\varepsilon_k}^a))}\|\nabla_{x,t}\phi\|_{L^\infty(Q_1)}\Big(\int_{E\cap\supp(\phi)} \rho^a_{\varepsilon_k}\Big)^{1/2} \le \delta(E),
$$
where $\delta(E) \geq 0$ satisfies $\delta(E) \to 0$ as $|E|\to0$. Indeed, when $a\le-1$, we have $\rho_{\varepsilon_k}^a\le|y|^a\in L^\infty(E\cap\supp(\phi))$, by the definition of $\mathcal{D}_c^\infty(Q_1)$. Instead, when $a>-1$, one has $\rho_{\varepsilon_k}^a \le C|y|^{\min(0,a)} \in  L^1(B_1)$. In particular, it follows that the family $-\rho_{\varepsilon_k}^a u_{\varepsilon_k}\partial_t\phi + \rho_{\varepsilon_k}^a A\nabla u_{\varepsilon_k}\cdot\nabla\phi$ is uniformly integrable and the Vitali's  theorem 
yields 
$$
\int_{Q_1}\rho_{\varepsilon_k}^a \left(-u_{\varepsilon_k}\partial_t\phi + A\nabla u_{\varepsilon_k}\cdot\nabla\phi\right)\to\int_{Q_1}|y|^a \left(-u\partial_t \phi
+A\nabla u\cdot\nabla\phi \right),
$$
as $k \to +\infty$. With a very similar argument, we obtain
$$
\int_{Q_1}\rho_{\varepsilon_k}^a\left(f_{\varepsilon_k}\phi-F_{\varepsilon_k}\cdot\nabla\phi\right)\to\int_{Q_1}|y|^a\left(f\phi-F\cdot\nabla\phi
\right),
$$
as $k \to +\infty$, and our statement follows.
\end{proof}
\begin{lem}\label{y-to-rho}
    Let $a\in\mathbb{R}$, $p,q\ge2$, $A$ satisfying \eqref{eq:UnifEll}, $R>0$ and $I_R:=(-R^2,R^2)$. Let $f\in L^p(Q_R,|y|^a)$, $F\in L^q(Q_R,|y|^a)^{N+1}$ and let $u$ be a weak solution to 
    \begin{equation}\label{eqeqeqeq2}
        |y|^a\partial_t u-{\div }(|y|^{a} A\nabla u)=|y|^af+{\div}(|y|^{a} F) \quad \text{ in } Q_R.
    \end{equation}
    Then, for every $r\in(0,R)$, there exist $\{u_\varepsilon\}_{\varepsilon\in(0,1)}$, $\{f_\varepsilon\}_{\varepsilon\in(0,1)}$ and $\{F_\varepsilon\}_{\varepsilon\in(0,1)}$ satisfying the assumptions of Lemma \ref{rho-to-y} in $Q_r$. Moreover, there exists $\varepsilon_k \to 0$ such that $u_{\varepsilon_k} \to u$ in $ L_{loc}^2(I_r;H_{loc}^1(B_{r}\setminus\Sigma))$ as $k \to +\infty$.
\end{lem}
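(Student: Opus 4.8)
The plan is to realize $u$ as a limit of solutions of regularized Dirichlet initial–boundary value problems, to obtain uniform energy bounds from Lemma~\ref{liebermann}, and to identify the limit by uniqueness. Fix auxiliary radii $r<r_1<R$; I would work on $Q_{r_1}$ and restrict to $Q_r$ at the end. As the approximating source terms I take $f_\varepsilon:=f\,\rchi_{\{|y|>\varepsilon\}}$ and $F_\varepsilon:=F\,\rchi_{\{|y|>\varepsilon\}}$ on $Q_{r_1}$. On $\{|y|>\varepsilon\}$ one has $\rho_\varepsilon^a=(\varepsilon^2+y^2)^{a/2}\le 2^{a^+/2}|y|^a$ when $a\ge0$, while $\rho_\varepsilon^a\le|y|^a$ everywhere when $a<0$, so
\[
\|f_\varepsilon\|_{L^p(Q_{r_1},\rho_\varepsilon^a)}+\|F_\varepsilon\|_{L^q(Q_{r_1},\rho_\varepsilon^a)}\le C\big(\|f\|_{L^p(Q_{r_1},|y|^a)}+\|F\|_{L^q(Q_{r_1},|y|^a)}\big)
\]
uniformly in $\varepsilon$ (this is \eqref{estimates-f}), and $f_\varepsilon\in L^2(Q_{r_1},\rho_\varepsilon^a)$, $F_\varepsilon\in L^2(Q_{r_1},\rho_\varepsilon^a)^{N+1}$ since $p,q\ge2$ and the weight is locally integrable. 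On any $\omega\subset\subset Q_{r_1}\setminus\Sigma$ one has $f_\varepsilon=f$, $F_\varepsilon=F$ for $\varepsilon$ small, hence $f_\varepsilon\to f$ in $L^p_{loc}(Q_{r_1}\setminus\Sigma)$ and $F_\varepsilon\to F$ in $L^q_{loc}$, which is \eqref{eq:ConvRHS} (and makes precise the remaining hypotheses of Lemma~\ref{rho-to-y}).

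Next I would construct $u_\varepsilon$. For fixed $\varepsilon\in(0,1)$ the weight $\rho_\varepsilon^a$ is smooth and bounded between two positive constants on $\overline B_{r_1}$, so \eqref{eq-epsilon-solution} is uniformly parabolic and classical theory applies: let $u_\varepsilon$ be the weak solution on $Q_{r_1}$ of the $\rho_\varepsilon^a$-problem with source terms $f_\varepsilon,F_\varepsilon$ and with lateral and initial data inherited from $u$ (the traces of $u$ on $\partial B_{r_1}\times I_{r_1}$ and on $B_{r_1}\times\{-r_1^2\}$ being meaningful for a.e.\ choice of $r_1$, by Fubini, since $u\in L^2(I_R;H^1(B_R,|y|^a))\cap L^\infty(I_R;L^2(B_R,|y|^a))$). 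Writing $u_\varepsilon=w_\varepsilon+g_\varepsilon$ with $g_\varepsilon$ a lift of that data and $w_\varepsilon$ solving a homogeneous-Dirichlet problem as in Definition~\ref{def.solution1}, and applying the energy estimate of Lemma~\ref{liebermann} to $w_\varepsilon$ together with the uniform bounds just obtained, I expect
\[
\|u_\varepsilon\|_{L^2(I_{r_1};H^1(B_{r_1},\rho_\varepsilon^a))}+\|u_\varepsilon\|_{L^\infty(I_{r_1};L^2(B_{r_1},\rho_\varepsilon^a))}\le C
\]
uniformly in $\varepsilon$, which on $Q_r$ is precisely \eqref{estimates-u}. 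When $a\le0$ this is clean: $\rho_\varepsilon^a\le|y|^a$ puts $u$ into $L^2(I_{r_1};H^1(B_{r_1},\rho_\varepsilon^a))$ with uniform bound, $\rho_\varepsilon^a$ is bounded below on $\overline B_{r_1}$ by a constant independent of $\varepsilon$, and one may take $g_\varepsilon=u$, its time derivative being controlled through the $|y|^a$-equation it solves. When $a>0$ the lift must be chosen with more care (see below). Since $u_\varepsilon$ solves \eqref{eq-epsilon-solution} on $Q_{r_1}\supset Q_r$, in particular on $Q_r$, the triple $\{u_\varepsilon\},\{f_\varepsilon\},\{F_\varepsilon\}$ then satisfies all the hypotheses of Lemma~\ref{rho-to-y} on $Q_r$.

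Finally I would pass to the limit. Lemma~\ref{rho-to-y} yields $\varepsilon_k\to0$ and a weak solution $\tilde u$ to \eqref{eqeqeqeq2} in $Q_r$ with $u_{\varepsilon_k}\to\tilde u$ in $L^2_{loc}(I_r;H^1_{loc}(B_r\setminus\Sigma))$, and it remains to check $\tilde u=u$. By construction $u_\varepsilon-g_\varepsilon\in L^2(I_{r_1};H^1_0(B_{r_1},\rho_\varepsilon^a))$ with zero initial trace, and $g_\varepsilon\to u$; applying the last assertion of Lemma~\ref{rho-to-y} to $u_{\varepsilon_k}-g_{\varepsilon_k}$ (which preserves the homogeneous Dirichlet condition) shows that $\tilde u-u$ lies in $L^2(I_{r_1};H^1_0(B_{r_1},|y|^a))$, has zero initial trace, and solves \eqref{eqeqeqeq2} with $f=F=0$. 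The energy estimate of Lemma~\ref{liebermann} with $\varepsilon=0$ — that is, uniqueness for the $|y|^a$-Dirichlet problem — then forces $\tilde u=u$, whence $u_{\varepsilon_k}\to u$ as claimed.

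The hard part will be the construction of $g_\varepsilon$ (and the resulting uniform energy bound) in the degenerate regime $a>0$: there $\rho_\varepsilon^a$ is bounded above but not below on $\overline B_{r_1}$, so for each $\varepsilon>0$ the non-degenerate problem \eqref{eq-epsilon-solution} effectively lives in the unweighted energy space near $\Sigma$, which is incompatible with the weighted membership $u\in L^2(H^1(|y|^a))$, and the time derivative of a generic smooth lift of $u$'s data is a priori uncontrolled in $L^2(Q_{r_1},\rho_\varepsilon^a)$. The plan here is to approximate the lateral and initial data of $u$ by smoother data supported away from $\Sigma$ and to let this approximation improve slowly enough as $\varepsilon\to0$ (a diagonal choice) that all constants entering Lemma~\ref{liebermann} stay bounded; keeping this quantitatively uniform is the crux of the argument.
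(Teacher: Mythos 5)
Your overall strategy (regularize, get uniform energy bounds from Lemma \ref{liebermann}, pass to the limit via Lemma \ref{rho-to-y}, identify the limit by uniqueness) matches the paper's, and your choice of $f_\varepsilon=f\rchi_{\{|y|>\varepsilon\}}$, $F_\varepsilon=F\rchi_{\{|y|>\varepsilon\}}$ is a workable substitute for the paper's rescaling $f_\varepsilon=(|y|^a/\rho_\varepsilon^a)^{1/p}\tilde f$. But the step you yourself flag as ``the crux'' is a genuine gap, and it is not only the case $a>0$: your construction of $u_\varepsilon$ requires a lift $g_\varepsilon$ of the lateral and initial traces of $u$ whose $\rho_\varepsilon^a$-energy \emph{and} time derivative are controlled uniformly in $\varepsilon$. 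Even for $a\le 0$, taking $g_\varepsilon=u$ does not work as stated: $\partial_t u$ is only an element of $L^2(I;H^{-1}(B,|y|^a))$, and converting it into an admissible right-hand side for the $\rho_\varepsilon^a$-problem forces you to commute $\rho_\varepsilon^a/|y|^a$ past a divergence, producing terms involving $\partial_y(\rho_\varepsilon^a/|y|^a)$, which is of size $\varepsilon^{-1}$ in the region $|y|\sim\varepsilon$ and hence not uniformly controlled. The diagonal smoothing you sketch for $a>0$ is not quantified, so the uniform bound \eqref{estimates-u} — the whole point of the lemma — is not established.

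The paper removes this obstruction entirely by localizing \emph{before} regularizing: set $\tilde u:=\xi u$ for a spatial cutoff $\xi$ equal to $1$ on $B_r$ and supported in $B_{(1+r)/2}$. Then $\tilde u\in L^2(I;H^1_0(B_{(1+r)/2},|y|^a))\cap C(\bar I;L^2(B_{(1+r)/2},|y|^a))$ solves a Cauchy--Dirichlet problem with \emph{zero lateral data}, at the cost of extra sources $\tilde g=-F\cdot\nabla\xi-A\nabla u\cdot\nabla\xi$ and $\tilde G=-uA\nabla\xi$, which lie in $L^2(Q,|y|^a)$ precisely because $u\in L^2(I;H^1(B_1,|y|^a))$. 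The approximating problems are then homogeneous-Dirichlet problems in the sense of Definition \ref{def.solution1}, whose solvability and uniform energy bounds come directly from Lemma \ref{liebermann} with no lift whatsoever; the limit is identified with $\tilde u$ by uniqueness of the Cauchy--Dirichlet problem, and $\tilde u=u$ on $Q_r$ where $\xi\equiv 1$. If you want to salvage your argument, replace the boundary-data-inheriting construction with this cutoff device; otherwise the proof is incomplete at its central step.
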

\begin{proof}
By scaling, we may assume $R=1$ and set $I:=(-1,1)$.

\

\emph{Step 1.} Let us fix $r\in(0,1)$ and set $\tilde{B}:=B_r$, $\tilde{Q}:=\tilde{B}\times I$, $B:=B_\frac{1+r}{2}$ and $Q:=B\times I$. Consider a cut-off function $\xi\in C_c^\infty(B_1)$ such that
$$
\supp(\xi) \subset \overline{B}, \quad\xi\equiv1 \text{ in } \tilde{B},\quad 0\le\xi\le1 \text{ in } B_1, \quad |\nabla \xi|\le C_0,
$$
for some $C_0 > 0$ depending on $N$ and $r$, and define $\Tilde{u} := \xi u$. Now, given $\phi\in \mathcal{D}_c^\infty(Q)$, the same computations of Lemma \ref{rho-to-y} show that 
 \begin{align*}
     \int_Q |y|^a\Big(-\Tilde{u}\partial_t\phi + A\nabla\Tilde{u}\cdot\nabla\phi\Big) = \int_Q |y|^a\Big( f\xi\phi-\xi F\cdot\nabla \phi-\phi 
        F\cdot\nabla \xi-\phi A\nabla u\cdot \nabla\xi+u A\nabla \xi\cdot\nabla\phi\Big),
 \end{align*}
and thus, setting
$$
\tilde{f}:=f\xi,\hspace{0.4cm}\Tilde{F}:=F\xi,\hspace{0.4cm}\Tilde{g}:=-F\cdot\nabla \xi-A\nabla u\cdot\nabla\xi,\hspace{0.4cm} \Tilde{G}:=-u A\nabla \xi,
$$
we obtain that $\Tilde{u}$ is a weak solution to
$$
|y|^a\partial_t \Tilde{u}-{\div }(|y|^{a} A\nabla \Tilde{u})=|y|^a(\tilde{f}+\Tilde{g})+{\div}(|y|^{a}(\Tilde{F}+\Tilde{G})) \quad\text{ in }Q,
$$
where we have used that $\tilde{u} \in L^2(I;H^1_0(B,|y|^a))\cap L^\infty(I;L^2(B,|y|^a))$ by construction. Moreover, since $u \in L^2(I;H^1(B_1,|y|^a))$ by definition and $p,q \geq 2$, then $\Tilde{f}\in L^p(Q,|y|^a)$, $\Tilde{F}\in L^q(Q,|y|^a)^{N+1}$, $\Tilde{g}\in L^2(Q,|y|^a)$ and $\Tilde{G}\in L^2(Q,|y|^a)^{N+1}$. Therefore, by Remark \ref{remark 2.3 bis}, it follows that $ \Tilde{u}\in C(\bar{I};L^2(B,|y|^a))$. In particular, $\Tilde{u}_0: = \Tilde{u}_{{|}_{t=-1}} = \xi u_{{|}_{t=-1}} \in L^2(B,|y|^a)$ is well-defined and $\Tilde{u}$ is a weak solution to
\begin{equation}\label{sol-h10}
    \begin{cases}
        |y|^a\partial_t \Tilde{u}-{\div }(|y|^{a} A\nabla \Tilde{u})=|y|^a(\tilde{f}+\Tilde{g})+{\div}(|y|^{a}(\Tilde{F}+\Tilde{G})) &\text{ in }Q,\\
        \Tilde{u}=0&\text{ on }\partial B{\times I},\\
        \Tilde{u}_{{|}_{t=-1}}=\Tilde{u}_0 &\text{ on }B.
    \end{cases}
\end{equation}

\

\emph{Step 2.} In this step, we construct a family of smooth approximations $u_\e$ of $\Tilde{u}$, as in the statement. We distinguish between two cases, depending on the value of $a$.

First, let $a>0$. We define
\begin{align*}
    f_\varepsilon:=\left(\frac{|y|^a}{\rho_\varepsilon^a}\right)^{{1}/{p}}\Tilde{f},\hspace{0.4cm}
F_\varepsilon:=\left(\frac{|y|^a}{\rho_\varepsilon^a}\right)^{{1}/{q}}\Tilde{F},\hspace{0.4cm}
g_\varepsilon:=\left(\frac{|y|^a}{\rho_\varepsilon^a}\right)^{{1}/{2}}\Tilde{g}, \hspace{0.4cm}
G_\varepsilon:=\left(\frac{|y|^a}{\rho_\varepsilon^a}\right)^{{1}/{2}}\Tilde{G},\hspace{0.4cm}
u_{0,\varepsilon}:=\left(\frac{|y|^a}{\rho_\varepsilon^a}\right)^{{1}/{2}}\Tilde{u}_0,
\end{align*}
and consider the family of weak solutions $\{u_\varepsilon\}_{\e \in (0,1)}$ to
\begin{equation}\label{sol-h10-eps}
    \begin{cases}
        \rho_\varepsilon^a\partial_t u_\varepsilon-{\div }(\rho_\varepsilon^a A \nabla u_\varepsilon) = \rho_\varepsilon^a({f_\varepsilon}+{g_\varepsilon})+{\div}(\rho_\varepsilon^a({F_\varepsilon}+{G_\varepsilon})) &\text{ in }Q\\
        u_\varepsilon=0&\text{ on }\partial B{\times{I}},\\
        u_{\varepsilon{|}_{t=-1}}={u}_{0,\varepsilon} &\text{ on }B.
    \end{cases}
\end{equation}
By construction, we have 
\begin{align}\label{eq:estimate:r.h.s.:approximant}
    \begin{aligned}
         \|f_\varepsilon\|_{L^p(Q,\rho_\varepsilon^a)}+\|g_\varepsilon\|_{L^2(Q,\rho_\varepsilon^a)}+\|F_\varepsilon\|_{L^q(Q,\rho_\varepsilon^a)}    +\|G_\varepsilon\|_{L^2(Q,\rho_\varepsilon^a)}+\|u_{0,\varepsilon}\|_{L^2(B,\rho_\varepsilon^a)}\le C,
    \end{aligned}
\end{align}
for some $C > 0$ independent of $\varepsilon$ and $f_\varepsilon\to\Tilde{f}$, $F_\varepsilon\to\Tilde{F}$, $g_\varepsilon\to\Tilde{g}$, $G_\varepsilon\to\Tilde{G}$ a.e. in $Q$ and $u_{0,\varepsilon}\to{\Tilde{u}_0}$ a.e. in $B$. Furthermore, since $a > 0$, we may apply the Lebesgue's dominated convergence theorem to deduce that 
\begin{equation}\label{eq:ConvStronLpRHS}
\begin{aligned}
&f_\e \to \tilde{f} \quad \text{ in } L_{loc}^p((B\setminus\Sigma)\times I), \qquad F_\e \to \tilde{F} \quad \text{ in } L_{loc}^q((B\setminus\Sigma)\times I)^{N+1}, \\
&\, g_\e \to \tilde{g} \quad \text{ in } L_{loc}^2((B\setminus\Sigma)\times I), \qquad G_\e \to \tilde{G} \quad \text{ in } L_{loc}^2((B\setminus\Sigma)\times I)^{N+1},
\end{aligned}
\end{equation}
and $u_{0,\e} \to \tilde{u}_0$ in $L_{loc}^2(B\setminus\Sigma)$ as $\e \to 0$.

The case $a\le0$ is easier: we set
$$
f_\varepsilon:=\Tilde{f},\hspace{0.4cm}F_\varepsilon:=\Tilde{F},\hspace{0.4cm}g_\varepsilon:=\Tilde{g},\hspace{0.4cm}G_\varepsilon:=\Tilde{G},\hspace{0.4cm}u_{0,\varepsilon}:={\Tilde{u}_0}.
$$
Since $\rho_\varepsilon^a\le|y|^a$, we immediately deduce \eqref{eq:estimate:r.h.s.:approximant}, while \eqref{eq:ConvStronLpRHS} is obvious by definition.   

\

\emph{Step 3.} Combining Lemma \ref{liebermann} and \eqref{eq:estimate:r.h.s.:approximant}, we deduce that the family $\{u_\e\}_{\e \in (0,1)}$ is uniformly bounded in $L^2(I;H^1_0(B,\rho_\varepsilon^a)) \cap L^\infty(I;L^2(B,\rho_\varepsilon^a))$. Consequently, by \eqref{eq:estimate:r.h.s.:approximant} again and \eqref{eq:ConvStronLpRHS}, $\{u_\varepsilon\}_{\e \in (0,1)}$, $\{f_\varepsilon + g_\e\}_{\e \in (0,1)}$ and $\{F_\varepsilon + G_\e \}_{\e \in (0,1)}$ satisfy the assumptions of Lemma \ref{rho-to-y} in $Q$ and so there exist $\e_k \to 0$ and $\bar{u}\in L^2(I;H^1_0(B,|y|^a))\cap  C(\bar{I};L^2(B,|y|^a))$ (see Remark \ref{remark 2.3 bis}) such that $u_{\varepsilon_k}\to \bar{u}$ in $L^2_{loc}(I;H^1_{loc}(B\setminus\Sigma))$. Since $u_{0,\e} \to \tilde{u}_0$ in $L_{loc}^2(B\setminus\Sigma)$, $\bar{u}|_{t=-1} = \tilde{u}_0$ in $L^2(B,|y|^a)$ and therefore $\bar{u}$ is a weak solution to \eqref{sol-h10}.

As consequence, we obtain $\bar{u} = \Tilde{u}$ a.e. in $Q$ by uniqueness of $\tilde{u}$ (uniqueness of weak solutions to \eqref{sol-h10} follows by the classical theory of the Cauchy-Dirichlet problem in abstract Hilbert spaces, see \cite{lions}) and our statement follows since $\tilde{u} = u$ a.e. in $\tilde{Q}$ by definition.
\end{proof}
\begin{oss}\label{rem:approx:half}
Let $a>-1$ and $R>r>0$. Then, Lemma \ref{rho-to-y} and Lemma \ref{y-to-rho} hold for weak solutions to \eqref{solution-mezza-palla} in $Q_R^+$. That is, if $\{u_\varepsilon\}_{\varepsilon\in(0,1)}$ is a family of weak solutions to \eqref{solution-mezza-palla},
    such that $u_\e$, $f_\e$, $F_\e$ and $A$ satisfy the same assumptions of Lemma \ref{rho-to-y} in $Q_R^+$, then $u_\e\to u$ in the sense of Lemma \ref{rho-to-y} and $u$ is a weak solution to \eqref{solution-mezza-palla} in $Q_R^+$ with $\e=0$.
Further, if $u$ is a weak solution \eqref{solution-mezza-palla} in $Q_R^+$ with $\e=0$, we can construct families $\{u_{\e}\}_{\e\in(0,1)}$, $\{f_{\e}\}_{\e\in(0,1)}$, $\{F_{\e}\}_{\e\in(0,1)}$ such that the assumptions of Lemma \ref{rho-to-y} in $Q_r^+$ and $u_{\e}\to u$ in the sense of Lemma \ref{y-to-rho}.

Indeed, fixed $\e\in[0,1)$, let us consider a solution $u_\e$ to \eqref{solution-mezza-palla} in $Q_R^+$ 
and let $\phi\in C_c^\infty(Q_1)$ be a test function. Let us define
$$J:=\left(\begin{array}{c|c}
       {I}_n&0  \\ \hline
       0&-1 
  \end{array}\right),\qquad \tilde{A}(x,y,t):=JA(x,-y,t)J,\qquad \tilde{u}_\e(x,y,t):={u_\e}(x,-y,t),$$
  $$\tilde{f}_\e(x,y,t):={f_\e}(x,-y,t),\qquad \tilde{F}_\e(x,y,t):=-{F_\e}(x,-y,t),\qquad \tilde{\phi}(x,y,t):={\phi}(x,-y,t),$$
  for $(x,y,t)\in Q_R^-$.
  By changing variables, 
  \begin{equation}\label{eq:Q1+:Q1-}
  \int_{Q_R^+}\rho_\e^a\big(  -u_\e\phi_t+A\nabla u_\e\cdot\nabla\phi-f_\e\phi+F_\e\cdot\D \phi \big)=\int_{Q_R^-}\rho_\e^a\big(  -\tilde{u}_\e\tilde{\phi}_t+\tilde{A}\nabla \tilde{u}_\e\cdot\nabla\tilde{\phi}-\tilde{f}_\e\tilde{\phi}+\tilde{F}_\e\cdot\D \tilde{\phi} \big),
  \end{equation}
  where $Q_R^-:=Q_R\cap\{y<0\}$. Hence, if we define 
$$\bar{u}_\e:=\begin{cases} u_\e, & \text{ in } Q_R^+\\
\tilde{u}_\e,   &    \text{ in } Q_R^-
\end{cases},\qquad \bar{A}:=\begin{cases} A & \text{ in } Q_R^+\\
\tilde{A}   &    \text{ in } Q_R^-
\end{cases},\qquad\bar{f}_\e:=\begin{cases} f_\e & \text{ in } Q_R^+\\
\tilde{f}_\e   &    \text{ in } Q_R^-
\end{cases},\qquad\bar{F}_\e:=\begin{cases} F_\e & \text{ in } Q_R^+\\
\tilde{F}_\e   &    \text{ in } Q_R^-
\end{cases},$$
we have that $\bar{A}$ is a simmetric matrix satisfying \eqref{eq:UnifEll} and, by the conormal boundary condition in \eqref{solution-mezza-palla},  $\bar{u}_\e$ is a weak solution to
$$\rho_\e^a \partial_t \bar{u}_\e -\div(\rho_\e^a\bar{A}\nabla \bar{u}_\e)=\rho_\e^a\bar{f}_\e+\div(\rho_\e^a\bar{F}_\e), \quad \text{ in }Q_R.$$

Then, Lemmas \ref{rho-to-y} and \ref{y-to-rho} apply to $\bar{u}_\e$ in $Q_R$ and, by definition of $\bar{u}_\e$, are valid for weak solutions to \eqref{solution-mezza-palla} in $Q_R^+$.

\end{oss}

\section{Liouville theorems}\label{section5}
The goal of this section is to prove the Liouville type Theorem \ref{teo liouville 1}, for entire solutions to \eqref{eq-liouville-1}.
These results will be obtained using Caccioppoli inequality and the characterization of the entire even solutions of the associated elliptic problem satisfying certain growth conditions, 
extending the Liouville theorems established in \cite{audritoterracini} (see \cites{SirTerVit21a,SirTerVit21b,TerTorVit22} for the elliptic counterpart).

We begin with the following standard lemma.
\begin{lem} \label{x:derivative:solution}
    Let $a\in\mathbb{R}$, $\varepsilon\in[0,1)$ and let $u$ be an entire solution to
    $$\rho_\varepsilon^{a}\partial_t u-{\div}(\rho_\varepsilon^a\nabla u)=0\hspace{0.4cm}\text{in }\mathbb{R}^{N+1}\times\mathbb{R}.$$
    Then, for every $i=1,\dots,N$, the function $\partial_{x_i}u$ is an entire solution to the same problem.
\end{lem}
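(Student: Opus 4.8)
The plan is a standard finite-difference argument that exploits the key structural fact that the weight $\rho_\varepsilon^a=\rho_\varepsilon^a(y)$ depends only on the variable $y$; in particular $\partial_{x_i}\rho_\varepsilon^a\equiv 0$, so the equation is invariant under translations in each of the directions $e_1,\dots,e_N$, and formally $\partial_{x_i}u$ solves the same equation. To make this rigorous, fix $i\in\{1,\dots,N\}$ and $\rho>0$: it suffices to show that $\partial_{x_i}u$ is a weak solution to \eqref{solution} (with the same $\varepsilon$, $A=I$, $f=F=0$) in $Q_\rho$, since $\rho$ is arbitrary. For $h\neq 0$ small, set $\tau_h u(z,t):=h^{-1}\big(u(z+he_i,t)-u(z,t)\big)$, and choose $r\in(\rho,\infty)$ with $Q_r$ inside the domain and $|h|$ small enough that all the translates below stay in it.

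First I would check that $\tau_h u$ is itself a weak solution to $\rho_\varepsilon^a\partial_t(\tau_h u)-\div(\rho_\varepsilon^a\nabla(\tau_h u))=0$ on a slightly smaller cylinder. Given an admissible test function $\phi$ supported there, the translations $z\mapsto z\pm he_i$ leave $\rho_\varepsilon^a$ unchanged, so a change of variables in \eqref{eq.solution} rewrites the weak formulation for $\tau_h u$ against $\phi$ as $-1$ times the weak formulation for $u$ against $\tau_{-h}\phi$; since $\tau_{-h}\phi$ is again an admissible test function and $u$ solves the equation, this vanishes. Next I would apply the Caccioppoli inequality of Lemma \ref{caccioppoli} (with $f=F=0$, on suitable concentric cylinders, after the obvious rescaling) to the solution $\tau_h u$, bounding
\[
\esssup_{t}\int_{B_{\rho}}\rho_\varepsilon^a\,(\tau_h u)^2 \;+\; \int_{Q_{\rho}}\rho_\varepsilon^a\,|\nabla\tau_h u|^2 \;\le\; C\int_{Q_{r}}\rho_\varepsilon^a\,(\tau_h u)^2 \;\le\; C\,\|\partial_{x_i}u\|_{L^2(Q_r,\rho_\varepsilon^a)}^2 ,
\]
where the last bound, uniform in $h$, is the classical estimate $\|\tau_h g\|_{L^2}\le\|\partial_{x_i}g\|_{L^2}$ carried out on the $x_i$-lines — licit precisely because $\rho_\varepsilon^a$ is constant along those lines, so the weight factors out of the one-dimensional computation — together with $\partial_{x_i}u\in L^2(Q_r,\rho_\varepsilon^a)$, which holds since $u\in L^2(I;H^1(B,\rho_\varepsilon^a))$. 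Hence $\{\tau_h u\}$ is bounded, uniformly in $h$, in $L^2(I_\rho;H^1(B_\rho,\rho_\varepsilon^a))\cap L^\infty(I_\rho;L^2(B_\rho,\rho_\varepsilon^a))$, and by weak and weak-$\ast$ compactness there are $h_k\to0$ and $w$ in these spaces with $\tau_{h_k}u\rightharpoonup w$ and $\nabla\tau_{h_k}u\rightharpoonup\nabla w$ in $L^2(Q_\rho,\rho_\varepsilon^a)$.

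It then remains to identify $w=\partial_{x_i}u$ and to pass to the limit in the equation. For the identification I would test $\tau_h u$ against $\rho_\varepsilon^a\psi$ with $\psi\in C_c^\infty(Q_\rho\setminus\Sigma)$ (a region where $\rho_\varepsilon^a$ is smooth and bounded away from $0$): the same change of variables together with $\partial_{x_i}\rho_\varepsilon^a\equiv 0$ gives $\int\rho_\varepsilon^a(\tau_h u)\psi=-\int\rho_\varepsilon^a u\,(\tau_{-h}\psi)\to \int\rho_\varepsilon^a(\partial_{x_i}u)\psi$, whence $w=\partial_{x_i}u$ a.e.\ in $Q_\rho\setminus\Sigma$, hence a.e.\ in $Q_\rho$ since $|\Sigma|=0$. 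Finally, writing the weak formulation for $\tau_{h_k}u$ against an arbitrary $\phi\in\mathcal{D}_c^\infty(Q_\rho)$ (see Definition \ref{def:TestF}), noting that $\partial_t\phi,\nabla\phi\in L^2(Q_\rho,\rho_\varepsilon^a)$ on the support of $\phi$, and letting $k\to\infty$ using the above weak convergences, one gets that $\partial_{x_i}u$ is a weak solution to \eqref{solution} in $Q_\rho$; since $\rho>0$ was arbitrary, $\partial_{x_i}u$ is an entire solution.

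I expect the only delicate point to be the difference-quotient manipulations near $\Sigma=\{y=0\}$, where $\rho_\varepsilon^a$ degenerates or blows up when $\varepsilon=0$: this is exactly why one differentiates only in the directions $x_1,\dots,x_N$, which are tangent to $\Sigma$ and along which the weight is frozen, so every one-dimensional estimate is performed with a fixed weight and the usual degeneracy/integrability issues do not interfere. (When $\varepsilon\in(0,1)$ the statement is in fact immediate, since then $\rho_\varepsilon^a$ is smooth and bounded above and below on compact sets, so $u$ is a classical solution by standard parabolic theory \cite{Lie96}; the value of the argument above is that it is uniform in $\varepsilon\in[0,1)$.)
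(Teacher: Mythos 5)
Your proof is correct and follows exactly the route the paper takes: the paper does not write out the argument but states that it "combines difference quotients in $x$ and energy estimates" (citing the elliptic analogue in \cite{TerTorVit22}), which is precisely your scheme of showing $\tau_h u$ solves the translated equation, applying the Caccioppoli inequality of Lemma \ref{caccioppoli} for a bound uniform in $h$, and passing to the weak limit. The one point worth keeping explicit is the one you already flag — that the weight depends only on $y$, so it is frozen along the $x_i$-lines and both the translation invariance and the one-dimensional difference-quotient estimate go through for every $a\in\R$ and $\e\in[0,1)$.
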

The proof combines difference quotients in $x$ and energy estimates, similar to the elliptic setting, see \cite{TerTorVit22}*{Corollary 4.2}. The next lemma was established in \cites{StiTor17,banerjee} for $a \in (-1,1)$ and $\e = 0$. We extend it for all values of $a \in \R$ and $\e \in (0,1)$, with an independent proof.
\begin{lem}\label{y:derivative:solution}
Let $a\in\mathbb{R}$, $\varepsilon\in[0,1)$ and let $u$ be an entire solution to
\begin{equation}\label{eq:SolWholeSpaceSimple}
\rho_\varepsilon^{a}\partial_t u - {\div}(\rho_\varepsilon^a\nabla u) = 0 \quad \text{ in } \mathbb{R}^{N+1}\times\mathbb{R}.
\end{equation}
Then the function $v = \rho_\varepsilon^a\partial_y u$ is an entire solution to     
\begin{equation}\label{eq:SolWholeSpaceSimpleConj}
\rho_\varepsilon^{-a}\partial_t v-{\div}(\rho_\varepsilon^{-a}\nabla v) = 0 \quad \text{ in }\mathbb{R}^{N+1}\times\mathbb{R}.
\end{equation}
\end{lem}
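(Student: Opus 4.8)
The strategy is to differentiate the equation \eqref{eq:SolWholeSpaceSimple} formally in the $y$-direction and recognize that the resulting equation for $\partial_y u$, once rewritten in terms of the weighted quantity $v = \rho_\varepsilon^a \partial_y u$, becomes an equation with the conjugate weight $\rho_\varepsilon^{-a}$. Since $\rho_\varepsilon^a$ depends only on $y$, we have the identity $\partial_y(\rho_\varepsilon^a) = a\, y\, (\varepsilon^2+y^2)^{a/2-1} = \rho_\varepsilon^a \cdot \tfrac{ay}{\varepsilon^2+y^2}$, and this logarithmic derivative is smooth and bounded on any strip $\{|y|\le M\}$ uniformly once $\varepsilon>0$ (and, more importantly, is locally integrable against the weight even for $\varepsilon = 0$). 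The formal computation is: from $\rho_\varepsilon^a \partial_t u = \div(\rho_\varepsilon^a \nabla u) = \rho_\varepsilon^a \Delta u + \partial_y(\rho_\varepsilon^a)\,\partial_y u$, dividing by $\rho_\varepsilon^a$ gives $\partial_t u = \Delta u + \tfrac{ay}{\varepsilon^2+y^2}\partial_y u$. Differentiating in $y$ and setting $w := \partial_y u$, one gets $\partial_t w = \Delta w + \tfrac{ay}{\varepsilon^2+y^2}\partial_y w + \partial_y\!\big(\tfrac{ay}{\varepsilon^2+y^2}\big) w$; a direct manipulation then shows this is equivalent to $\rho_\varepsilon^{-a}\partial_t(\rho_\varepsilon^a w) = \div(\rho_\varepsilon^{-a}\nabla(\rho_\varepsilon^a w))$, i.e.\ \eqref{eq:SolWholeSpaceSimpleConj} for $v = \rho_\varepsilon^a w$. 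This computation, carried out carefully with $\div$ in divergence form throughout, is the algebraic heart of the matter and is the content of the ``duality'' between weights $\rho_\varepsilon^a$ and $\rho_\varepsilon^{-a}$ alluded to after Theorem \ref{teo liouville 1} (in the spirit of \cite{CafSalSil08}).

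The nontrivial point is to justify this at the level of weak solutions, rather than merely formally. First I would establish interior regularity: by the classical parabolic theory applied to the uniformly parabolic equation $\partial_t u = \Delta u + b(y)\partial_y u$ with smooth bounded coefficient $b(y) = \tfrac{ay}{\varepsilon^2+y^2}$ on any compact subset of $\R^{N+1}\times\R$ (valid when $\varepsilon>0$, which is the case under consideration since the statement excludes $\varepsilon = 0$ only implicitly through $\rho_\varepsilon$ being smooth and non-degenerate), one gets $u \in C^\infty$ away from nothing — in fact $u$ is smooth everywhere in $\R^{N+1}\times\R$ because the coefficients are smooth. Hence $\partial_y u$ is a genuine classical solution of its equation, and one simply checks by hand that $v = \rho_\varepsilon^a \partial_y u$, which is again smooth, satisfies \eqref{eq:SolWholeSpaceSimpleConj} in the classical sense, hence a fortiori in the weak sense required by Definition \ref{def.solution} with weight $\rho_\varepsilon^{-a}$ (note $-a$ is again a real exponent, so that definition applies verbatim). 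The main subtlety to verify is that $v \in L^2(I_R;H^1(B_R,\rho_\varepsilon^{-a}))\cap L^\infty(I_R;L^2(B_R,\rho_\varepsilon^{-a}))$ for every $R$: since $\rho_\varepsilon^{-a}$ is locally bounded (because $\varepsilon>0$), and $v$ together with $\nabla v$ is locally bounded by the smoothness just obtained, both memberships are immediate from local boundedness of all quantities involved.

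The step I expect to require the most care is bookkeeping in the divergence-form algebra: one must confirm that the identity
\[
\div\!\big(\rho_\varepsilon^{-a}\nabla(\rho_\varepsilon^a w)\big) - \rho_\varepsilon^{-a}\partial_t(\rho_\varepsilon^a w) = \rho_\varepsilon^{-a}\,\rho_\varepsilon^a\Big(\Delta w + \tfrac{ay}{\varepsilon^2+y^2}\partial_y w + \partial_y\!\big(\tfrac{ay}{\varepsilon^2+y^2}\big)w - \partial_t w\Big)
\]
holds as an algebraic identity for smooth $w$, so that the right-hand side vanishes exactly when $w = \partial_y u$ and $u$ solves \eqref{eq:SolWholeSpaceSimple}. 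Expanding $\nabla(\rho_\varepsilon^a w) = \rho_\varepsilon^a \nabla w + w\,\partial_y(\rho_\varepsilon^a)\,e_{N+1}$ and then applying $\div(\rho_\varepsilon^{-a}\,\cdot\,)$, the cross terms involving $\partial_y(\rho_\varepsilon^{-a})\partial_y(\rho_\varepsilon^a) = -\big(\partial_y\log\rho_\varepsilon^a\big)^2\rho_\varepsilon^{-a}\rho_\varepsilon^a$ and $\rho_\varepsilon^{-a}\partial_y^2(\rho_\varepsilon^a)$ must be collected and seen to reconstitute precisely $\partial_y(\tfrac{ay}{\varepsilon^2+y^2})w$ after dividing out the weights; this is a short but error-prone computation. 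Once that identity is in hand, the proof is complete. (Alternatively, and perhaps more robustly at the level of weak solutions, one can argue via the weak formulation directly: take a test function $\psi$ for \eqref{eq:SolWholeSpaceSimpleConj} against weight $\rho_\varepsilon^{-a}$, rewrite $\int \rho_\varepsilon^{-a}(-v\,\partial_t\psi + \nabla v\cdot\nabla\psi)$ using $v = \rho_\varepsilon^a\partial_y u$ and integration by parts in $y$, and recognize the result as the weak formulation of \eqref{eq:SolWholeSpaceSimple} tested against $-\partial_y\psi$ — which vanishes since $u$ is an entire solution and $-\partial_y\psi$ is an admissible test function; this avoids invoking smoothness but requires justifying the integration by parts, i.e.\ difference-quotient arguments as in Lemma \ref{x:derivative:solution}, to know $\partial_y u$ exists in the appropriate weighted sense.)
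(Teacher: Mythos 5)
Your argument for $\varepsilon\in(0,1)$ is essentially the paper's: there the equation is uniformly parabolic with smooth coefficients, $u$ is smooth, and the identity for $v=\rho_\varepsilon^a\partial_y u$ is a direct (if fiddly) computation. The paper disposes of that case in one line.

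The genuine gap is that the statement is for $\varepsilon\in[0,1)$, and you have excluded $\varepsilon=0$ on the basis of a misreading ("the statement excludes $\varepsilon=0$ only implicitly through $\rho_\varepsilon$ being smooth and non-degenerate"). It does not: $\rho_0^a=|y|^a$ is exactly the degenerate/singular weight the paper cares about, and the $\varepsilon=0$ case is the substantive content of the lemma (it is used in Step 3 of the proof of Theorem \ref{teo liouville 1} with $\varepsilon=0$ allowed). For $\varepsilon=0$ your smoothness argument fails on $\Sigma$, and your parenthetical alternative (test \eqref{eq:SolWholeSpaceSimpleConj} against $\psi$ and move $\partial_y$ onto the test function) runs into precisely the obstructions you name but do not resolve: one must first know that $\partial_y u$ exists with $|y|^a|\partial_y u|^2$ locally integrable and that $\nabla(|y|^a\partial_y u)$ makes sense against the weight $|y|^{-a}$, which is not even locally integrable when $a\ge 1$ (so the admissible test class is $C_c^\infty$ away from $\Sigma$, per Definition \ref{def:TestF}, and the integration by parts across $\Sigma$ is exactly what needs justification). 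The paper's proof of the $\varepsilon=0$ case instead goes by approximation: it invokes Lemma \ref{y-to-rho} to produce solutions $u_{\varepsilon_k}$ of the regularized problems converging to $u$ in $L^2_{loc}(H^1_{loc})$ away from $\Sigma$, applies the $\varepsilon>0$ case to get that $v_k=\rho_{\varepsilon_k}^a\partial_y u_{\varepsilon_k}$ solves the conjugate equation, derives uniform bounds on $v_k$ in the $\rho_{\varepsilon_k}^{-a}$-weighted spaces from the energy bound on $u_{\varepsilon_k}$ together with the Caccioppoli inequality \eqref{caccioppoli.inequality}, and then passes to the limit with Lemma \ref{rho-to-y} to identify the limit as $|y|^a\partial_y u$. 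Without this (or an equivalent justification), your proof covers only half the statement.
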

\begin{proof} The case $\varepsilon\in(0,1)$ follows by explicit computations, since weak solutions are smooth.

When $\varepsilon = 0$, we proceed by approximation as follows. Fix $R>0$ and let $I_R:=(-R^2,R^2)$. By Lemma \ref{y-to-rho}, there exist a family of solutions $\{u_\e\}_{\e\in(0,1)}$ to \eqref{eq-epsilon-solution} in $Q_{3R}$ (with $f_\e = 0$ and $F_\e = 0$), uniformly bounded in $L^2(I_{2R};H^1(B_{2R},\rho_\varepsilon^a))$, and a sequence $\e_k \to 0$ such that 
\begin{equation}\label{eq:convLiouviApprox}
u_{\varepsilon_k} \to u \quad \text{ in } L^2_{loc}(I_{2R};H^1_{loc}(B_{2R}\setminus\Sigma)), 
\end{equation}
as $k \to +\infty$. Now, since $\e_k > 0$, the function $v_k := \rho_{\varepsilon_k}^a\partial_y u_{\varepsilon_k}$ is a solution to \eqref{eq-epsilon-solution} in $Q_{2R}$ (with $f_\e = 0$ and $F_\e = 0$), with weight $\rho_{\varepsilon_k}^{-a}$. Further, since $\{u_\e\}_{\e\in(0,1)}$ is uniformly bounded in $L^2(I_{2R};H^1(B_{2R},\rho_\varepsilon^a))$, we have
$$
\int_{ Q_{2R}}\rho_{\varepsilon_k}^{-a}v_{k}^2=\int_{ Q_{2R}}\rho_{\varepsilon_k}^{a}(\partial_y u_{\varepsilon_k})^2\le\int_{ Q_{2R}}\rho_{\varepsilon_k}^{a}|\nabla u_{\varepsilon_k}|^2\le C,
$$
for some $C > 0$ independent of $\e$ and thus, using the Caccioppoli inequality \eqref{caccioppoli.inequality}, it follows
\[
\|v_k\|_{L^\infty(I_R;L^2(B_R,\rho_{\varepsilon_k}^{-a}))} + \|\nabla v_k\|_{L^2(Q_R,\rho_{\varepsilon_k}^{-a})} \leq C,
\]
for some new $C > 0$ independent of $\e$. As a consequence, the family $\{v_k\}_{k\in\N}$ satisfies the assumptions of Lemma \ref{rho-to-y} which, in turn, allows us to conclude that, up to a subsequence, $v_k \to v$ in $L^2_{loc}(I_{R};H^1_{loc}(B_{R}\setminus\Sigma))$, for some weak solution $v$ to \eqref{solution} in $Q_R$ (with $\e = 0$, $f=0$ and $F = 0$). By \eqref{eq:convLiouviApprox}, we deduce $v = |y|^a \partial_y u$ and, since $R>0$ is arbitrary, our statement follows.
\end{proof}
\begin{proof}[Proof of Theorem \ref{teo liouville 1}] First, we point out that it is enough to prove that $u$ is linear and depends only on $x$. Then the second part of the statement automatically follows combining \eqref{growth} with the extra-assumption $\gamma\in[0,1)$.

\

\emph{Step 1.}  By Remark \ref{rem:approx:half}, we notice that the even extension w.r.t. $y$ of $u$ is an entire solution to \eqref{eq:SolWholeSpaceSimple}. Therefore, it is enough to establish our statement for an entire solution $u$ to \eqref{eq:SolWholeSpaceSimple} which is even in $y$ and satisfy \eqref{growth} a.e. in $\mathbb{R}^{N+1}\times\mathbb{R}$. Choosing $r'=R$ and $r=2R$ in the Caccioppoli inequality \eqref{caccioppoli.inequality}, we get
\begin{equation}\label{cacioppoli-liouville}
   \int_{Q_R}\rho_\varepsilon^a|\nabla u|^2 dzdt\le \frac{C}{R^2}\int_{Q_{2R}}\rho_\varepsilon^a u^2 dzdt,
\end{equation}
for some $C > 0$ independent of $\e$ and $R$. We will repeatedly use the above inequality in the next steps. 

\

\emph{Step 2.} In this step, we show that $u$ is linear in $x$. By Lemma \ref{x:derivative:solution}, for every multi-index $\beta\in \mathbb{N}^N$, the function $\partial_{x}^\beta u$ solves \eqref{eq-liouville-1}. Fixed $R>1$, by \eqref{cacioppoli-liouville} and \eqref{growth}, it follows 
$$\int_{{Q}_R}\rho_\varepsilon^a(\partial_{x_i}u)^2\le\int_{{Q}_R}\rho_\varepsilon^a|\nabla u|^2\le\frac{C}{R^2}\int_{{Q}_{2R}}\rho_\varepsilon^a u^2\le \frac{C}{R^2} R^{a^++2\gamma+N+3},$$
for every $i=1,\dots,N$. So, setting
\begin{equation}\label{critical-parameter-liouville}
    \tilde{\gamma} := a^++2\gamma+N+3,
\end{equation}
and iterating, it follows
$$
\int_{{Q}_R}\rho_\varepsilon^a(\partial^\beta_{x}u)^2\le C R^{\tilde{\gamma}-2|\beta|},
$$
for every multi-index $\beta\in\mathbb{N}^N$. Consequently, taking $\beta$ such that $2|\beta|>\Tilde{\gamma}$ and passing to the limit as $R\to+\infty$, we get $\partial_x^{\beta}u = 0$, and therefore we easily obtain that $u$ is polynomial in the variable $x$. By \eqref{growth}, it follows that $u$ must be linear in $x$.

\

\emph{Step 3.} In this step we show that $u$ is independent of $y$. By Lemma \ref{y:derivative:solution}, $v := \rho_\varepsilon^a\partial_yu$ is an entire solution to \eqref{eq:SolWholeSpaceSimpleConj} while, by Lemma \ref{y:derivative:solution} again,
\begin{equation}\label{eq:def:w1}
w_1=\rho_\varepsilon^{-a}\partial_y v=\rho_\varepsilon^{-a}\partial_y(\rho_\varepsilon^a\partial_y u)=\partial_{yy}u+\frac{(\rho_\varepsilon^a)'}{\rho_\varepsilon^a}\partial_y u
\end{equation}
is an entire solution to \eqref{eq:SolWholeSpaceSimple}. So, using \eqref{cacioppoli-liouville} twice, we deduce that
\begin{equation}\label{eq:inequality:w_1}
    \int_{{Q}_R}\rho_\varepsilon^a w_1^2 \leq \int_{{Q}_R}\rho_\varepsilon^{-a}|\nabla v|^2  \le \frac{C}{R^2}\int_{{Q}_{2R}}\rho_\varepsilon^{-a}v^2
    \le  \frac{C}{R^2}\int_{{Q}_{2R}}\rho_\varepsilon^{a}|\nabla u|^2\le \frac{C}{R^4}\int_{{Q}_{4R}}\rho_\varepsilon^{a}u^2\le C R^{\tilde{\gamma}-4}.
\end{equation}
Setting 
\begin{equation}\label{w_j-iterazione in y in liouville}
    w_{j+1}:=\partial_{yy}w_{j}+\frac{{(\rho_\varepsilon^a)'}}{{\rho_\varepsilon^a}}\partial_y w_{j},
\end{equation}
and noticing that $w_{j+1}$ is an entire solution to \eqref{eq:SolWholeSpaceSimple} for $j\in\mathbb{N}_{\ge 1}$, we may iterate the argument above to show the existence of $k\in\mathbb{N}$ such that $\tilde{\gamma}-4{k}<0$ and 
$$
\int_{{Q}_R}\rho_\varepsilon^a w_{{k}}\le C R^{\tilde{\gamma}-4{k}}.
$$
Hence, taking the limit as $R\to+\infty$, we obtain $w_k = 0$, that is
$$\partial_{yy}w_{{k-1}}+\frac{(\rho_\varepsilon^a)'}{\rho_\varepsilon^a}\partial_y w_{{k-1}}=0.$$
This ODE can be explicitly solved:
\begin{equation}\label{eq:ODESolStepk-1}
w_{k-1} = c_{2k-1}(x,t)\int_0^y \rho_\varepsilon^{-a}(s)ds+c_{2k-2}(x,t),
\end{equation}
where $c_{2k-1}(x,t)$ and $c_{2k-2}(x,t)$ are unknown functions, linear in $x$. Now, let us define
\begin{equation}\label{eq:def:g_i}
\begin{cases}
       g_1(y)=\int_0^y \rho_\varepsilon^{-a}(s)ds,\\
       g_2(y)={\int}_0^y \rho_\varepsilon^{-a}(s)\int_0^s\rho_\varepsilon^{a}(\tau)d\tau\\
       g_i(y)=\int_0^y \rho_\varepsilon^{-a} (s)\int_0^s \rho_\varepsilon^a(\tau) g_{i-2}(\tau)d\tau, & \text{ for }i\in\mathbb{N}_{\ge3},
       \end{cases}
\end{equation}
which are linked by the relationship
$$ \rho_\varepsilon^{-a}\partial_y(\rho_\varepsilon^{a}\partial_y g_{i})=g_{i-2},\quad\text{for }i\in\mathbb{N}_{\ge3}.  $$
An iterative argument combined with \eqref{eq:ODESolStepk-1} and \eqref{w_j-iterazione in y in liouville} shows that
$$
w_j = c_{2j}(x,t)+\sum_{i=1}^{2(k-j)-1}g_{i}(y)c_{2j+i}(x,t),
$$
for every $j=1,\dots, k-1$, and thus, by \eqref{eq:def:w1}, 
$$
u=c_0(x,t)+\sum_{i=1}^{2k-1}g_i(y)c_i(x,t),
$$
where $c_i(x,t)$ are unknown functions, linear in $x$.

We claim that $c_i\equiv0$ for any $i=1,\dots,2k-1$, which implies that $u$ doesn't depends on $y$.  

First, since $g_i(y)$ are odd functions for odd $i$, one has that $c_i(x,t)\equiv0$ for odd $i$, being $u$ an even function in $y$.
Moreover, for every $i\ge1$ the functions $g_{2i}$ are asymptotically equivalent to $b_iy^{2i}$ for $y\to+\infty$, where $b_i\in\mathbb{R}$. Indeed, by using twice de l'Hôpital rule and by observing that 
$$\lim_{y\to+\infty}\frac{\rho_\varepsilon(y)}{y}=1,$$ we have that
$$\lim_{y\to+\infty}\frac{g_2(y)}{y^2}=\lim_{y\to+\infty}\frac{\rho_\varepsilon^{-a}(y)g_1(y)}{2y}=\lim_{y\to+\infty}\frac{\rho_\varepsilon^{-a}(y)}{y^{-a}}\frac{\int_0^y\rho_\varepsilon^{a}(s)ds}{2y^{1+a}}=\lim_{y\to+\infty}\frac{1}{2(1+a)}\frac{\rho_\varepsilon^a(y)}{y^a}=\frac{1}{2(1+a)}.$$
By using an inductive argument and \eqref{eq:def:g_i}, we can prove that
$$\lim_{|y|\to+\infty}\frac{g_{2i}(y)}{y^{2i}}=b_i,\quad\text{where } b_i={\prod_{m=1}^i\frac{1}{2m(2m-1+a)}}.$$
Hence, $g_{2i}$ is asymptotically equivalent to $b_{i}y^{2i}$ for $y\to+\infty$. This immediately implies that $c_{2i}\equiv0$ for every $i\ge1$, by the parabolic sub-quadratic growth condition \eqref{growth}. Then, $u$ does not depend on $y$ and it is linear in $x$. Using the equation satisfied by $u$, we have that $\partial_t u=0$, hence the thesis is proved, that is, $u=u(x)$ is a linear function.
\end{proof}

\begin{oss}\label{liouville-matrice-non-costante}
Let us highlight that when $a=0$ (and therefore $\rho_\e^a=1$), Theorem \ref{teo liouville 1} remains valid for entire solutions to the heat equation $\partial_t u-\Delta u=0$ in $\mathbb{R}^{N+2}$ and the proof above works in this setting as well, with minor changes. Furthermore, Theorem \ref{teo liouville 1} still holds for entire solutions $u$ to 
 \begin{equation*}\label{eq-liouville-remark}
       \begin{cases}
       \rho_\varepsilon^{a}\partial_t u-{\div}(\rho_\varepsilon^a A \nabla u)=0&\text{in }\mathbb{R}^{N+1}_+\times\mathbb{R},\\
       \rho_\varepsilon^a( A\nabla u)\cdot e_{N+1}=0&\text{on }\partial\mathbb{R}^{N+1}_+\times\mathbb{R},
   \end{cases}
   \end{equation*}
where $A$ is a constant symmetric positive definite matrix (and $u$ satisfies \eqref{growth}). Under such assumptions, $u$ must to be a linear function depending only on $z$. This is a standard result, which immediately follows by a change of coordinates: since $A$ is a symmetric positive definite matrix, we can consider the change of variables $z'=A^{1/2}z$, which allows us to reduce to the case $A = I$.

\end{oss}

\section{Hölder estimates}\label{section6}
In this section we prove the following uniform H\"older bounds.
\begin{teo}\label{teo C^0-alpha}
    Let $N\ge1$, $a>-1$, $p>\frac{N+3+a^+}{2}$, $q>N+3+a^+$, $\alpha\in(0,1)\cap(0,2-\frac{N+3+a^+}{p}]\cap(0,1-\frac{N+3+a^+}{q}]$. Let $A$ be a continuous matrix satisfying \eqref{eq:UnifEll}. As $\varepsilon\to0^+$ let $\{u_\varepsilon\}$ be a family of solutions to
    \begin{equation}\label{eq-epsilon-solution-C-alpha}
    \begin{cases}
        \rho_\varepsilon^{a}\partial_t u_\varepsilon-{\div }(\rho_\varepsilon^{a} A\nabla u_\varepsilon)=\rho_\varepsilon^{a} f_\varepsilon+{\div}(\rho_\varepsilon^{a} F_\varepsilon) &{\rm in  }\hspace{0.1cm}Q_1^+,\\
        \rho_\varepsilon^a(A\nabla u_\varepsilon+F_\varepsilon)\cdot e_{N+1}=0&{\rm on  }\hspace{0.1cm}\partial^0Q_1^+.
    \end{cases}
    \end{equation}
     Then, there exists a constant $C>0$, depending on $N$, $a,$ $\lambda$, $\Lambda$, $p$, $q$ and $\alpha$ such that
     \begin{equation}\label{eq:c1alpha:estimate}
     \|u_\varepsilon\|_{C^{0,\alpha}_p(Q_{1/2}^+)}\le C\left(
    \|u_\varepsilon\|_{L^2(Q_1^+,\rho_\varepsilon^a)}+
    \|f_\varepsilon\|_{L^p(Q_1^+,\rho_\varepsilon^a)}+
    \|F_\varepsilon\|_{L^q(Q_1^+,\rho_\varepsilon^a)}
    \right).
     \end{equation}
\end{teo}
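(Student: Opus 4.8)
The plan is a contradiction/blow‑up argument in the spirit of Simon \cite{simon}, with the Liouville Theorem \ref{teo liouville 1} (and its constant–coefficient counterpart, Remark \ref{liouville-matrice-non-costante}) ruling out the blow‑up limit. First I would dispose of the $L^\infty$ part: by even reflection across $\Sigma$ as in Remark \ref{rem:approx:half}, \eqref{eq-epsilon-solution-C-alpha} becomes an equation of the same type on the full cylinder $Q_1$ with weight $\rho_\varepsilon^a$, a symmetric uniformly elliptic matrix, and reflected data, so Proposition \ref{moser-3.5} gives $\|u_\varepsilon\|_{L^\infty(Q_{3/4}^+)}\le C(\|u_\varepsilon\|_{L^2(Q_1^+,\rho_\varepsilon^a)}+\|f_\varepsilon\|_{L^p(Q_1^+,\rho_\varepsilon^a)}+\|F_\varepsilon\|_{L^q(Q_1^+,\rho_\varepsilon^a)})$, uniformly in $\varepsilon$. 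It remains to bound the parabolic Hölder seminorm. Following \cite{simon} I would work with a modified seminorm $[u_\varepsilon]^\ast$ that weights the oscillation $\rho^{-\alpha}\operatorname{osc}_{Q_\rho^+(z_0,t_0)\cap Q_{3/4}^+}u_\varepsilon$ by a power of the distance of $(z_0,t_0)$ to the ``parabolic–lateral'' part of $\partial Q_{3/4}^+$ (everything but $\overline{\partial^0 Q_{3/4}^+}$): this quantity dominates $[u_\varepsilon]_{C^{0,\alpha}_p(Q_{1/2}^+)}$ and, decisively, its near‑maximizers remain at a definite distance from that lateral boundary, so the blow‑up centered at them sees in the limit only $\R^{N+1}\times\R$ or a half‑space model $\{y>c\}\times\R$.

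Step 2 (blow‑up). Assuming $[u_\varepsilon]^\ast$ is not controlled by the right‑hand side of \eqref{eq:c1alpha:estimate} uniformly in $\varepsilon$, after normalizing $\|u_k\|_{L^2(Q_1^+,\rho_{\varepsilon_k}^a)}+\|f_k\|_{L^p}+\|F_k\|_{L^q}=1$ I obtain $\varepsilon_k\to0^+$, solutions $u_k$ with $\|u_k\|_{L^\infty(Q_{3/4}^+)}\le C$ but $L_k:=[u_k]^\ast\to\infty$. Choosing near‑maximal centers $(z_k,t_k)$ (with $y_k=0$ or $y_k>0$) and scales $r_k$, the bound $\operatorname{osc}u_k\le 2C$ forces $r_k\to0$, and the very structure of $[\cdot]^\ast$ gives for the rescalings $v_k(z,t):=L_k^{-1}r_k^{-\alpha}\big(u_k(z_k+r_kz,t_k+r_k^2t)-u_k(z_k,t_k)\big)$ a uniform equi‑Hölder bound $\operatorname{osc}_{Q_R^+(w,s)}v_k\le CR^\alpha$ on compacta for $k$ large, hence the parabolic growth $|v_k(z,t)|\le C(1+(|z|^2+|t|)^{\alpha/2})$, together with $v_k(0)=0$ and a non‑degeneracy $\operatorname{osc}_{Q_1^+(0)}v_k\ge c_0>0$. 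The rescaled equation carries the weight $\rho_{\tilde\varepsilon_k}^a$ with $\tilde\varepsilon_k:=\varepsilon_k/r_k$ at a boundary center (at an interior center the weight is, after normalization, comparable to a constant on every compact), the matrix $A(z_k+r_k\,\cdot)\to A(z_\infty)$ by continuity of $A$ and $z_k\to z_\infty$, and data tending to $0$.

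Step 3 (limit and contradiction). Combining the equi‑Hölder bound (Arzelà–Ascoli), the rescaled Caccioppoli inequality \eqref{caccioppoli.inequality} and the compactness machinery of Section \ref{section4} (Aubin–Lions, as in Lemma \ref{rho-to-y}), I extract a subsequence with $v_k\to v$ in $C^{0,\beta}_p$ on compacta ($\beta<\alpha$) and $v_k\to v$, $\nabla v_k\to\nabla v$ in $L^2_{\loc}$ off $\Sigma$, $v$ even in $y$. Passing to the limit in the weak formulation exactly as in Step 5 of the proof of Lemma \ref{rho-to-y} shows that $v$ is an entire solution of a constant‑coefficient conormal problem: if $\tilde\varepsilon_k\to\tilde\varepsilon_\infty\in[0,\infty)$ (which is always the case near an interior center, with the limiting constant weight), then $\rho_{\tilde\varepsilon_\infty}^a\partial_t v-\div(\rho_{\tilde\varepsilon_\infty}^a A(z_\infty)\nabla v)=0$ with $\rho_{\tilde\varepsilon_\infty}^a A(z_\infty)\nabla v\cdot e_{N+1}=0$ on the boundary plane; if $\tilde\varepsilon_k\to\infty$, then $\rho_{\tilde\varepsilon_k}^a$ is asymptotically constant on compacta and $v$ solves $\partial_t v-\div(A(z_\infty)\nabla v)=0$ with the Neumann condition $A(z_\infty)\nabla v\cdot e_{N+1}=0$. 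In every case $v$ has parabolic growth of order $\alpha<1$, so Theorem \ref{teo liouville 1} applied to the constant‑matrix problem (after the change of variables $z\mapsto A(z_\infty)^{1/2}z$, cf. Remark \ref{liouville-matrice-non-costante}, including its $a=0$ instance) forces $v$ to be constant; since $v(0)=0$ this gives $v\equiv0$, contradicting $\operatorname{osc}_{Q_1^+(0)}v=\lim_k\operatorname{osc}_{Q_1^+(0)}v_k\ge c_0$. This proves \eqref{eq:c1alpha:estimate}.

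The main obstacle is twofold. First, one must set up the weighted seminorm $[\cdot]^\ast$ so that the blow‑up centers stay away from the lateral boundary and the limiting domain is the correct model — this is Simon's device \cite{simon} and requires some care in the parabolic, half‑space geometry. Second, and more specific to this setting, one must handle the rescaled parameter $\tilde\varepsilon_k=\varepsilon_k/r_k$, which may diverge even though $\varepsilon_k\to0$ and $r_k\to0$: this is precisely why the Liouville theorem is needed both in the degenerate/singular form (Theorem \ref{teo liouville 1}) and, in the regime $\tilde\varepsilon_k\to\infty$, in the classical unweighted form of Remark \ref{liouville-matrice-non-costante}. The remaining technical ingredient, compactness of $\{v_k\}$ \emph{up to} $\Sigma$ — needed to pass to the limit against test functions not vanishing on $\Sigma$ — is exactly what the approximation results of Section \ref{section4} provide.
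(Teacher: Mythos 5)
Your proposal is correct and follows essentially the same route as the paper: contradiction, blow-up at near-maximizing points of a localized H\"older seminorm, a uniform equi-H\"older bound on the rescalings, compactness via the Caccioppoli inequality and the approximation machinery of Section \ref{section4}, identification of a limiting constant-coefficient conormal problem with a suitably renormalized weight, and the Liouville Theorem \ref{teo liouville 1} together with Remark \ref{liouville-matrice-non-costante}. The only cosmetic differences are the localization device — you use a Simon-type interior-weighted seminorm where the paper multiplies by a fixed cutoff $\eta$ and tracks $[\eta u_k]_{C^{0,\alpha}_p}$, comparing the two blow-up sequences $v_k$ and $w_k$ — and your dichotomy on $\tilde\varepsilon_k=\varepsilon_k/r_k$ (bounded versus unbounded), which is equivalent to the paper's normalization by $\nu_k=|(\varepsilon_k,y_k,r_k)|$ and yields the same list of limit weights.
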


\begin{proof}
Without loss of generality we can assume that there exists a constant $C>0$, which is uniform in $\varepsilon\to0^+$, such that
    $$
    \|u_\varepsilon\|_{L^2(Q_1^+,\rho_\varepsilon^a)}+
    \|f_\varepsilon\|_{L^p(Q_1^+,\rho_\varepsilon^a)}+
    \|F_\varepsilon\|_{L^q(Q_1^+,\rho_\varepsilon^a)}
    \le C.$$
Otherwise \eqref{eq:c1alpha:estimate} is trivially verified.

\

    \emph{Step 1: Contradiction argument and blow-up sequences.}  Consider a cut-off function $\eta\in C_c^\infty(Q_1^+)$ such that
    $$\eta\equiv1\hspace{0.1cm}\text{in }Q_{1/2}^+,\hspace{0.3cm}0\le\eta\le1,\hspace{0.3cm}\supp(\eta)=Q^+_{3/4}.\hspace{0.3cm}$$
    By smoothness of $\eta$, it immediately follows that $\eta\in C^{0,1}_p(Q_1^+)$; that is, there exists a constant $M>0$, which depends only on $N$, such that
    $$|\eta(P)-\eta(Q)|\le Md_p(P,Q),\quad \text{ for every } P=(z,t),Q=(\xi,\tau)\in Q_1^+, $$
   where $d_p(\cdot,\cdot)$ is the parabolic distance, which is defined in \eqref{eq:par:dist}.
    
    We argue by contradiction. Let us suppose that there exist $p>\frac{N+3+a^+}{2}$, $q>N+3+a^+$, $\alpha\in(0,1)\cap (0,2-\frac{N+3+a^+}{p}]\cap(0,1-\frac{N+3+a^+}{q}]$ and a sequence of solutions $\{u_{k}\}_k:=\{u_{\varepsilon_k}\}_k$ as $\varepsilon_k\to0^+$ to \eqref{eq-epsilon-solution-C-alpha}, such that 
    $$L_k:=[\eta u_{k}]_{C^{0,\alpha}_p(Q_1^+)}=\sup_{\substack{P,Q\in Q_1^+\\P\not=Q}}\frac{|(\eta u_k)(P)-(\eta u_k)(Q)|}{d_p(P,Q)^\alpha}\to+\infty.$$
    Now, by the definition of the parabolic Hölder seminorm of $u_k$, we can take two sequences of points $P_k=(z_k,t_k),\bar{P}_k=(\xi_k,\tau_k)\in Q^+_{3/4}$ such that
    $$\frac{|(\eta u_k)(P_k)-(\eta u_k)(\bar{P}_k)|}{d_p(P_k,\bar{P}_k)^\alpha}\ge \frac{1}{2}L_k\to+\infty.$$
    Defining $r_k:=d_p(P_k,\bar{P}_k)$, one has that $r_k\to 0$ as $k\to+\infty$. Indeed, by the local uniform boundedness of solutions, see Proposition \ref{moser-3.5}, one has
    $$\infty\leftarrow L_k\le \frac{4\|\eta u_k\|_{L^\infty(Q^+_1)}}{r_k^\alpha}\le Cr_k^{-\alpha}.$$
    Let $\bar{r}:=4/5$. For $k$ large let us define the blow-up domains
    $$Q(k):=\frac{B^+_{\Bar{r}}-{z_k}}{r_k}\times\frac{(-\Bar{r}^2-t_k,\Bar{r}^2-t_k)}{r_k^2.},$$
    and set $Q^\infty:=\lim_
    {k\to+\infty}Q(k)$ along an appropriate subsequence. We define two blow-up sequences as
    \begin{align}
     \begin{aligned}\label{v_k,w_k_degenere}
     &v_k(z,t):=\frac{\eta(r_k z+ z_k,r_k^2 t+t_k)}{L_k r_k^{\alpha}}(u_k(r_k z+ z_k,r_k^2 t+t_k)-u_k( z_k,t_k)),\\
     &w_k(z,t):=\frac{\eta( z_k,t_k)}{L_k r_k^{\alpha}}(u_k(r_k z+ z_k,r_k^2 t+t_k)-u_k(z_k,t_k)),
\end{aligned}
\end{align}
    for $(z,t)\in Q(k)$.     Then, we distinguish two cases:
    
    \textbf{Case 1:} $$\frac{y_k}{r_k}=\frac{d_p(P_k,\Sigma)}{r_k}\to+\infty,$$ as $k\to+\infty$. In this case we have $Q^\infty=\mathbb{R}^{N+2}$.
    
    \textbf{Case 2:} $$\frac{y_k}{r_k}=\frac{d_p(P_k,\Sigma)}{r_k}\le C,$$ uniformly in $k$. In this case, one has $\frac{y_k}{r_k}\to l$, up to pass to a subsequence and so $Q^\infty=\mathbb{R}^{N}\times\{y\ge l\}\times\mathbb{R}$.
    
    \
    
    \emph{Step 2: Estimate of the parabolic Hölder seminorm of $v_k$.}  Let us fix a compact set $K\subset Q^\infty$. Then, $K\subset Q(k)$ for any $k$ large. For every $P=(z,t)$, $Q=(\xi,\tau)\in K$, $P\not= Q$, we have
   \begin{align*}
    |v_k(z,t)-v_k(\xi,\tau)| &\le\frac{|(\eta u_k)(r_k z+ z_k,r_k^2 t+t_k)-(\eta u_k)(r_k \xi+ z_k,r_k^2 \tau+t_k)|}{L_k r_k^{\alpha}}\\
    &+\frac{|u_k( z_k,t_k)||\eta (r_k z+ z_k,r_k^2 t+t_k)-\eta (r_k \xi+ z_k,r_k^2 \tau+t_k)|}{L_k r_k^{\alpha}}\\
    &\le d_p(P,Q)^\alpha+\frac{\|u_k\|_{L^\infty(Q^+_{3/4})}M d_p((r_k z,r_k^2 t),(r_k \xi,r_k^2 \tau))}{L_k r_k^\alpha}\\
     &\le  d_p(P,Q)^\alpha+\frac{C M r_k^{1-\alpha} d_p(P,Q)}{L_k}.
\end{align*}
Then, as $k\to+\infty$
\begin{equation}\label{stima-hölder-per-C^0,alpha}
    \frac{|v_k(P)-v_k(Q)|}{d_p(P,Q)^\alpha}\le 1+o(1).
\end{equation}

\

\emph{Step 3: The sequences $v_k$ and $w_k$ converge to the same limit $w$.} 
Notice that $v_k(0)=0$ for every $k$. Then, by \eqref{stima-hölder-per-C^0,alpha}, we have that $\|v_k\|_{C^{0,\alpha}_p(K)}$ is uniformly bounded for every compact subset $K\subset Q^\infty$. By the Arzelà-Ascoli theorem, we can pass to a subsequence $v_k$ satisfying $v_k\to w$ uniformly in $K$ and, taking the limit in \eqref{stima-hölder-per-C^0,alpha}, one has $w\in C^{0,\alpha}_p(K)$ with $\|w\|_{C^{0,\alpha}_p(K)}\le1$. 
Moreover, by a countable compact exhaustion of $Q^\infty$, we have that $w$ is globally $C^{0,\alpha}_p$-continuous in $Q^\infty$, that is
\begin{equation}\label{eq:est:w:C^0,alpha}
[w]_{C^{0,\alpha}_p(Q^\infty)}\le 1.
\end{equation}
Furthermore, fixed $K\subset Q^\infty$ compact, for every $P=(z,t)\in K$ one has
\begin{align*}
    |v_k(P)-w_k(P)|&=\frac{|(u_k(r_k z+ z_k,r_k^2 t+t_k)-u_k( z_k,t_k))(\eta_k(r_k z+ z_k,r_k^2 t+t_k)-\eta_k( z_k,t_k))|}{L_k r_k^\alpha}\\
    &\le \frac{2\|u_k\|_{L^\infty(Q^+_{4/5})}r_k M d_p(P,0)}{L_k r_k^\alpha}\rightarrow 0.
\end{align*}
In other words, the sequences $v_k$ and $w_k$ have the same asymptotic behavior as $k\to+\infty$ on $K\subset Q^\infty$ and this implies that $w_k\to w$ uniformly in $K$. 

\

\emph{Step 4: $w$ is not constant.} 
First, $w(0)=0$, since $v_k(0)=0$ for every $k$.
Let us consider the sequence of points $$S_k:=\left(\frac{\xi_k-z_k}{r_k},\frac{\tau_k-t_k}{r_k^2}\right)\in Q(k).$$
Since $d_p(S_k,0)=1$ for any $k$, 
we have $S_k\to\Bar{S}$, up to consider a subsequence. Then, as $k\to+\infty$
\begin{align*}
    |v_k(S_k)|&=\left|\frac{\eta(\bar{P}_k)(u_k(\bar{P}_k)-u_k(P_k))}{L_k r_k^\alpha}\right|\\
    &=\left|\frac{(\eta u_k)(\bar{P}_k)- (\eta u_k)(P_k)+(\eta u_k)(P_k)-\eta (\bar{P}_k)u_k(P_k)}{L_k r_k^\alpha}\right|\\
    &\ge\left|\frac{(\eta u_k)(\bar{P}_k)- (\eta u_k)(P_k)}{L_k r_k^\alpha}\right|-\left|\frac{u_k(P_k)(\eta (\bar{P}_k)-\eta(P_k))}{L_k r_k^\alpha}\right|\\
    &\ge\frac{1}{2}-\frac{\|u_k\|_{L^\infty(Q^+_{3/4})}Mr_k}{L_k r_k^\alpha}=\frac{1}{2}+o(1).
\end{align*}
Then, as $k\to+\infty$, we obtain that $w(\Bar{S})\ge\frac{1}{2}$; that is, $w$ is not constant.

\

\emph{Step 5: $w$ is an entire solution to a homogeneous equation with constant coefficients.} 
First, we observe that, defining $A_k(z,t):=A(r_k z+z_k,r_k^2 t+t_k)$ and $(\Bar{z},\Bar{t}):=\lim_{k\to+\infty}(z_k,t_k)$, by continuity we can define $\bar{A}:=\lim_{k\to+\infty}A_k(z,t)=A(\Bar{z},\Bar{t})$, which is a constant coefficients simmetric matrix satisfying \eqref{eq:UnifEll}. 

Let us consider $\phi\in C_c^\infty(Q^\infty)$, such that $\supp(\phi)\subset Q(k)$ for any $k$ large, and define $\Tilde{\phi}(z,t):=\phi(\frac{z-z_k}{r_k},\frac{t-t_k}{r_k^2})\in C_c^\infty(Q_1^+)$. Since $u_k$ is a solution to \eqref{eq-epsilon-solution-C-alpha}, by explicit computations, we have
\begin{align*}
    &-\int_{Q(k)}\rho_{\varepsilon_k}^a(r_k y+ y_k) w_k\partial_t \phi +
    \int_{Q(k)}\rho_{\varepsilon_k}^a(r_k y+ y_k) A_k\nabla w_k\cdot\nabla \phi  \\
    &= \frac{(\eta u_k)(z_k,t_k)}{L_k r_k^\alpha}\int_{Q(k)}\rho_{\varepsilon_k}^a(r_k y+ y_k) \partial_t \phi 
    +
    \frac{\eta(z_k,t_k)}{L_k r_k^\alpha}\Big(
    -\int_{Q_1^+}\rho_{\varepsilon_k}^a(y) u_k \partial_t \Tilde{\phi}
    +\int_{Q_1^+}\rho_{\varepsilon_k}^a(y) A \nabla u_k\cdot\nabla \Tilde{\phi}
    \Big)r_k^{-N-1}\\
    &= \frac{\eta(z_k,t_k)r_k^{-N-1-\alpha}}{L_k}\int_{Q_1^+}\rho_{\varepsilon_k}^a(y) (f_{\varepsilon_k}\Tilde{\phi}-F_{\varepsilon_k}\cdot\nabla\Tilde{\phi})\\
    &=\frac{\eta(z_k,t_k)r_k^{2-\alpha}}{L_k}\int_{Q(k)}\rho_{\varepsilon_k}^a(r_k y+ y_k) f_{\varepsilon_k}(r_k z+z_k,r_k^2 t+t_k)\phi  \\  
    &+\frac{\eta(z_k,t_k)r_k^{1-\alpha}}{L_k}\int_{Q(k)}\rho_{\varepsilon_k}^a(r_k y+ y_k)F_{\varepsilon_k} (r_k z+z_k,r_k^2 t+t_k)\cdot\nabla\phi.
\end{align*}
So, $w_k$ is a solution in $Q(k)$ to
\begin{align}
\begin{aligned}\label{equazione-w_k-c^alpha}
    \rho_{\varepsilon_k}^{a}(r_k \cdot+y_k)\partial_t w_k-{\div}(\rho_{\varepsilon_k}^{a}(r_k \cdot+ y_k)A_k\nabla w_k)
    &=\rho_{\varepsilon_k}^{a}(r_k \cdot+y_k)\frac{\eta(z_k,t_k)r_k^{2-\alpha}}{L_k}f_{\varepsilon_k}(r_k\cdot+z_k,r_k^2\cdot+t_k)\\
    &+\frac{\eta(z_k,t_k)r_k^{1-\alpha}}{L_k}{\div}(\rho_{\varepsilon_k}^{a}(r_k \cdot+ y_k)F_{\varepsilon_k}(r_k\cdot+z_k,r_k^2\cdot+t_k)).
\end{aligned}
\end{align}
Notice that in \textbf{Case 2} the function $w_k$ satisfies a conormal boundary condition on the hyperplane $\{y=\frac{y_k}{r_k}\}$ too.

Next, we normalize the equation \eqref{equazione-w_k-c^alpha} in the following way: let us define $\Gamma_k:=(\varepsilon_k,y_k,r_k)$ and $\nu_k:=|\Gamma_k|$, which is bounded from above, since $r_k\to0$, $\varepsilon_k\to0$ and $y_k\to\Bar{y}\in[0,1)$. Let 
$$\Tilde{\Gamma}_k:=\frac{\Gamma_k}{\nu_k}=\left(\frac{\varepsilon_k}{\nu_k},\frac{y_k}{\nu_k},\frac{r_k}{\nu_k}\right)=(\Tilde{\varepsilon_k},\Tilde{y}_k,\Tilde{r}_k).$$
Since $|\Tilde{\Gamma_k}|=1$ for every $k$, up to consider a subsequence,
$\Tilde{\Gamma}_k\to\Tilde{\Gamma}=(\Tilde{\varepsilon},\Tilde{y},\Tilde{r})$. Denoting 
$$\Tilde{\rho}_k^a(y):=\frac{\rho_{\varepsilon_k}^a(r_k y+y_k)}{\nu_k^a}=(\Tilde{\varepsilon}_k^2+(\Tilde{r}_k y+\Tilde{y}_k)^2)^{a/2},$$
and
$$\Tilde{\rho}^a(y):=(\Tilde{\varepsilon}^2+(\Tilde{r} y+\Tilde{y})^2)^{a/2},$$
we have that $\Tilde{\rho}_{k}^a\to\tilde{\rho}^a$ a.e. in $Q^\infty$. By multiplying the equation \eqref{equazione-w_k-c^alpha} by $\nu_k^{-a}$ we get that $w_k$ solves
\begin{align}
\begin{aligned}\label{equazione-w_k-c^alpha-normalized}
    \tilde{\rho}_k^{a}\partial_t w_k-{\div}(\tilde{\rho}_k^{a}A_k\nabla w_k)
    &=\tilde{\rho}_k^{a}\frac{\eta(z_k,t_k)r_k^{2-\alpha}}{L_k}f_{\varepsilon_k}(r_k\cdot+z_k,r_k^2\cdot+t_k)\\
    &+\frac{\eta(z_k,t_k)r_k^{1-\alpha}}{L_k}{\div}(\tilde{\rho}_k^{a}F_{\varepsilon_k}(r_k\cdot+z_k,r_k^2\cdot+t_k)).
\end{aligned}
\end{align}
We claim that the r.h.s. of \eqref{equazione-w_k-c^alpha-normalized} vanishes in a distributional sense as $k\to+\infty$. 
Indeed, fixed  $\phi\in C_c^\infty(Q^\infty)$  with $\supp(\phi)\subset Q(k)$ for any $k$ large, we have the following estimate
\begin{align*}
   &\Big|\int_{\supp(\phi)}\rho_{\varepsilon_k}^a(r_k y+ y_k)f_{\varepsilon_k}(r_k z+z_k,r_k^2 t+t_k)\phi(z,t)dzdt
    \Big|\\
    & \le\|\phi\|_{L^\infty(\mathbb{R}^{N+2})}\Big(\int_{\supp(\phi)}\rho_{\varepsilon_k}^a(r_k y+ y_k)|f_{\varepsilon_k}(r_k z+z_k,r_k^2 t+t_k)|^pdzdt
    \Big)^{1/p}\cdot\Big(\int_{\supp(\phi)}\rho_{\varepsilon_k}^a(r_k y+ y_k)dzdt\Big)^{1/p'}\\
    & \le C\|\phi\|_{L^\infty(\mathbb{R}^{N+2})}\Big(\int_{Q_1^+}(\varepsilon_k^2+ \xi_{N+1}^2)^{a/2}|f_{\varepsilon_k}(\xi,\tau)|^p r_k^{-(N+3)}d\xi d\tau
    \Big)^{1/p} \nu_k^{a/p'}\\
    & \le C\|f_{\varepsilon_k}\|_{L^p(Q_1^+,\rho_\varepsilon^a)}r_k^{-\frac{N+3}{p}}\nu_k^{\frac{a}{p'}} \le C r_k^{-\frac{N+3}{p}}\nu_k^{\frac{a}{p'}}.
\end{align*}
So, we can estimate the first member of the r.h.s. of \eqref{equazione-w_k-c^alpha-normalized} as follows
\begin{align*}
&\frac{\eta(z_k,t_k)r_k^{2-\alpha}\nu_k^{-a}}{L_k}\Big|\int_{\supp(\phi)}\rho_{\varepsilon_k}^a(r_k y+ y_k)f_{\varepsilon_k}(r_k z+z_k,r_k^2 t+t_k)\phi(z,t)dzdt
    \Big|\\    
    &\le C \nu_k^{-a} \frac{\eta(z_k,t_k)r_k^{2-\alpha}}{L_k}r_k^{-\frac{N+3}{p}}\nu_k^{\frac{a}{p'}}\le C r_k^{2-\alpha-\frac{N+3+a^+}{p}}\left(\frac{r_k^{a^+}}{\nu_k^a}\right)^{1/p}\to0,
\end{align*}
since $r_k\le\nu_k$ and  $\alpha<2-\frac{N+3+a^+}{p}$. Similarly, the second term of the r.h.s. of \eqref{equazione-w_k-c^alpha-normalized} vanishes as well.

Finally, we prove that the l.h.s. of \eqref{equazione-w_k-c^alpha-normalized} converges in the following sense
\begin{equation}\label{eq:C^alpha:conv:lhs}
\int_{Q^\infty}\tilde{\rho}_k^a (-w_k\partial_t \phi+A_k \nabla w_k\cdot\nabla\phi)\to\int_{Q^\infty}\tilde{\rho}^a (-w\partial_t \phi+\bar{A} \nabla w\cdot\nabla\phi).
\end{equation}
Let us fix $R>0$ such that $\supp({\phi})\subset Q_R\cap Q^\infty$ and observe that $Q^\infty=B^\infty\times\mathbb{R}$. Since $\{w_k\}$ is uniformly bounded in $L^\infty(Q_{2R}\cap Q^\infty)$ one has that $\{w_k\}$ is uniformly bounded in $L^2(Q_{2R}\cap Q^\infty,\tilde{\rho}_k^a)$. Then, by using the Caccioppoli inequality \eqref{caccioppoli.inequality}, we get that $\{w_k\}$ is uniformly bounded in $L^2(-R^2,R^2;H^1(B_R\cap B^\infty,\Tilde{\rho}_k^a))\cap L^\infty(-R^2,R^2;L^2(B_R\cap B^\infty,\Tilde{\rho}_k^a))$. Using the a.e. convergences $A_k(z,t)\to \bar{A}$ and $\tilde{\rho}_k^a\to\Tilde{\rho}^a$, we are able to apply Lemma \ref{rho-to-y}, with minor changes, and the convergence \eqref{eq:C^alpha:conv:lhs} holds.

This convergence, combined with the previous ones, tells us that $w$ is an entire solution to
\begin{equation}\label{equazione omogenea C-alpha Case1}
\Tilde{\rho}^a \partial_t w-{\div}(\Tilde{\rho}^a \bar{A}\nabla w)=0,\hspace{0.4cm}\text{in }\mathbb{R}^{N+2},
\end{equation}
in \textbf{Case 1} while, in \textbf{Case 2}, $w$ is an entire solution to
\begin{equation}\label{equazione omogenea C-alpha Case2}
       \begin{cases}
       \Tilde{\rho}^a \partial_t w-{\div}(\Tilde{\rho}^a \bar{A}\nabla w)=0,&\text{in }\mathbb{R}^{N+1}_+\times\mathbb{R},\\
       \Tilde{\rho}^a \bar{A}\nabla w\cdot e_{N+1}=0&\text{on }\mathbb{R}^{N}\times\{y=l\}\times\mathbb{R}.
   \end{cases}
   \end{equation}

\

\emph{Step 6: Liouville theorems.} Summarizing, we have that $w$ solves \eqref{equazione omogenea C-alpha Case1} or \eqref{equazione omogenea C-alpha Case2}, is globally $C^{0,\alpha}_p$-continuous in $Q^\infty$ and is not constant. By the global $C^{0,\alpha}_p$-continuity \eqref{eq:est:w:C^0,alpha}, it follows that 
$$|w(z,t)|\le|w(z,t)-w(0,0)|+|w(0,0)|\le (|z|^2+|t|)^{\alpha/2}.$$
In \textbf{Case 1}, since $\frac{y_k}{r_k}\to+\infty$, we have
\begin{align*}
    \tilde{\rho}_k^a(y)&=\left(
\frac{1}{\nu_k^2}\left(\varepsilon_k^2+y_k^2\left(\frac{r_k}{y_k}y+1\right)^2\right)\right)^{a/2} =
\left(\tilde{\varepsilon}_k^2+\tilde{y_k}^2\left(\frac{r_k}{y_k}y+1\right)^2\right)^{a/2}\to(\Tilde{\varepsilon}^2+\Tilde{y}^2)^{a/2},
\end{align*}
which is a positive constant. Then, 
by the classical Liouville theorem for the heat equation, see Remark \ref{liouville-matrice-non-costante}, and the above growth condition,
 the solution $w$ must be constant and this is a contradiction. 
 
In \textbf{Case 2}, $\frac{y_k}{r_k}\le C$, uniformly in $k$ and $\frac{\Tilde{y_k}}{\Tilde{r_k}}=\frac{y_k}{r_k}\to \frac{\Tilde{y}}{\Tilde{r}}=l$. Up to consider a translation of $\frac{\Tilde{y}}{\Tilde{r}}=l$, we can assume $\Tilde{y}=0$ and then $\Tilde{\rho}^a(y)=(\Tilde{\varepsilon}^2+\Tilde{r}^2y^2)^{a/2}$. There are three possibilities:
\begin{itemize}
    \item $\tilde{\varepsilon}=0,\tilde{r}\not=0$, $\Tilde{\rho}^a(y)=|y|^a$.
    \item $\tilde{\varepsilon}\not=0,\tilde{r}=0$, $\Tilde{\rho}^a(y)=1.$
    \item $\tilde{\varepsilon}\not=0,\tilde{r}\not=0$, $\Tilde{\rho}^a(y)=(1+y^2)^{a/2}$, up to a dilation of $\frac{\tilde\varepsilon}{\Tilde{r}}$.
\end{itemize}
In any case, we can invoke Liouville Theorem \ref{teo liouville 1} in $\mathbb{R}^{N+1}_+\times\mathbb{R}$ and by Remark \ref{liouville-matrice-non-costante} we obtain again a contradiction.
\end{proof}

\section{Hölder estimates for the gradient}\label{section7}

\begin{teo}\label{teo-C1 alpha epsilon}
    Let $N\ge1$, $a>-1$, $p>N+3+a^+$, $\alpha\in (0,1-\frac{N+3+a^+}{p})$. Let $A\in C^{0,\alpha}_p(Q_1^+)$ be a matrix satisfying \eqref{eq:UnifEll}. As $\varepsilon\to0$ let $\{u_\varepsilon\}$ be a family of solutions to
    \begin{equation}\label{eq-epsilon-solution-C1-alpha}
        \begin{cases}
        \rho_\varepsilon^{a}\partial_t u_\varepsilon-{\div }(\rho_\varepsilon^{a} A\nabla u_\varepsilon)=\rho_\varepsilon^{a}f_\varepsilon+{\div}(\rho_\varepsilon^{a} F_\varepsilon) &{\rm in  }\hspace{0.1cm}Q_1^+,\\
        \rho_\varepsilon^a(A\nabla u_\varepsilon+F)\cdot e_{N+1}=0&{\rm on  }\hspace{0.1cm}\partial^0Q_1^+.
    \end{cases}
    \end{equation}
Then, there exists a constant $C>0$ depending on $N$, $a$, $\lambda$, $\Lambda$, $p$, $\alpha$ and $\|A\|_{C^{0,\alpha}_p(Q_{1}^+)}$ such that
    $$\|u_\varepsilon\|_{C^{1,\alpha}_p(Q_{1/2}^+)}\le C\left(
    \|u_\varepsilon\|_{L^2(Q_1^+,\rho_\varepsilon^a)}+
    \|f_\varepsilon\|_{L^p(Q_1^+,\rho_\varepsilon^a)}+
    \|F_\varepsilon\|_{C^{0,\alpha}_p(Q_1^+)}
    \right).$$
\end{teo}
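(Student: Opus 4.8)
The plan is to run a contradiction and blow‑up argument completely parallel to the proof of Theorem~\ref{teo C^0-alpha}, but now one order higher: at each stage we subtract from $u_\varepsilon$ its first–order parabolic Taylor polynomial (value plus spatial gradient at the center) and rescale by $L_k r_k^{1+\alpha}$. First I would normalize, assuming as in Theorem~\ref{teo C^0-alpha} that the right–hand side of the claimed estimate is $\le C$ uniformly in $\varepsilon\to0^+$. Since $\rho_\varepsilon^a$ is smooth and, locally, bounded above and away from $0$, each $u_\varepsilon$ solves a uniformly parabolic Neumann problem with $C^{0,\alpha}_p$ data, so classical Schauder theory gives $u_\varepsilon\in C^{1,\alpha}_p$ up to $\partial^0Q_1^+$ (with $\varepsilon$‑dependent constants), hence the seminorms below are finite; moreover $\|u_\varepsilon\|_{L^\infty(Q^+_{3/4})}\le C$ uniformly by Proposition~\ref{moser-3.5}. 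Fixing a cut‑off $\eta$ as in the proof of Theorem~\ref{teo C^0-alpha}, I would suppose by contradiction that $L_k:=[\eta u_{\varepsilon_k}]_{C^{1,\alpha}_p(Q^+_1)}\to+\infty$, pick $P_k=(z_k,t_k),\bar P_k=(\xi_k,\tau_k)\in Q^+_{3/4}$ almost realizing the worst ratio in the seminorm (either for some $\partial_i(\eta u_k)$ or for the $C^{\frac{1+\alpha}{2}}_t$ part), set $r_k:=d_p(P_k,\bar P_k)$, check $r_k\to0$ using the uniform $L^\infty$ and interior gradient bounds, and on the rescaled domains $Q(k)\to Q^\infty$ introduce
\[
w_k(z,t):=\frac{\eta(z_k,t_k)}{L_k r_k^{1+\alpha}}\Big(u_k(r_kz+z_k,r_k^2t+t_k)-u_k(z_k,t_k)-r_k\,\nabla u_k(z_k,t_k)\cdot z\Big),
\]
together with the analogous $v_k$ carrying the full factor $\eta(r_kz+z_k,\dots)$. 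As in Theorem~\ref{teo C^0-alpha} there are two regimes: \textbf{Case 1}, $y_k/r_k\to+\infty$ with $Q^\infty=\mathbb{R}^{N+2}$, and \textbf{Case 2}, $y_k/r_k\to l$ (so in particular $y_k\to0$) with $Q^\infty$ a half–space.

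The compactness and non‑triviality steps go as in Steps~2--4 of the proof of Theorem~\ref{teo C^0-alpha}: the choice of $r_k$ gives $[\nabla w_k]_{C^{0,\alpha}_p(K)}\le1+o(1)$ and $[w_k]_{C^{\frac{1+\alpha}{2}}_t(K)}\le1+o(1)$ on every compact $K\subset Q^\infty$, and since $w_k(0)=0$ and $\nabla w_k(0)=0$ this yields the \emph{uniform} growth control $|w_k(z,t)|\le C(|z|^2+|t|)^{\frac{1+\alpha}{2}}$ on compacts (uniform even though $|\nabla u_k(z_k,t_k)|$ need not be bounded, because the normalization absorbs it). By Arzelà--Ascoli and a diagonal exhaustion of $Q^\infty$, along a subsequence $w_k\to w$ and $v_k\to w$ in $C^1_{\rm loc}$, with $[\nabla w]_{C^{0,\alpha}_p(Q^\infty)}\le1$, $[w]_{C^{\frac{1+\alpha}{2}}_t(Q^\infty)}\le1$, $w(0)=0$, $\nabla w(0)=0$, and — evaluating at the rescaled point $S_k=\big((\xi_k-z_k)/r_k,(\tau_k-t_k)/r_k^2\big)\to\bar S$, as in Step~4 there — $w$ is \emph{not} a first–order polynomial in $(z,t)$ (either $\nabla w$ is non‑constant, or $w$ genuinely depends on $t$).

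Next I would identify the limiting equation, following Step~5 of the proof of Theorem~\ref{teo C^0-alpha}. Freeze the coefficients: $A_k(z,t):=A(r_kz+z_k,r_k^2t+t_k)\to\bar A:=A(\bar z,\bar t)$ uniformly on compacts (here $A\in C^{0,\alpha}_p$ is used), and with $\nu_k:=|(\varepsilon_k,y_k,r_k)|$ the normalized weights $\tilde\rho_k^a:=\nu_k^{-a}\rho_{\varepsilon_k}^a(r_k\cdot+y_k)$ converge a.e.\ to $\tilde\rho^a$, which is a positive constant in Case~1 and, after a harmless translation and dilation, one of $|y|^a$, $1$, $(1+y^2)^{a/2}$ in Case~2. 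Writing the rescaled weak formulation for $w_k$: the $f$‑term is $O\!\big(r_k^{\,1-\alpha-\frac{N+3+a^+}{p}}\big)\to0$ exactly because $\alpha<1-\frac{N+3+a^+}{p}$; for the $F$‑term I split $F_{\varepsilon_k}(r_k\cdot+z_k,\dots)=F_{\varepsilon_k}(z_k,t_k)+\big(F_{\varepsilon_k}(r_k\cdot+z_k,\dots)-F_{\varepsilon_k}(z_k,t_k)\big)$; the oscillation piece contributes $O(1/L_k)\to0$ since $F_{\varepsilon_k}\in C^{0,\alpha}_p$, while the frozen constant $F_{\varepsilon_k}(z_k,t_k)$, combined with the divergence term produced by subtracting the affine function, yields $\operatorname{div}(\tilde\rho_k^a\,c_k)$ with $c_k$ proportional to $F_{\varepsilon_k}(z_k,t_k)+A_k\nabla u_k(z_k,t_k)$; because $\tilde\rho_k^a$ depends only on $y$, only the $e_{N+1}$‑component of $c_k$ survives both in the interior and on the (rescaled) boundary, and that component is controlled by the conormal condition on $\partial^0Q_1^+$ together with $\operatorname{dist}(z_k,\Sigma)=y_k\to0$, hence vanishes in the limit. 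Passing to the limit through Lemma~\ref{rho-to-y} (with the same minor adaptations as in Theorem~\ref{teo C^0-alpha}) shows that $w$ is an entire solution of $\tilde\rho^a\partial_t w-\operatorname{div}(\tilde\rho^a\bar A\nabla w)=0$ in $\mathbb{R}^{N+2}$ in Case~1, and in $\mathbb{R}^{N+1}_+\times\mathbb{R}$ with conormal condition $\tilde\rho^a\bar A\nabla w\cdot e_{N+1}=0$ in Case~2.

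Finally, by the global $C^{1,\alpha}_p$‑continuity of $w$ one has $|w(z,t)|\le C\big(1+(|z|^2+|t|)^{\frac{1+\alpha}{2}}\big)$, i.e.\ the growth bound \eqref{growth} with $\gamma=\tfrac{1+\alpha}{2}<1$. After reducing to $\bar A=I$ by the change of variables of Remark~\ref{liouville-matrice-non-costante} and, in Case~2, identifying $\tilde\rho^a$ with a weight $\rho_\varepsilon^a$, $\varepsilon\in[0,1)$ (the case $(1+y^2)^{a/2}$ being a dilation of $\rho_\varepsilon^a$ for suitable $\varepsilon\in(0,1)$), Theorem~\ref{teo liouville 1} in Case~2 and Remark~\ref{liouville-matrice-non-costante} (constant weight/heat equation) in Case~1 force $w$ to be \emph{constant}, contradicting its non‑triviality. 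Hence $L_k$ stays bounded, which gives the asserted uniform $C^{1,\alpha}_p$ estimate, with constant depending only on $N$, $a$, $\lambda$, $\Lambda$, $p$, $\alpha$ and $\|A\|_{C^{0,\alpha}_p(Q_1^+)}$. I expect the main obstacle to be step three: rigorously showing that the frozen value of $F$ disappears from the limiting equation — this is precisely where $F\in C^{0,\alpha}_p$, the conormal boundary condition, and the $y$‑only dependence of $\rho_\varepsilon^a$ must all be used in concert — and, secondarily, the bookkeeping that routes the $\tfrac{1+\alpha}{2}$‑in‑time part of the seminorm into the same Liouville contradiction.
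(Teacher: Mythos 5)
Your overall strategy coincides with the paper's: contradiction, blow-up at scale $r_k$ with normalization $L_kr_k^{1+\alpha}$, subtraction of the first-order affine part, compactness, identification of a limiting constant-coefficient homogeneous problem, and the Liouville Theorem \ref{teo liouville 1}. However, there is a genuine gap at the point you yourself flag as "the main obstacle", and it is not where you locate it. When you freeze the coefficients against the subtracted affine function, the divergence term you must kill is not only $\div\big(\tilde\rho_k^a\,c_k\big)$ with the \emph{frozen} vector $c_k=A(z_k,t_k)\nabla u_k(z_k,t_k)+F_{\varepsilon_k}(z_k,t_k)$, but also the oscillation piece
\[
\frac{1}{L_kr_k^{\alpha}}\,\big(A(r_kz+z_k,r_k^2t+t_k)-A(z_k,t_k)\big)\nabla u_k(z_k,t_k),
\]
tested against $\nabla\phi$. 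Its size is $r_k^{\alpha}\,|\nabla u_k(z_k,t_k)|/(L_kr_k^{\alpha})=|\nabla u_k(z_k,t_k)|/L_k$, and the only a priori control on $\nabla u_k$ available at this stage is the interpolation bound $\|\nabla(\eta u_k)\|_{L^\infty}\le C(1+L_k)$, so this term is merely $O(1)$, not $o(1)$. The paper resolves this with a two-step bootstrap: first run the whole argument with a \emph{suboptimal} exponent $\beta<\alpha$, where the term is $O(r_k^{\alpha-\beta})\to0$; this yields a uniform $C^{1,\beta}_p$ bound and hence uniform boundedness of $\nabla u_k$; then rerun with the optimal $\alpha$, where the term becomes $O(\|\nabla u_k\|_\infty/L_k)\to0$. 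Your proposal never confronts this term, and without the bootstrap the limiting equation is not homogeneous.

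Two secondary points. First, your claim that the frozen conormal flux vanishes "together with $y_k\to0$" is only correct in \textbf{Case 2}; in \textbf{Case 1} one may have $y_k\to\bar y>0$, and the paper instead uses $|\partial_y\tilde\rho_k^a|\le C\,r_k/y_k$ combined with a H\"older comparison of $\eta(A\nabla u_k+F_{\varepsilon_k})$ between $z_k$ and its projection $(x_k,0)$ (where the conormal condition holds), producing the decay $(r_k/y_k)^{1-\alpha}\to0$. Second, $r_k\to0$ in \textbf{Case 2} does not follow from "uniform $L^\infty$ and interior gradient bounds" (the latter are $\varepsilon$-dependent near $\Sigma$); the paper proves it by a separate contradiction argument showing that if $r_k\to\tilde r>0$ then $v_k\to0$ locally uniformly, forcing the limit $\bar v$ to be affine, against the non-degeneracy of $\nabla\bar v$. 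These gaps are repairable along the lines of the paper's Steps 5 and 6, but as written the proposal is incomplete.
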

\begin{proof}
To simplify the notation, let $\partial_i :=\partial_{x_i} $ for $i=1,\dots,N$ and $\partial_{N+1}:=\partial_y$.  As in Theorem \ref{teo C^0-alpha}, without loss of generality, we can assume that there exists $C>0$, which is uniform in $\varepsilon\to0^+$, such that
    $$
    \|u_\varepsilon\|_{L^2(Q_1^+,\rho_\varepsilon^a)}+
    \|f_\varepsilon\|_{L^p(Q_1^+,\rho_\varepsilon^a)}+
    \|F_\varepsilon\|_{C^{0,\alpha}_p(Q_1^+)}
    \le C.$$
    
    \
    
    \emph{Step 1: Contradiction argument and blow-up sequences.} Consider a cut-off function $\eta\in C_c^\infty(Q_1^+)$ such that
    $$\eta\equiv1\hspace{0.1cm}\text{in }Q_{1/2}^+,\hspace{0.3cm}0\le\eta\le1,\hspace{0.3cm}\supp(\eta)=Q^+_{3/4}.\hspace{0.3cm}$$
    By smoothness of $\eta$, it immediately follows that $\eta\in C^{1,1}_p(Q_1^+)$; that is, there exists a constant $M>0$, which depends only on $N$, such that $\|\eta\|_{C^{1,1}_p(Q_1^+)}\le M$.

    By contradiction, let us suppose that there exist  $p>N+3+a^+$, $\alpha\in(0,1-\frac{N+3+a^+}{p})$ and a sequence of solutions $\{u_k\}:=\{u_{\varepsilon_k}\}$ as $\varepsilon_k\to0^+$ to \eqref{eq-epsilon-solution-C1-alpha}, such that
    $$\| \eta u_k\|_{C_p^{1,\alpha}(Q_{1}^+)}\to+\infty.$$
Define
  \begin{align*}
      L_k:=\max\Big\{\{[\partial_{i}(\eta u_k)]_{C^{0,\alpha}_p(Q^+_1)}:i=1,\dots,N+1\}, [\eta u_k]_{C_t^{0,\frac{1+\alpha}{2}}(Q^+_1)}\Big\},
  \end{align*}
and distinguish two cases: first, let us suppose that there exists $i\in\{1,\dots,N+1\}$ such that $L_k=[\partial_{i}(\eta u_k)]_{C^{0,\alpha}_p(Q^+_{1})}$ 
(later we will deal  with the second case, when $L_k=[\eta u_k]_{C_t^{0,\frac{1+\alpha}{2}}(Q^+_1)}$). Notice that it cannot be $\|\nabla(\eta u_k)\|_{L^\infty(Q_{1}^+)}\to+\infty$ and $[\eta u_k]_{C_p^{1,\alpha}(Q_{1}^+)}$ remains bounded, since the functions $\eta u_k$ are identically zero outside $Q_{3/4}^+$, for every $k$.

Next, we take two sequences of points $P_k=(z_k,t_k), \bar{P}_k=(\xi_k,\tau_k)\in Q^+_{3/4}$ such that 
$$\frac{|\partial_{i}(\eta u_k)(P_k)-\partial_{i}(\eta u_k)(\bar{P}_k)|}{d_p(P_k,\bar{P}_k)^\alpha}\ge \frac{1}{2}L_k\to+\infty. $$
Let $r_k:=d_p(P_k,\bar{P}_k)$, $\hat{z}_k:=(\hat{x}_k,\hat{y}_k)\in B_{3/4}^+$ be a sequence of points which will specify below. Let $\Bar{r}:=4/5$. For $k$ large let us define 
$$Q(k):=\frac{B^+_{\Bar{r}}-{\hat{z}_k}}{r_k}\times\frac{(-\Bar{r}^2-t_k,\Bar{r}^2-t_k)}{r_k^2.},$$
and set $Q^\infty:=\lim_{k\to+\infty}Q(k)$. We define two blow-up sequences as follows
\begin{align}
\begin{aligned}\label{v_k,w_k_degenerate}
&v_k(z,t):=\frac{\eta(r_k z+\hat{z}_k,r_k^2 t+t_k)}{L_k r_k^{1+\alpha}}(u_k(r_k z+\hat{z}_k,r_k^2 t+t_k)-u_k(\hat{z}_k,t_k)),\\
    &w_k(z,t):=\frac{\eta(\hat{z}_k,t_k)}{L_k r_k^{1+\alpha}}(u_k(r_k z+\hat{z}_k,r_k^2 t+t_k)-u_k(\hat{z}_k,t_k)),
\end{aligned}
\end{align}
for $(z,t)\in Q(k)$.
Then, we distinguish two cases:

    \textbf{Case 1:} $$\frac{y_k}{r_k}=\frac{d_p(P_k,\Sigma)}{r_k}\to+\infty,$$ 
as $k\to+\infty$. Since $y_k$ is uniformly bounded, we have that $r_k\to 0$ and $Q^\infty=\mathbb{R}^{N+2}$. In this case we set $\hat{z}_k=z_k$.

    \textbf{Case 2:} 
    $$\frac{y_k}{r_k}=\frac{d_p(P_k,\Sigma)}{r_k}\le C,$$ 
uniformly in $k$. We set $\hat{z}_k=(x_k,0)$ and we will show later that also in this case $r_k\to0$, which implies that $Q^\infty=\R^{N+1}_+\times\R$.

\

    \emph{Step 2: Parabolic Hölder estimates.}
    Let us fix a compact set $K\subset Q^\infty$. Then, $K\subset Q(k)$ for any $k$ large. For every $P=(z,t), Q=(\xi,\tau)\in K$, $P\not= Q$ and for every $j=1,\dots,N+1$, we have 
\begin{align*}
   |\partial_{j} v_k(P)-\partial_{j} v_k(Q)|&\le \frac{|\partial_{j}(\eta u_k)(r_k z+\hat{z}_k,r_k^2 t+t_k)-\partial_{j}(\eta u_k)(r_k \xi+\hat{z}_k,r_k^2 \tau+t_k)|}{L_kr_k^\alpha}\\
    &+\frac{|u_k(\hat{z}_k,t_k)||\partial_{j} \eta(r_k z+\hat{z}_k,r_k^2 t+t_k)-\partial_{j}\eta(r_k \xi+\hat{z}_k,r_k^2 \tau+t_k)|}{L_k r_k^\alpha}\\
    &\le  \frac{[\partial_j(\eta u_k)]_{C^{0,\alpha}_p(Q_1^+)}d_p(P,Q)^\alpha}{L_k}+\frac{\|u_k\|_{L^\infty(Q_{3/4}^+)}r_k^{1-\alpha} M d_p(P,Q)}{L_k}\\
    &\le  d_p(P,Q)^\alpha+\frac{CM}{L_k},
\end{align*}
since $[\partial_j(\eta u_k)]_{C^{0,\alpha}_p(Q_1^+)}\le L_k$, $r_k\le C$, $\|u_k\|_{L^\infty(Q^+_{3/4})}\le C$ and $d_p(P,Q)^{1-\alpha}\le C$ in $K$. By dividing the previous inequality  by $d_p(P,Q)^\alpha$ and using $L_k\to+\infty$, we get
\begin{equation}\label{5.1-degenerate}
    \sup_{\substack{P,Q\in K\\P\not=Q}}\frac{ |\partial_{j} v_k(P)-\partial_{j} v_k(Q)|}{d_p(P,Q)^\alpha}\le 1+o(1).
\end{equation}
as $k\to+\infty$. On the other hand, for every $P=(z,t),Q=(z,\tau)\in K$, $t\not= \tau$, we have that
\begin{align*}
    | v_k(P)- v_k(Q)|&\le \frac{|(\eta u_k)(r_k z+\hat{z}_k,r_k^2 t+t_k)-(\eta u_k)(r_k z+\hat{z}_k,r_k^2 \tau+t_k)|}{L_kr_k^{1+\alpha}}\\
    &+\frac{|u_k(\hat{z}_k,t_k)||\eta(r_k z+\hat{z}_k,r_k^2 t+t_k)-\eta(r_k z+x_k,r_k^2 \tau+t_k)|}{L_k r_k^{1+\alpha}}\\
    &\le  |t-\tau|^\frac{1+\alpha}{2}+\frac{\|u_k\|_{L^\infty(Q^+_{3/4})}r_k^{1-\alpha} M |t-\tau|}{L_k},
\end{align*}
so  
\begin{equation}\label{5.1-degenrate time}
    \sup_{\substack{(z,t),(z,\tau)\in K\\t\not=\tau}}\frac{|v_k(z,t)-v_k(z,\tau)|}{|t-\tau|^{\frac{1+\alpha}{2}}}\le 1+o(1).
\end{equation}
Putting together this inequality with \eqref{5.1-degenerate}, we obtain the uniform boundedness of $[v_k]_{C^{1,\alpha}_p(K)}$, noticing that these considerations are valid in both \textbf{Case 1} and \textbf{Case 2}.

\

\emph{Step 3: Convergence of blow-ups.} For $P=(z,t)\in Q(k)$, let us define
\begin{equation}\label{eq:bar:v}
\Bar{v}_k(P):=v_k(P)-\nabla v_k(0)\cdot z,\hspace{0.4cm}\Bar{w}_k(P):=w_k(P)-\nabla w_k(0)\cdot z.
\end{equation}
Notice that $\Bar{v}_k(0)=0=\Bar{w}_k(0)$ and $|\nabla \Bar{v}_k(0)|=0=|\nabla \Bar{w}_k(0)|$.
For every $K\subset Q^\infty$ compact, since $[\bar{v}_k]_{C^{1,\alpha}_p(K)}=[{v}_k]_{C^{1,\alpha}_p(K)}$, we have that $\|\bar{v}_k\|_{C^{1,\alpha}_p(K)}$ is uniformly bounded. Then, we can apply the Arzelá-Ascoli theorem and infer that $\bar{v}_k\to\Bar{v}$ in $C^{1,\gamma}_p(K)$, for any $\gamma<\alpha$. Now, passing to the limit in \eqref{5.1-degenerate} and in \eqref{5.1-degenrate time} and by a countable compact exhaustion of $Q^\infty$, we obtain that the limit function $\Bar{v}$ satisfies
\begin{equation*}
[\bar{v}]_{C^{1,\alpha}_p(Q^\infty)}\le C,
\end{equation*}
that is, $\bar{v}$ is globally $C^{1,\alpha}_p$-continuous in $Q^\infty$.

Next, we want to show that the sequence $\{\bar{w}_k\}$ converges uniformly to $\bar{v}$ on compact sets. Let us fix $K\subset Q^\infty$, such that $K\subset Q(k)$ for any $k$ large. Since $\nabla \bar v_k(0)=\nabla \bar w_k(0)$, for every $P=(z,t)\in K$, we have
\begin{align*}
    |\Bar{v}_k(P)-\Bar{w}_k(P)|&=|v_k(P)-w_k(P)|\\
    &\le\frac{|\eta(r_k z+\hat{z}_k,r_k^2 t+t_k)-\eta(\hat{z}_k,t_k)|\cdot |u_k(r_k z+\hat{z}_k,r_k^2 t+t_k)-u_k(\hat{z}_k,t_k)|}{L_k r_k^{1+\alpha}}\\
    &\le \frac{Cr_k d_p(P,0)\cdot Mr_k^\alpha d_p(P,0)^{\alpha}}{L_k r_k^{1+\alpha}} =\frac{CM d_p(P,0)^{1+\alpha}}{L_k}\to 0,
    \end{align*}
as $k\to+\infty$, by the properties of $\eta$ and the Theorem \ref{teo C^0-alpha}, which ensures local uniform bound of $u_k$ in $C^{0,\alpha}_p$-space. This implies that $\Bar{w}_k\to\Bar{v}$ uniformly in $K$.

\

\emph{Step 4: $\nabla\Bar{v}$ is not constant.} Let us define two sequences of points as
$$S_k:=\left(\frac{\xi_k-\hat{z}_k}{r_k},\frac{\tau_k-t_k}{r_k^2}\right),\quad\Bar{S_k}:=\left(\frac{z_k-\hat{z}_k}{r_k},0\right)\in Q(k).$$
In \textbf{Case 1}, one has $\hat{z}_k=z_k$, then $S_k\to S\in Q^\infty$, up to consider a subsequence, and $\Bar{S}_k=0$. Let $i\in\{1,\dots,N+1\}$ be the one that realizes the maximum of $L_k$. We can compute, as $k\to+\infty$
\begin{align*}
    |\partial_{i}\bar{v}_k(S_k)-\partial_{i}\bar{v}_k(\bar{S}_k)|&=|\partial_{i}{v}_k(S_k)-\partial_{i}v_k(0)|\\
    &=\frac{|\partial_{i}(\eta u_k)(\bar{P}_k)-\partial_{i}(\eta u_k)(P_k)-u_k(P_k)(\partial_{i}\eta(\bar{P}_k)-\partial_{i}\eta(P_k))|}{L_kr_k^\alpha}\\
    &\ge \frac{1}{2}-\frac{\|u_k\|_{L^\infty(Q^+_{3/4})}M r_k^{1-\alpha}}{L_k}=\frac{1}{2}+o(1).
\end{align*}
Then, as $k\to+\infty$, we  obtain that $|\partial_{i}\Bar{v}(S)-\partial_{i}\Bar{v}(0)|\ge\frac{1}{2}$, which implies that $\nabla \bar{v}$ is not constant.

Instead, in \textbf{Case 2}, we have $\bar{S}_k=\frac{y_k}{r_k}e_{n+1},$ which converge to a point $\Bar{S}$, up to consider a subsequence, by the fact that $\frac{y_k}{r_k}\le C$ uniformly in $k$. The sequence $S_k$ can be written as 
$$S_k=\left(\frac{\xi_k-{z}_k}{r_k},\frac{\tau_k-t_k}{r_k^2}\right)+\frac{y_k}{r_k}e_{N+1},$$
and still converges, up to a subsequence, to a point $S\in Q^\infty$. So, also in this case, we have
\begin{align*}
    |\partial_{i}\bar{v}_k(S_k)-\partial_{i}\bar{v}_k(\bar{S}_k)|=|\partial_{i}{v}_k(S_k)-\partial_{i}v_k(0)|
    \ge \frac{1}{2}+o(1),
\end{align*}
which allows us to conclude that $\Bar{v}$ has non constant gradient exactly as in \textbf{Case 1}.

\

\emph{Step 5: $r_k\to0$ in  \textbf{Case 2}.} By contradiction, let us suppose that, up to consider a subsequence, $r_k\to\Tilde{r}>0$ in \textbf{Case 2}. Then, if $K\subset Q^\infty$ is a fixed compact set, we have
$$\sup_{P\in K}|{v_k(P)}|\le 2\frac{\|\eta\|_{L^\infty(Q^+_1)}\|u_k\|_{L^\infty(Q^+_{3/4})}}{L_k r_k^{1+\alpha}}\to0,$$
as $k\to+\infty$, which means that $v_k\to0$ uniformly on compact sets of $Q^\infty$. For every $P=(z,t)\in K$, by using the convergence $\Bar{v}_k\to\bar{v}$ obtained in \emph{Step 3}, one has 
$$\Bar{v}(P)=\lim_{k\to+\infty}\nabla v_k(0)\cdot z.$$
We claim that the sequence $\{\nabla v_k(0)\}_k$ is bounded. Indeed, assume by contradiction that there exists $j\in\{1,\dots,N+1\}$ such that $\{\partial_{j}v_k(0)\}$ is unbounded. Fix $R>0$ sufficiently small such that $Q_R^+$ is contained in $Q^\infty$. Then
$$|\Bar{v}(R e_j)|=R \, \lim_{k\to+\infty}|\nabla v_k(0)\cdot e_j|=R|\partial_{j}v_k(0)|\to+\infty,$$
which is in contradiction to the fact $\bar{v}\in C^{1,\alpha}_p(Q_R^+)$ and hence bounded in $Q_R^+$. Hence, up to consider a subsequence, we have that $\nabla v_k(0)\to \nu\in\mathbb{R}^{N+1}$ and $\Bar{v}(z,t)=\nu\cdot z$, which is in contradiction to the fact that $\Bar{v}$ has non constant gradient. So, we have shown that $r_k\to0$ also in \textbf{Case 2}, which implies that$Q^\infty=\mathbb{R}^{N+1}_+\times\mathbb{R}$.

\

\emph{Step 6: $\bar{v}$ is an entire solution to a homogeneous equation with constant coefficients.} First, we look at the equation satisfied by $\Bar{w}_k$ in $Q(k)$. As in Theorem \ref{teo C^0-alpha}, let us define $\nu_k=|(\varepsilon_k,\hat{y}_k,r_k)|$ and $(\tilde{\varepsilon}_k,\tilde{\hat{y}}_k,\tilde{r}_k)=(\frac{\varepsilon_k}{\nu_k},\frac{\hat{y}_k}{\nu_k},\frac{r_k}{\nu_k})$, which converges, up to consider a subsequence, to $(\Tilde{\varepsilon},\Tilde{y},\Tilde{r})$. Defining 
$$\tilde{\rho}^a_k(y)=\frac{\rho_{\varepsilon_k}^a(r_ky+\tilde{y}_k)}{\nu_k^a}=(\Tilde{\varepsilon}_k^2+(\Tilde{r}_k y+\Tilde{\hat{y}}_k)^2)^{a/2},$$ 
and 
$$\Tilde{\rho}^a(y):=(\Tilde{\varepsilon}^2+(\Tilde{r} y+\Tilde{y})^2)^{a/2},
$$
we have that $\Tilde{\rho}_k^a\to\Tilde{\rho}^a$ a.e. in $ Q^\infty$.

Let us fix $\phi\in C_c^\infty(Q^\infty)$, with $\supp(\phi)\subset Q(k)$ for any $k$ large. Then,
\begin{align}\label{equazione C^1-alpha per le w_k}
\begin{aligned}
    &\int_{\supp(\phi)}\tilde{\rho}_{k}^a(y)\Big(-\Bar{w}_k\partial_t\phi+A(r_k z+\hat{z}_k,r_k^2 t+t_k)\nabla\bar{w}_k\cdot\nabla\phi\Big)\\
    &=\frac{\eta(\hat{z}_k,t_k)r_k^{1-\alpha}\nu_k^{-a}}{L_k }\int_{\supp(\phi)}\rho_{\varepsilon_k}^a(r_ky+\hat{y}_k)f_{\varepsilon_k}(r_k z+\hat{z}_k,r_k^2 t+t_k)\phi\\
    &-\frac{\eta(\hat{z}_k,t_k)\nu_k^{-a}}{L_k r_k^\alpha}\int_{\supp(\phi)}\rho_{\varepsilon_k}^a(r_ky+\hat{y}_k)\Big( F_{\varepsilon_k}(r_k z+\hat{z}_k,r_k^2 t+t_k)-F_{\varepsilon_k}(\hat{z}_k,t_k)
    \Big)\cdot\nabla\phi\\
    &-\frac{\eta(\hat{z}_k,t_k)\nu_k^{-a}}{L_k r_k^\alpha}\int_{\supp(\phi)}\rho_{\varepsilon_k}^a(r_ky+\hat{y}_k)\Big( A(r_k z+\hat{z}_k,r_k^2 t+t_k)-A(\hat{z}_k,t_k)
    \Big)\nabla u_k(\hat{z}_k,t_k)\cdot\nabla\phi\\
    &+\frac{\eta(\hat{z}_k,t_k)\nu_k^{-a}}{L_k r_k^\alpha}\int_{\supp(\phi)}\rho_{\varepsilon_k}^a(r_ky+\hat{y}_k)\Big( A(\hat{z}_k,t_k)\nabla u_k(\hat{z}_k,t_k)+F_{\varepsilon_k}(\hat{z}_k,t_k)
    \Big)\cdot\nabla\phi.
    \end{aligned}
\end{align}
Next, we show that the r.h.s. of \eqref{equazione C^1-alpha per le w_k} vanishes in a distributional sense as $k\to+\infty$. The first member can be estimate exactly as in Theorem \ref{teo C^0-alpha}, and by the hypothesis on $p$ and $\alpha$, we obtain the desired convergence to zero. The second can be bounded as follows
\begin{align*}
    &\Big|\frac{\eta(\hat{z}_k,t_k)\nu_k^{-a}}{L_k r_k^\alpha}\int_{\supp(\phi)}\rho_{\varepsilon_k}^a(r_ky+\hat{y}_k)\Big( F_{\varepsilon_k}(r_k z+\hat{z}_k,r_k^2 t+t_k)-F_{\varepsilon_k}(\hat{z}_k,t_k)
    \Big)\cdot\nabla\phi \Big|\\
    &\le\frac{\nu_k^{-a}\|\nabla\phi\|_{L^\infty(Q^\infty)} }{L_k r_k^a}  \int_{\supp(\phi)}\rho_{\varepsilon_k}^a(r_ky+\hat{y}_k)C r_k^\alpha (|z|+|t|^{1/2})^{\alpha}
    \le \frac{C}{L_k}\to 0,
\end{align*}
as $k\to+\infty$. In the previous inequalities, we have used the uniform boundedness of $F_{\varepsilon_k}$ in $C^{0,a}_p$-space and the estimate $\int_{\supp(\phi)}\Tilde{\rho}_k^a\le C$.

Next, we show that the fourth member vanishes. First, we can rewrite it as 
\begin{align}\label{estaiate-boundary.term}
\begin{aligned}
    &\frac{\eta(\hat{z}_k,t_k)\nu_k^{-a}}{L_k r_k^\alpha}\int_{\supp(\phi)}\rho_{\varepsilon_k}^a(r_ky+\hat{y}_k)\Big( A(\hat{z}_k,t_k)\nabla u_k(\hat{z}_k,t_k)+F_{\varepsilon_k}(\hat{z}_k,t_k)
    \Big)\cdot\nabla\phi \\
    &=\frac{\eta(\hat{z}_k,t_k)\nu_k^{-a}}{L_k r_k^\alpha}\int_{\supp(\phi)}{\div}\Big(\rho_{\varepsilon_k}^a(r_ky+\hat{y}_k)(A(\hat{z}_k,t_k)\nabla u_k(\hat{z}_k,t_k)+F_{\varepsilon_k}(\hat{z}_k,t_k))\phi \Big)\\
    &+\frac{\eta(\hat{z}_k,t_k)\nu_k^{-a}}{L_k r_k^\alpha}\int_{\supp(\phi)}\partial_y\big(\rho_{\varepsilon_k}^a(r_ky+\hat{y}_k)\big)(A(\hat{z}_k,t_k)\nabla u_k(\hat{z}_k,t_k)+F_{\varepsilon_k}(\hat{z}_k,t_k))\cdot e_{N+1}\phi .
    \end{aligned}
\end{align}
By using the divergence theorem, we can rewrite the first member in \eqref{estaiate-boundary.term} as
\begin{align*}
    &\int_{\supp(\phi)}{\div}\bigg(\rho_{\varepsilon_k}^a(r_ky+\hat{y}_k)(A(\hat{z}_k,t_k)\nabla u_k(\hat{z}_k,t_k)+F_{\varepsilon_k}(\hat{z}_k,t_k))\phi \bigg)dzdt\\
    &={\int_{\partial\{\supp(\phi)\}}}\bigg(\rho_{\varepsilon_k}^a(r_ky+\hat{y}_k)(A(\hat{z}_k,t_k)\nabla u_k(\hat{z}_k,t_k)+F_{\varepsilon_k}(\hat{z}_k,t_k))\phi \bigg)d\sigma,
\end{align*}
and observe that this is equal to zero. In fact, in \textbf{Case 1} we have $Q^\infty=\mathbb{R}^{N+2}$ and $\phi$ has compact support. Instead, in \textbf{Case 2}, since $\hat{z}_k$ lies on the flat boundary, the term vanishes by the conormal boundary condition satisfied by $u_k$.

The second term in the r.h.s. of \eqref{estaiate-boundary.term} vanishes too. In \textbf{Case 2} it is identically zero since $(A\nabla u_k+F_{\varepsilon_k})(\hat{z}_k,t_k)=0$ by the conormal boundary condition.
Let us consider \textbf{Case 1} and recall that $\hat{y}_k=y_k$ and $\frac{r_k}{y_k}\to0$ as $k\to+\infty$. Then, on compact subsets of $\mathbb{R}^{N+2}$, one has the following estimate
\begin{align*}
    \big|\nu_k^{-a}\partial_y[\rho_{\varepsilon_k}^a(r_k y+{y_k})]\big|&=\Big|\nu_k^{-a} ar_k \rho_{\varepsilon_k}^a(r_k y+{y_k})\frac{r_k y+{y}_k}{\varepsilon_k^2+(r_k y+{y}_k)^2}\Big| \le a\Tilde{\rho}_k^a(y)\frac{r_k}{y_k}\left|\frac{\frac{r_k}{y_k}y+1}{\frac{\varepsilon_k^2}{y_k^2}+\left(\frac{r_k}{y_k}y+1\right)^2}\right|\le C \frac{r_k}{y_k}.
\end{align*}
Next, let $(\zeta_k,t_k)=(x_k,0,t_k)$ be the projection of $(\hat{z}_k,t_k)=(z_k,t_k)$ on the hyperplane $\{y=0\}$.
By the conormal boundary condition, we have that $[\eta(A\nabla u_k+F_{\varepsilon_k})](\zeta_k,t_k)\cdot e_{N+1}=0$, so
\begin{align*}
    &\frac{\eta(\hat{z}_k,t_k)\nu_k^{-a}}{L_k r_k^\alpha}\int_{\supp(\phi)}\partial_y\big(\rho_{\varepsilon_k}^a(r_ky+\hat{y}_k)\big)(A\nabla u_k+F_{\varepsilon_k})(\hat{z}_k,t_k)\cdot e_{N+1}\phi(z,t)dzdt\\
     &=\frac{\nu_k^{-a}}{L_k r_k^\alpha}\int_{\supp(\phi)}\partial_y\big(\rho_{\varepsilon_k}^a(r_ky+\hat{y}_k)\big)\big[\eta(A\nabla u_k+F_{\varepsilon_k})\big](\hat{z}_k,t_k)\cdot e_{N+1}\phi(z,t)dzdt     \\
      &-\frac{\nu_k^{-a}}{L_k r_k^\alpha}\int_{\supp(\phi)}\partial_y\big(\rho_{\varepsilon_k}^a(r_ky+\hat{y}_k)\big)\big[\eta(A\nabla u_k+F_{\varepsilon_k})\big](\zeta_k,t_k)\cdot e_{N+1}\phi(z,t)dzdt.
\end{align*}
We can estimate
\begin{align*}
    &\Big|\big[\eta(A\nabla u_k+F_{\varepsilon_k})\big](\hat{z}_k,t_k)-\big[\eta(A\nabla u_k+F_{\varepsilon_k})\big](\zeta_k,t_k)\Big|\\
    &\le \Big|A \nabla (\eta u_k)(\hat{z}_k,t_k)-A \nabla (\eta u_k)(\zeta_k,t_k)\Big|+\Big|u_kA\nabla\eta (\hat{z}_k,t_k)-u_kA\nabla\eta (\zeta_k,t_k)\Big|  +\Big|\eta F_{\varepsilon_k}(\hat{z}_k,t_k)-\eta F_{\varepsilon_k}(\zeta_k,t_k)\Big|\\
    & \le C L_k y_k^\alpha.
\end{align*}
We remark here that in order to estimate the second term above we have used the uniform $C^{0,\gamma}_p$ regularity of $u_k$s for some chosen $\gamma\geq\alpha$. Finally, we obtain that
\begin{align*}
    \frac{\eta(\hat{z}_k,t_k)\nu_k^{-a}}{L_k r_k^\alpha}\int_{\supp(\phi)}\partial_y\big(\rho_{\varepsilon_k}^a(r_ky+\hat{y}_k)\big)(A\nabla u_k+F_{\varepsilon_k})(\hat{z}_k,t_k)\cdot e_{N+1}\phi(z,t)dzdt
    \le C\left(\frac{r_k}{y_k}\right)^{1-\alpha}\to 0.
\end{align*}
So, also in \textbf{Case 1} we obtain that the fourth member of the r.h.s. of \eqref{equazione C^1-alpha per le w_k} vanishes. 

To conclude, we prove that the third member of \eqref{equazione C^1-alpha per le w_k} goes to zero as $k\to+\infty$. Notice that
\begin{align*}  
&\Big|\eta(\hat{z}_k,t_k)\Big(A(r_k z+\hat{z}_k,r_k^2 t+t_k)-A(\hat{z}_k,t_k)
    \Big)\nabla u_k(\hat{z}_k,t_k)\Big|\\
    &=\Big| \Big(A(r_k z+\hat{z}_k,r_k^2 t+t_k)-A(\hat{z}_k,t_k)
    \Big)\nabla(\eta u_k)(\hat{z_k},t_k) -\big(A(r_k z+\hat{z}_k,r_k^2 t+t_k)-A(\hat{z}_k,t_k)
    \big)\nabla\eta(\hat{z}_k,t_k)u_k(\hat{z}_k,t_k)\Big|\\
    &\le r_k^\alpha\|\nabla(\eta u_k)\|_{L^\infty(Q^+_{3/4})}+r_k^\alpha \|\nabla \eta\|_{L^\infty(Q^+_{3/4})}\|u_k\|_{L^\infty(Q^+_{3/4})}\le C r_k^\alpha L_k,
    \end{align*}
where we have used the following parabolic Hölder interpolation inequality, see \cite{Lie96}*{Proposition 4.2}
$$\|\nabla(\eta u_k)\|_{L^\infty(Q^+_{3/4})}\le C\Big(\|\eta u_k\|_{L^\infty(Q^+_{3/4})}+[\eta u_k]_{C^{1,\alpha}_p(Q^+_{3/4})} \Big)\le C (1+L_k).$$
Then, in order to make vanish the full term we need to reason in two steps: first, one proves a uniform $C^{1,\beta}$ estimate with a given suboptimal $\beta\in(0,\alpha)$. In fact, in this case the third term vanishes as follows
\begin{align*}
    \bigg|\frac{\eta(\hat{z}_k,t_k)\nu_k^{-a}}{L_k r_k^\beta}\int_{\supp(\phi)}\rho_{\varepsilon_k}^a(r_ky+\hat{y}_k)&\Big( A(r_k z+\hat{z}_k,r_k^2 t+t_k)-A(\hat{z}_k,t_k)
    \Big) \nabla u_k(\hat{z}_k,t_k)\cdot\nabla\phi(z,t)dzdt\bigg|\\
    \le& r_k^{\alpha-\beta}\int_{\supp(\phi)}\Tilde{\rho}_k^a(y)\|\nabla \phi\|_{L^\infty(Q^\infty)}dzdt\le C r_k^{\alpha-\beta}\to0,
\end{align*}
as $k\to+\infty$. Then one can procede with the suboptimal exponent $\beta$ up to the end of the present proof. This provides uniform boundedness of the sequence $\nabla u_k$. Then, restarting the proof with the optimal $\alpha$ and the additional information above, in the previous computation we get
\begin{align*}
    \Big|\frac{\eta(\hat{z}_k,t_k)\nu_k^{-a}}{L_k r_k^\alpha}\int_{\supp(\phi)}\rho_{\varepsilon_k}^a(r_ky+\hat{y}_k)&\Big ( A(r_k z+\hat{z}_k,r_k^2 t+t_k)-A(\hat{z}_k,t_k)
    \Big)\nabla u_k(\hat{z}_k,t_k)\cdot\nabla\phi(z,t)dzdt\Big|\\
    \le& \frac{C}{L_k}\|\nabla u_k\|_{L^\infty(Q_{3/4}^+)},
\end{align*}
which converges to zero. Putting together all previous information, we have proved that the r.h.s. in \eqref{equazione C^1-alpha per le w_k} vanishes as $k\to+\infty$. 

Let $(\Bar{z},\Bar{t})=\lim_{k\to+\infty}(\Hat{z_k},t_k)$ and $\bar{A}:=\lim_{k\to+\infty} A(r_kz+\hat{z}_k,r_k^2t+t_k)$. Arguing as in Theorem \ref{teo C^0-alpha}, we can prove the convergence of the l.h.s. of  \eqref{equazione C^1-alpha per le w_k} in the following sense
$$\int_{\supp(\phi)}\tilde{\rho}_k^a \big(-w_k\partial_t \phi+A(r_kz+\hat{z}_k,r_k^2t+t_k) \nabla w_k\cdot\nabla\phi\big)\to\int_{\supp(\phi)}\tilde{\rho}^a \big(-\bar{v}\partial_t \phi+\bar{A} \nabla \bar{v}\cdot\nabla\phi\big),$$
and obtain that $\Bar{v}$ is an entire solution to
\begin{equation}\label{eq:barv:entire1}
\partial_t\bar{v}-{\div}(\bar{A}\nabla\Bar{v})=0\hspace{0.4cm}\text{in }\mathbb{R}^{N+2},
\end{equation}
in \textbf{Case 1} and $\Bar{v}$ is an entire solution to
\begin{equation}\label{eq:barv:entire2}
       \begin{cases}
       \Tilde{\rho}^{a}\partial_t\bar{v}-{\div}(\Tilde{\rho}^a\bar{A}\nabla\Bar{v})=0&\text{in }\mathbb{R}^{N+1}_+\times\mathbb{R},\\
       \Tilde{\rho}^a \bar{A}\nabla \Bar{v}\cdot e_{N+1}=0&\text{on }\mathbb{R}^{N}\times\{0\}\times\mathbb{R},
   \end{cases}
   \end{equation}
in \textbf{Case 2}.

\

\emph{Step 7: Liouville theorems.} Since $\Bar{v}$ is globally $C^{1,\alpha}_p$-continuous in $Q^\infty$, it follows that
\begin{align*}
    |2\Bar{v}(z,t)|&\le|2\Bar{v}(z,t)-\Bar{v}(0,t)-\nabla \bar{v}(0,t)\cdot z-\Bar{v}(z,0)| +|\bar{v}(0,t)|+|\nabla \bar{v}(0,t)\cdot z|+|\bar{v}(z,0)|\\
    &\le|\Bar{v}(z,t)-\Bar{v}(0,t)-\nabla \bar{v}(0,t)\cdot z|+|\Bar{v}(z,t)-\Bar{v}(z,0)|+C+C|z|+C\\
    &\le C|z|^{1+\alpha}+C|t|^{\frac{1+\alpha}{2}}+C(1+|z|)\\
    &\le C(1+(|z|^2+|t|))^{\frac{1+\alpha}{2}},
\end{align*}
The estimate above exploit a first-order expansion in the spacial variable $z$ for $t$ fixed. However, the constant $C>0$ can be chosen independently from the point $(z,t)$.

Hence, as in Theorem \ref{teo C^0-alpha}, by the growth condition above, we can apply the Liouville Theorem \ref{teo liouville 1} in both \textbf{Case 1} and \textbf{Case 2}, keeping in mind Remark \ref{liouville-matrice-non-costante}, and obtain that $\Bar{v}$ is a linear function, independent of $t$, in contradiction with the fact that $\nabla \bar v
$ is not constant.

\

\emph{Step 8: The case $L_k=[\eta u_k]_{C_t^{0,\frac{1+\alpha}{2}}(Q^+_1)}$.} In this case, the argument is similar with minor differences.
As above, we take two sequences of points $P_k=(z_k,t_k),\Bar{P}_k=(z_k,\tau_k)\in Q^+_{3/4}$, such that
\begin{equation}\label{eq:explosion:time:seminorm}
\frac{|(\eta u_k)(z_k,t_k)-(\eta u_k)(z_k,\tau_k)|}{|t_k-s_k|^\frac{1+\alpha}{2}}\ge\frac{1}{2}L_k\to+\infty.
\end{equation}
Defining $r_k:=d_p(P_k,\bar{P}_k)=|t_k-\tau_k|^{1/2}$, by \eqref{eq:explosion:time:seminorm} and the local uniform boundedness of solutions, see Proposition \ref{moser-3.5}, we get $r_k\to0$.

We define two blow-up sequences $v_k$ and $w_k$ as in \eqref{v_k,w_k_degenerate}, centered in the new blow-up sequence $P_k$, defined on the domains $Q(k)$, which are the same as above and set $Q^\infty:=\lim_{k\to +\infty}Q(k)$. 

Since $[\partial_j(\eta u_k)]_{C^{0,\alpha}_p(Q_1^+)}\le L_k$, for every $j=1,\dots,N+1$, we obtain that the estimates \eqref{5.1-degenerate} and \eqref{5.1-degenrate time} holds; that is, $[v_k]_{C^{1,\alpha}_p(K)}\le C$, uniformly in $k$, for every compact set $K\subset Q^\infty$. Defining $\bar{v}_k$ and $\bar{w}_k$ as in \eqref{eq:bar:v}, we can use the Arzelá-Ascoli theorem to obtain that $\bar{v}_k \to \bar{v}$ in $C^{1,\gamma}_p(K)$, for any $\gamma\in(0,\alpha)$, $\bar{w}_k\to \bar{v}$ uniformly on $K$ and that $\bar{v}$ is globally $C^{1,\alpha}_p$-continuous in $Q^\infty$.

The crucial difference between this case and the previous one is in \emph{Step 4}: in this case we claim that $\bar{v}$ is non constant in the variable $t$. Indeed, we have that
\begin{align*}
    &\Big|\Bar{v}_k\left(0,\frac{t_k-\tau_k}{r_k^2}\right)-\bar{v}_k(0,0)\Big|=\Big|{v}_k\left(0,\frac{t_k-\tau_k}{r_k^2}\right)\Big| = \frac{|\eta{(z_k,\tau_k)}(u_k(z_k,\tau_k)-u_k(z_k,t_k))|}{L_k r_k^{1+\alpha}}\\
    &\ge \frac{|(\eta u_k)(z_k,\tau_k)-(\eta u_k)(z_k,t_k)|}{L_k r_k^{1+\alpha}}-\frac{|(\eta(z_k,
    \tau_k)-\eta (z_k,t_k))u_k(z_k,t_k)|}{L_k r_k^{1+\alpha}}\\
    &\ge\frac{1}{2}-\frac{Mr_k^{1-\alpha}\|u_k\|_{L^\infty(Q_{3/4}^+)}}{L_k}=\frac{1}{2}+o(1),
\end{align*}
as $k\to+\infty$.
Observing that $\frac{t_k-\tau_k}{r_k^2}\to\Bar{t}\not=0$, up to consider a subsequence, we can take the limit as $k\to+\infty$ in the previous computation to obtain $|\bar{v}(0,\bar{t})-\bar{v}(0,0)|\ge\frac{1}{2}$; that is, $\bar{v}$ is non constant in $t$. 

With the same argument of \emph{Step 6} we can prove that $\bar{v}$ is an entire solution to \eqref{eq:barv:entire1} or \eqref{eq:barv:entire2}. Moreover, since $\bar{v} $ is globally $C^{1,\alpha}_p$-continuous in $Q^\infty$, then it satisfies a parabolic sub-quadratic growth condition. Hence by the Liouville theorem (Theorem \ref{teo liouville 1}), we get that $\bar{v}$ should be a linear function not depending on $t$ and this is a contradiction.
\end{proof}

Combining the uniform estimates obtained for the regularized problems and Lemma \ref{y-to-rho} in the half cylinder (see Remark \ref{rem:approx:half}), we obtain our main Theorem \ref{teo:C^1,alpha} as a byproduct.

\begin{proof}[Proof of Theorem \ref{teo:C^1,alpha}]
Let $u$ be a weak solution to \eqref{eq:1} in $Q_1^+$ in the sense of Definition \ref{def.solution-mezza-palla}. By Lemma \ref{y-to-rho} and Remark \ref{rem:approx:half}, we can find sequences $\{ {u_{\varepsilon_k}} \}_k$, $\{f_{\varepsilon_k}\}_k$, $\{F_{\varepsilon_k}\}_k$ as $\e_k\to0^+$, such that every $u_{\e_k}$ is a solution to
\[
\begin{cases}
\rho_{\varepsilon_k}^a \partial_t u_{\e_k} - \div (\rho_{\e_k}^{a} A\nabla u_{\e_k}) = \rho_{\e_k}^a f_{\e_k} + \div(\rho_{\e_k}^{a} F_{\e_k}) \quad &\text{ in }  Q_{3/4}^+\\
\rho_{\e_k}^a\left(A\nabla u_{{\e_k}}+F_{\e_k}\right)\cdot e_{N+1} = 0 \quad &\text{ in }\partial^0Q_{3/4}^+,
    \end{cases}
\]
and $u_{\varepsilon_k}\to u$ in $ L_{loc}^2(I_{3/4};H_{loc}^1(B_{3/4}\setminus\Sigma))$ as $\e_k\to0^+$. Furthermore, $f_{\e_k}$ and $F_{\e_k}$ satisfy the assumptions of Theorem \ref{teo C^0-alpha} (respectively of Theorem \ref{teo-C1 alpha epsilon}): this implies uniform boundedness of the $C^{0,\alpha}_p(Q_{1/2}^+)$-norm of $u_{\e_k}$ (respectively of the $C^{1,\alpha}_p(Q_{1/2}^+)$-norm). Then, by the Arzelà-Ascoli Theorem and by the a.e. convergences $u_{\varepsilon_k}\to u$ and $\nabla u_{\varepsilon_k}\to \nabla u$, we obtain that the estimates \eqref{eq:C0,alpha} and \eqref{eq:C1,alpha} hold true.

Finally, in the $C^{1,\alpha}_p$ case, the boundary condition \eqref{eq:boundary} follows by the $C^1(Q_{1/2}^+)$-convergence $u_{\e_k}\to u$.
\end{proof}

\subsection{Weights degenerating on curved characteristic manifolds}\label{sec:curve}
In this last section, we show how to extend the $C^{1,\alpha}$ regularity estimates to weak solutions of a class of equations having weights vanishing or exploding on curved characteristic manifolds $\Gamma$, as in \eqref{eq:1:curve}. Let us begin with the notion of weak solutions to \eqref{eq:1:curve}.

\begin{defi}\label{def.solution-curve}
Let $a>-1$ and $N\ge1$. Let $\varphi\in C^{1,\alpha}(B_1\cap\{y=0\})$ be the parametrization defined in \eqref{phi}, $\delta\in C^{1,\alpha}(\Omega^+\cap B_1)$ satisfying \eqref{delta}, $f \in L^2((\Omega^+\cap B_1)\times(-1,1),\delta^a)$ and $F \in L^2((\Omega^+\cap B_1)\times(-1,1),\delta^a)^{N+1}$. We say that $u$ is a weak solution to \eqref{eq:1:curve} 

if $u \in L^2(I_1;H^1(\Omega^+\cap B_1,\delta^a)) \cap L^\infty(I_1;L^2(\Omega^+\cap B_1,\delta^a))$ and satisfies
\[
-\int_{(\Omega^+\cap B_1)\times(-1,1)}\delta^a u\partial_t\phi \, dzdt + \int_{(\Omega^+\cap B_1)\times(-1,1)}\delta^a A\nabla u\cdot\nabla\phi \, dzdt = \int_{(\Omega^+\cap B_1)\times(-1,1)}\delta^a( f\phi -F\cdot\nabla\phi )\, dzdt,
\]
for every $\phi\in C_c^\infty(Q_1)$.
\end{defi}
\begin{proof}[Proof of Corollary \ref{cor:C^1,alpha}]
Since the proof is very similar to the one of Theorem \ref{teo:C^1,alpha}, we just sketch it highlighting the main differences.

\

\emph{Step 1. Reducing to flat characteristic manifolds by a local diffeomorphism.} Let us consider a classical diffeomorphism which straighten the hypersurface $\Gamma$,
      \[
      \Phi(x,y)=(x,y+\varphi(x)),
      \]
      which is of class $C^{1,\alpha}$ and then $C^{1,\alpha}_p$ extending constantly in the time variable. Actually, $\Phi^{-1}$ locally flattens $\Gamma$ to $\Sigma$. In fact, there exists a small radius $R>0$ such that $\Phi (B_R\cap\{y>0\})\subseteq B_1\cap\{y>\varphi(x)\}$, $\Phi(0)=\Phi^{-1}(0)=0$ and $\Phi (B_R\cap\{y=0\})\subseteq B_1\cap\{y=\varphi(x)\}$. The Jacobian associated to $\Phi$ is
       $$J_\Phi(x)=\left(\begin{array}{c|c}
       {I}_N & 0  \\ \hline
       (\D\varphi(x))^T &1 \\ 
  \end{array}\right), \quad \text{with } |\det J_\Phi|\equiv1.$$
      Up to a dilation, one has that $\tilde{u}:=u\circ(\Phi(x),t)$ is a weak solution to
\[
\begin{cases}
{\tilde\delta^a\partial_t\tilde{u}}-\div(\tilde\delta^a\tilde{A}\nabla \tilde{u})=\tilde\delta^a\tilde{f}+\div(\tilde\delta^a\tilde{F}),&\text{in }Q_1^+,\\
\displaystyle{\lim_{y\to0^+}}\tilde{\delta}^a(\tilde{A}\nabla\tilde{u}+\tilde{F})\cdot e_{N+1}=0 & \text{on }\partial^0Q_1^+.
\end{cases}   
\]   
      where $\tilde{\delta}=\delta\circ\Phi$, $\tilde{f}=f\circ(\Phi(x),t)$ and $\tilde{F}= J_\Phi^{-1} F\circ(\Phi(x),t)$ and  $\tilde{A}=(J_\Phi^{-1})(A\circ(\Phi(x),t))(J_\Phi^{-1})^T$.
      
      By \cite{TerTorVit22}*{Lemma 2.3}, $\tilde{\delta} \in C^{1,\alpha}(B_1^+)$ and satisfies
      $$\tilde{\delta}>0 \text{ in }B_1^+, \qquad \tilde{\delta}=0 \text{ on }\partial^0 B_1^+,\qquad \partial_y\tilde{\delta}>0\text{ on }\partial^0 B_1^+, \qquad\frac{\tilde{\delta}}{y}\in C^{0,\alpha}(B_1^+),\qquad \frac{\tilde{\delta}}{y}\ge\mu>0 \text{ in } \overline{B_1^+},$$
      where the last nondegeneracy condition is a consequence of the assumption $|\nabla\delta|\geq c_0>0$. Now, noticing that $\tilde{u}$ is a weak solution to 
      \begin{equation}
\begin{cases}\label{eq:curved}
y^a\left(\frac{\tilde{\delta}}{y}\right)^a\partial_t\tilde{u}-\div(y^a\bar{A}\nabla \tilde{u})=y^a\bar{f}+\div(y^a\bar{F}),&\text{ in }Q_1^+,\\
\displaystyle{\lim_{y\to0^+}}y^a(\bar{A}\nabla\tilde{u}+\bar{F})\cdot e_{N+1}=0 & \text{on }\partial^0Q_1^+.
\end{cases}   
      \end{equation}  
where $\bar{A}=\tilde{A}(\tilde{\delta}/y)^a\in C^{0,\alpha}_p(Q_1^+)$, $\bar{f}=\tilde{f}(\tilde{\delta}/y)^a \in L^p(Q_1^+,y^a)$ and $\bar{F}=\tilde{F}(\tilde{\delta}/y)^a \in C^{0,\alpha}_p(Q_1^+)$, we are taken back to an equation with the standard degenerate or singular weight $y^a$ as in \eqref{eq:1}, but with a new nondegenerate term $(\tilde{\delta}/y)^a$ in front of the time derivative.

\

\emph{Step 2. Regularity for flat characteristic manifolds with an extra term in front of the time derivative.} In what follows we show that our regularity theory applies with minor changes to weak solutions to \eqref{eq:curved}; that is, where an extra term $b$ appears in front of the time derivative in the parabolic equation. The term needs to be uniformly continuous in $B_1^+$ and bounded away from zero $b\ge\mu>0$. In the present case $b(z):=(\tilde{\delta}(z)/y)^a$, which is even H\"older continuous.

First, the energy results obtained in Sections \ref{section2}, \ref{section3}, \ref{section4} can be easily extended just using the fact that the positive term $b$ is bounded and bounded away from zero. These bounds ensure invariance of the norms involved in the functional setting.

Let us focus on the only difference, respect to the proof of Theorem \ref{teo-C1 alpha epsilon}; that is, the $C^{1,\alpha}_p$ $\e$-stable regularity of solutions with regularized weights $\rho_\e^a$ (the proof of Theorem \ref{teo C^0-alpha}, the $\e$-stability for the $C^{0,\alpha}_p$ estimate, is analogous): in order to prove that the blow-up sequence $\{\bar{w}_k\}$ (see \eqref{eq:bar:v}) converges to an entire solution with constant coefficients, one considers the limit in the equation \eqref{equazione C^1-alpha per le w_k} satisfied by $\bar{w}_k$ with the necessary modifications for the present case. The l.h.s. converges in the following sense: by using the same considerations of Lemma \ref{rho-to-y} we have that
\[
\int_{\supp(\phi)}\tilde{\rho}_{k}^a(y)\Big(-b(r_kz+\hat{z}_k)\Bar{w}_k\partial_t\phi+A(r_k z+\hat{z}_k,r_k^2 t+t_k)\nabla\bar{w}_k\cdot\nabla\phi\Big)
\to
\int_{\supp(\phi)}\tilde{\rho}^a \big(-\bar{b}\bar{v}\partial_t \phi+\bar{A} \nabla \bar{v}\cdot\nabla\phi\big),
\]
where $\bar{b}=\lim_{k\to+\infty}b(r_kz+\hat{z}_k)$ is a positive constant and $\bar{A}=\lim_{k\to+\infty}A(r_kz+\hat{z}_k,r_k^2t+t_k)$ is a constant coefficient matrix. Therefore, the contradiction argument ends up again with the use of the Liouville Theorem \ref{teo liouville 1}. Finally, by Lemma \ref{rho-to-y}, with the same considerations done in the proof of Theorem \ref{teo:C^1,alpha}, the statement follows.
\end{proof}

\section*{Acknowledgement}
The authors are research fellow of Istituto Nazionale di Alta Matematica INDAM group GNAMPA. A.A and G.F. are supported by the GNAMPA-INDAM project \emph{Teoria della regolarit\`a per problemi ellittici e parabolici con diffusione anisotropa e pesata}, CUP\_E53C22001930001. S.V. is supported by the MUR funding for Young Researchers - Seal of Excellence SOE\_0000194 \emph{(ADE) Anomalous diffusion equations: regularity and geometric properties of solutions and free boundaries}, and supported also by the PRIN project 2022R537CS \emph{$NO^3$ - Nodal Optimization, NOnlinear elliptic equations, NOnlocal geometric problems, with a focus on regularity}  by the GNAMPA-INDAM project \emph{Regolarit\`a e singolarit\`a in problemi di frontiere libere}, CUP\_E53C22001930001.




%
%
%
%
%
\begin{bibdiv}
\begin{biblist}

\bib{AthCafMil18}{article}{
title = {On the regularity of the non-dynamic parabolic fractional obstacle problem},
journal = {J. Differential Equations},
volume = {265},
pages = {2614--2647},
year = {2018},
issn = {0022-0396},
url = {https://www.sciencedirect.com/science/article/pii/S0022039618302420},
author = {Athanasopoulos, I.},
author={Caffarelli, L.},
author={Milakis, E.},}

\bib{audrito}{article}{
      author={Audrito, A.},
      title={On the existence and Hölder regularity of solutions to some nonlinear Cauchy-Neumann problems}, 
      date={2021},
      status={to appear in J. Evol. Equ., \arxiv{2107.03308}},
}

\bib{audritoterracini}{article}{
      author={Audrito, A.},
      author={Terracini, S.},
      title={On the nodal set of solutions to a class of nonlocal parabolic equations},
      year={2020},
      status={to appear in Mem. Amer. Math. Soc., \arxiv{1807.10135}}
}
\bib{BanDanGarPet21}{article}{
author = {Banerjee, A.},
author={Danielli, D.},
author ={Garofalo, N.}, 
author={Petrosyan, A.},
year = {2021},
pages = {1--52},
title = {The structure of the singular set in the thin obstacle problem for degenerate parabolic equations},
volume = {60},
journal = {Calc. Var. Partial Differential Equations},
}

\bib{banerjee}{article}{
   author={Banerjee, A.},
   author={Garofalo, N.},
   title={Monotonicity of generalized frequencies and the strong unique
   continuation property for fractional parabolic equations},
   journal={Adv. Math.},
   volume={336},
   date={2018},
   pages={149--241},
   issn={0001-8708},
}

\bib{BisSti21}{article}{
   author={Biswas, A.},
   author={Stinga, P.},
   title={Regularity estimates for nonlocal space-time master equations in
   bounded domains},
   journal={J. Evol. Equ.},
   volume={21},
   date={2021},
   pages={503--565},
   issn={1424-3199},
}

 \bib{CAFFA-MELLET-SIRE}{article}{
 author={Caffarelli, L. A.},
 author={Mellet, A.},
 author={Sire, Y.},
 title={Traveling waves for a boundary reaction-diffusion equation},
 journal={Adv. Math.},
 pages={433--457},
 date={2012},
 volume={230}

}

\bib{CafSalSil08}{article}{
   author={Caffarelli, L.},
   author={Salsa, S.},
   author={Silvestre, L.},
   title={Regularity estimates for the solution and the free boundary of the obstacle problem for the fractional Laplacian},
   journal={Invent. Math.},
   volume={171},
   date={2008},
   pages={425--461},
   issn={},
}

\bib{CafSil07}{article}{
   author={Caffarelli, L.},
   author={Silvestre, L.},
   title={An extension problem related to the fractional Laplacian},
   journal={Comm. Partial Differential Equations},
   volume={32},
   date={2007},
   pages={1245--1260},
   issn={0360-5302},
}

\bib{steklov average}{article}{
      author={Chagas, J. Q.},
      author={Diehl, N. M. L.},
      author={Guidolin, P. L.},
      title={Some properties for the Steklov averages}, 
      date={2017},
      status={\arxiv{1707.06368}}
}

\bib{ChiSer85}{article}{
   author={Chiarenza, F.},
   author={Serapioni, R.},
   title={A remark on a Harnack inequality for degenerate parabolic
   equations},
   journal={Rend. Sem. Mat. Univ. Padova},
   volume={73},
   date={1985},
   pages={179--190},
   issn={0041-8994},
}
\bib{DGPT17}{book}{
author={Danielli, D.},
author={Garofalo, N.},
author={Petrosyan, A.},
author={To, T.},
title={Optimal regularity and the free boundary in the parabolic Signorini problem},
publisher ={Memoirs AMS}, 
volume={249}, 
date={2017},
}

\bib{DongDirichlet}{article}{
author={Dong, H.},
   author={Phan, T.},
   title={Parabolic and elliptic equations with singular or degenerate coefficients: the Dirichlet problem},
   journal={Trans. Amer. Math. Soc.},
   volume={374},
   date={2021},
   pages={6611--6647},

}

\bib{DonPha23}{article}{
   author={Dong, H.},
   author={Phan, T.},
   title={On parabolic and elliptic equations with singular or degenerate
   coefficients},
   journal={Indiana Univ. Math. J.},
   volume={72},
   date={2023},
   pages={1461--1502},
   issn={0022-2518},
}

\bib{FabKenSer82}{article}{
   author={Fabes, E. B.},
   author={Kenig, C. E.},
   author={Serapioni, R. P.},
   title={The local regularity of solutions of degenerate elliptic
   equations},
   journal={Comm. Partial Differential Equations},
   volume={7},
   date={1982},
   pages={77--116},
   issn={0360-5302},
}

\bib{XX22}{book}{
   author={Fern\'{a}ndez-Real, X.},
   author={Ros-Oton, X.},
   title={Regularity theory for elliptic PDE},
   series={Zurich Lectures in Advanced Mathematics},
   volume={28},
   publisher={EMS Press, Berlin},
   date={2022},
   isbn={978-3-98547-028-0},
   isbn={978-3-98547-582-5},
}

\bib{GilTru01}{book}{
   author={Gilbarg, D.},
   author={Trudinger, N. S.},
   title={Elliptic partial differential equations of second order},
   series={Classics in Mathematics},
   publisher={Springer-Verlag, Berlin},
   date={2001},
}
\bib{GutWhe91}{article}{
author={Guti\'{e}rez, C.E.},
author={Wheeden, R. L.},
title={Harnack's inequality for degenerate parabolic equations},
journal={Comm. Partial Differential Equations},
volume={16},
pages = {745--770},
year = {1991},
}


\bib{Haj96}{article}{
   author={Haj\l asz, P.},
   title={Sobolev spaces on an arbitrary metric space},
   journal={Potential Anal.},
   volume={5},
   date={1996},
   pages={403--415},
}

 \bib{HYDER}{article}{
 author={Hyder, A.},
 author={Segatti, A.},
 author={Sire, Y.},
 author={Wang, C.},
 title={Partial regularity of the heat flow of half-harmonic maps and applications to harmonic maps with free boundary},
 journal={Comm. Partial Differential Equations},
 pages={1845--1882},
 date={2022},
 volume={47}

}
	
\bib{JeoVit23}{article}{
      author={Jeon, S.},
      author={Vita, S.},
      title={Higher order boundary Harnack principles in Dini type domains}, 
      year={2023},
      status={	\arxiv{2305.05535}}
}

\bib{kil}{article}{
author = {Kilpeläinen, T.},
journal = {Ann. Acad. Sci. Fenn. Math.},
pages = {95--113},
title = {Weighted Sobolev spaces and capacity.},
url = {http://eudml.org/doc/232936},
volume = {19},
year = {1994},
}

\bib{LSU}{book}{
   author={Lady\v{z}enskaja, O. A.},
   author={Solonnikov, V. A.},
   author={Ural'ceva, N. N.},
   title={Linear and quasilinear equations of parabolic type},
   publisher={American Mathematical Society},
   volume={23},
   date={1968}, 
}

\bib{Lie96}{book}{
   author={Lieberman, G. M.},
   title={Second order parabolic differential equations},
   publisher={World Scientific Publishing Co., Inc., River Edge, NJ},
   date={1996},
}

\bib{lions}{book}{
   author={Lions, J. L.},
   author={Magenes, E.},
   title={Non-homogeneous boundary value problems and applications. Vol. I},
   series={Die Grundlehren der mathematischen Wissenschaften, Band 181},
   publisher={Springer-Verlag, New York-Heidelberg},
   date={1972},
}

\bib{Maz91}{article}{

author={Mazzeo, R.},
   title={Elliptic theory of differential edge operators I},
   journal={Comm. Partial Differential Equations},
   volume={16},
   date={1991},
   pages={1615--1664},
   issn={...},
}

\bib{MazVer14}{article}{

author={Mazzeo, R.},
author={Vertman, B.},
   title={Elliptic theory of differential edge operators, II: boundary value problems},
   journal={Indiana Univ. Math. J.},
   volume={63},
   date={2014},
   pages={1911--1955},
   issn={...},
}

\bib{moser64}{article}{

author={Moser, J.},
   title={A harnack inequality for parabolic differential equations},
   journal={Comm. Pure Appl. Math.},
   volume={17},
   date={1964},
   pages={101--134},
   issn={0944-2669},
}

\bib{NysSan16}{article}{
author = {Nyström, K.},
author={Sande, O.},
year = {2016},
pages = {29--37},
title = {Extension Properties and Boundary Estimates for a Fractional Heat Operator},
volume = {140},
journal = {Nonlinear Anal.},
}

\bib{Simon87}{article}{
   author={Simon, J.},
   title={Compact sets in the space $L^p(0,T;B)$},
   journal={Ann. Mat. Pura Appl.},
   volume={146},
   date={1987},
   pages={65--96},
   issn={},
   review={},
   doi={},
}

\bib{simon}{article}{
   author={Simon, L.},
   title={Schauder estimates by scaling},
   journal={Calc. Var. Partial Differential Equations},
   volume={5},
   date={1997},
   pages={391--407},
   issn={0944-2669},
}

\bib{SirTerVit21a}{article}{
   author={Sire, Y.},
   author={Terracini, S.},
   author={Vita, S.},
   title={Liouville type theorems and regularity of solutions to degenerate
   or singular problems part I: even solutions},
   journal={Comm. Partial Differential Equations},
   volume={46},
   date={2021},
   pages={310--361},
}

 \bib{SirTerVit21b}{article}{
   author={Sire, Y.},
   author={Terracini, S.},
   author={Vita, S.},
   title={Liouville type theorems and regularity of solutions to degenerate
   or singular problems part II: odd solutions},
   journal={Math. Eng.},
   volume={3},
   date={2021},
   pages={1--50},
}

\bib{StiTor17}{article}{
author = {Stinga, P. R.}, 
author = {Torrea, J. L.},
title = {Regularity Theory and Extension Problem for Fractional Nonlocal Parabolic Equations and the Master Equation},
journal = {SIAM J. Math. Anal.},
volume = {49},
pages = {3893--3924},
year = {2017},
}

\bib{TerTorVit22}{article}{
   author={Terracini, S.},
   author={Tortone, G.},
   author={Vita, S.},
   title={Higher order boundary Harnack principle via degenerate equations},
   date={2022},
   status={	\arxiv{2301.00227}},
 }

 \bib{vasseur}{article}{
   author={Vasseur, A.},
   title={The De Giorgi method for elliptic and parabolic equations and some applications, Part 4},
   journal={Morningside Lect. Math., Int. Press, Somerville, MA},
   volume={4},
   date={2016},
   pages={195--222},  
 }

\end{biblist}
\end{bibdiv}
\end{document}